\documentclass[11pt]{amsart}
\usepackage{amsmath, amsthm, amssymb}
\usepackage{epsfig}
\usepackage{rawfonts}
\usepackage{enumerate}
\usepackage{graphics}
\usepackage{tikz}
\usepackage{booktabs}

\usepackage{xspace}
     \usepackage{graphicx}

\sloppy

\newcommand{\agrave}{\`a\ }

\newcommand{\be}{\mathbf{e}}
\newcommand{\Axes}{\textup{Axes}}
\newcommand{\N}{\mathbb{N}}
\newcommand{\kthick}[1]{\textup{T}_{#1}}
\newcommand{\genwilf}{the Generalized Wilf Conjecture}

\theoremstyle{plain}

\theoremstyle{theorem}

\newtheorem{defn}{Definition}[section]
\newtheorem{prop}[defn]{Proposition}
\newtheorem{thm}[defn]{Theorem}
\newtheorem{lemma}[defn]{Lemma}
\newtheorem{conj}[defn]{Conjecture}
\newtheorem{coro}[defn]{Corollary}
\newtheorem{exa}[defn]{Example}
\newtheorem{rmk}[defn]{Remark}

\newtheorem*{thm2}{Generalized Wilf Conjecture}
\theoremstyle{remark}
\newtheorem{rem}[defn]{Remark}
\textwidth=13cm \textheight=20cm \topmargin=1cm \oddsidemargin=2cm
\evensidemargin=2cm

\keywords{Numerical semigroup, Wilf's conjecture, monoid, Frobenius semigroup, irreducible semigroup, monomial ideal}

\subjclass{20M14, 06F05, 11D07}

\thanks{
The research that led to the present paper was partially supported by a grant of the  group GNSAGA of INdAM.  DiPasquale, Peterson, and Flores would like  to thank the University of Messina for its financial support. The research of DiPasquale, Peterson, and Flores was also supported in part by NSF grant DMS-1830676.
}

\begin{document}

\title[A generalization of Wilf's conjecture for GNS ]{A generalization of Wilf's conjecture for Generalized Numerical Semigroups}

\author[C. Cisto, M. DiPasquale, G. Failla, Z. Flores, C. Peterson R. Utano]{Carmelo Cisto, Michael DiPasquale, Gioia Failla, Zachary Flores, Chris Peterson, Rosanna Utano}

\address{Universit\agrave di Messina, Dipartimento di Matematica e Informatica\\
Viale Ferdinando Stagno D'Alcontres 31\\
98166 Messina, Italy}
\email{carmelo.cisto@unime.it}

\address{Colorado State University\\
Department of mathematics, Fort Collins, CO 80523 USA}
\email{michael.dipasquale@colostate.edu}

\address{Universit\agrave Mediterranea di Reggio Calabria, DIIES\\
Via Graziella, Feo di Vito,  Reggio Calabria, Italy}
\email{gioia.failla@unirc.it}

\address{Colorado State University\\
Department of mathematics, Fort Collins, CO 80523 USA}
\email{flores@math.colostate.edu}

\address{Colorado State University\\
Department of mathematics, Fort Collins, CO 80523 USA}
\email{peterson@math.colostate.edu}

\address{Universit\agrave di Messina, Dipartimento di Matematica e Informatica\\
Viale Ferdinando Stagno D'Alcontres 31\\
98166 Messina, Italy}
\email{rosanna.utano@unime.it}

\begin{abstract}

A numerical semigroup is a submonoid of $\mathbb N$ with finite complement in $\mathbb N$.  A generalized numerical semigroup is a submonoid of $\mathbb{N}^{d}$ with finite complement in $\mathbb{N}^{d}$.  In the context of numerical semigroups, Wilf's conjecture is a long standing open problem whose study has led to new mathematics and new ways of thinking about monoids. A natural extension of Wilf's conjecture, to the class of $\mathcal C$-semigroups, was proposed by Garc\'ia-Garc\'ia, Mar\'in-Arag\'on, and Vigneron-Tenorio. In this paper, we propose a different generalization of Wilf's conjecture, to the setting of generalized numerical semigroups, and prove the conjecture for several large families including the irreducible, symmetric, and monomial case.  We also discuss the relationship of our conjecture to the extension proposed by Garc\'ia-Garc\'ia, Mar\'in-Arag\'on, and Vigneron-Tenorio.
\end{abstract}


\maketitle

\section*{Introduction}
Let $S$ be a submonoid of $\mathbb{N}$. The \emph{hole set} of $S$ is defined as $H(S)=\mathbb{N}\setminus S$. If $H(S)$ is a finite set then $S$ is called a \emph{numerical semigroup}. Every numerical semigroup $S$ admits a unique, minimal, finite set of generators $G(S)$. Thus, there is a unique finite set $G(S)$ such that every element of $S$ is an $\mathbb N$-linear combination of elements in $G(S)$ while no proper subset of $G(S)$ has the same property. Well-known invariants of a numerical semigroup $S$ are $e(S) = |G(S)|$, $F(S)=\max\{k \mid k\in H(S)\}$, and $n(S)=|\{s\in S\mid s<F(S)\}|$. For relationships between these and other invariants of numerical semigroups see \cite{Garcia-Book}. An intriguing matter of study concerns the conjectured inequality $e(S)n(S)\geq F(S)+1$. This inequality is known as \emph{Wilf's conjecture} because of its first appearance in \cite{W.}. While Wilf's conjecture has been proved for several classes of numerical semigroups (see for instance \cite{D.M., Eliahou, K.N., Sammartano}), it is still open in general.  The survey~\cite{wilfSurvey} is a good reference for the state of the art on this long standing open problem.

A monoid $S\subseteq \mathbb{N}^{d}$ is called a \emph{generalized numerical semigroup} (GNS) if the hole set $H(S)=\mathbb{N}^{d}\setminus S$ is finite. As in the case where $d=1$, a generalized numerical semigroup, $S$, has a unique, minimal, finite set of generators, $G(S)$ and we let $e(S)=|G(S)|$. Additional properties and features of generalized numerical semigroups are provided in \cite{Analele,Work1, Prof}. A problem posed in \cite{Prof} was to formulate extensions of Wilf's conjecture to the setting of generalized numerical semigroups. A first possible extension was given in \cite{G.G.2016} for a larger class of semigroups called $\mathcal{C}$-semigroups. Their extension is quite natural and has the additional feature of depending on a monomial order.  We propose an alternate generalization of Wilf's conjecture to to the setting of generalized numerical semigroups.  Suppose $\textbf{x}, \textbf{y} \in \mathbb N^d$ with $\textbf{x}= (x^{(1)},\ldots,x^{(d)})$ and $\textbf{y}=(y^{(1)},\ldots,y^{(d)})$. There is a natural partial order, $\leq$ on $\mathbb N^d$, by setting $\textbf{x}\leq \textbf{y}$ if and only if $x^{(i)}\leq y^{(i)}$ for all $i=1,\ldots,d$.   Using this partial order, define $n(S)=|\{\textbf{x}\in S\ |\ \textbf{x}\leq \textbf{h}\ {\rm for\  some}\  \textbf{h}\in H(S)\}|$ and   $c(S)=|\{\textbf{x}\in \mathbb N^d\ |\ \textbf{x}\leq \textbf{h} \ {\rm for\  some}\   \textbf{h}\in H(S)\}|$. With this notation in place, we propose the following generalization of Wilf's conjecture to the setting of generalized numerical semigroups: 
\begin{thm2} If $S\subset \mathbb N^d$ is a GNS then $e(S)\hskip 1pt n(S)\geq d\hskip 1pt c(S)$.  \end{thm2}

\noindent This paper is concerned with motivating the above conjecture, proving it for several large classes of generalized numerical semigroups, and contrasting it with the extension of Wilf's conjecture proposed in \cite{G.G.2016}.  Throughout this paper we will refer to the above conjecture as \textit{the Generalized Wilf Conjecture}.


In Section 1 we recall the most important properties about {\it irreducible} generalized numerical semigroups, a class studied in \cite{Work1}.  Furthermore, we explain why \genwilf{}  can be considered as a generalization of Wilf's conjecture.  In Section 2 we introduce an operation called \textit{thickening} and in Section 3 we use this operation to prove that irreducible generalized numerical semigroups satisfy the proposed conjecture.  In Section 4 we consider the class of \emph{monomial semigroups}, i.e. semigroups satisfying the statistic $n(S)=1$, and we show that elements in this class also satisfy the Generalized Wilf Conjecture.  In Section 5 we compare our proposed extension of Wilf's conjecture with the one given in \cite{G.G.2016}. The last two sections are devoted to providing additional computational evidence for \genwilf{} and concluding remarks.\\

\section{Frobenius, irreducible, and symmetric semigroups and \genwilf{}}


In this section, we discuss \textit{Frobenius} and \textit{irreducible} generalized numerical semigroups as introduced in~\cite{Work1}.  We motivate our statement of \genwilf{} by considering how Wilf's conjecture can be extended to Frobenius semigroups, and we prove \genwilf{} for a certain class of Frobenius semigroups.  We also fix the basic notation and vocabulary that will be used throughout the paper.  

Throughout the paper, $S$ refers to a generalized numerical semigroup.  The set of {\it pseudo-Frobenius elements} of $S$ is $PF(S)=\{\textbf{h}\in H(S)\mid \textbf{h}+S \subset S\}$ while the set of {\it special gaps} is $EH(S) =\{\textbf{h}\in PF(S)\mid 2\textbf{h}\in S\}$. $S$ is called {\it irreducible} if it is not possible to express $S$ as the intersection of two larger generalized numerical semigroups. We have the following theorems.

\begin{thm}[\cite{Work1}, Theorem 2.9] Let $S\subseteq \mathbb{N}^{d}$ be a GNS. Then the following statements are equivalent:
\begin{enumerate}
\item $|PF(S)|=1$.
\item $PF(S)=\{\textbf{f}\}$ and $\textbf{f}$ has at least one odd component.
\item There exists an $\textbf{f}\in H(S)$ such that $\textbf{f}-\textbf{h}\in S$ for all $\textbf{h}\in H(S)$.
\end{enumerate}
\label{sym}\end{thm}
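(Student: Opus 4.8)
The plan is to recast the set $PF(S)$ poset-theoretically and then exploit the finiteness of $H(S)$ together with a halving trick. For $\textbf{a},\textbf{b}\in\mathbb N^d$, define $\textbf{a}\preceq_S\textbf{b}$ to mean $\textbf{b}-\textbf{a}\in S$. Since $S\subseteq\mathbb N^d$ is a submonoid, this is a genuine partial order: reflexivity comes from $\textbf{0}\in S$, transitivity from closure under addition, and antisymmetry from $S\cap(-S)=\{\textbf{0}\}$ (if $\textbf{s}$ and $-\textbf{s}$ both lie in $S\subseteq\mathbb N^d$ then $\textbf{s}=\textbf{0}$). The first thing I would record is the dictionary $PF(S)=\{\text{maximal elements of }(H(S),\preceq_S)\}$: the condition $\textbf{h}+S\subseteq S$ fails exactly when some $\textbf{s}\in S\setminus\{\textbf{0}\}$ has $\textbf{h}+\textbf{s}\in H(S)$, i.e.\ exactly when some gap sits strictly above $\textbf{h}$. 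With this in hand I would prove the cycle $(1)\Rightarrow(3)\Rightarrow(2)\Rightarrow(1)$.

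For $(1)\Rightarrow(3)$, the key point is that in the \emph{finite} poset $H(S)$ a unique maximal element is automatically a global maximum: starting from any gap $\textbf{x}$ and ascending along a strictly $\preceq_S$-increasing chain, the chain must terminate (distinct elements of the finite set $H(S)$), and its top is a maximal element, hence equals the unique $\textbf{f}$ with $PF(S)=\{\textbf{f}\}$. Thus $\textbf{x}\preceq_S\textbf{f}$, i.e.\ $\textbf{f}-\textbf{x}\in S$ for every $\textbf{x}\in H(S)$, which is precisely $(3)$.

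For $(3)\Rightarrow(2)$, I would first verify that the witness $\textbf{f}$ from $(3)$ lies in $PF(S)$ and is its only element: if $\textbf{f}+\textbf{s}\in H(S)$ for some $\textbf{s}\in S\setminus\{\textbf{0}\}$, then $(3)$ applied to the gap $\textbf{f}+\textbf{s}$ would give $-\textbf{s}\in S$, impossible; so $\textbf{f}\in PF(S)$, and being a global maximum it is the unique maximal element, whence $PF(S)=\{\textbf{f}\}$. The decisive step is the odd-component claim: suppose every component of $\textbf{f}$ is even, so $\textbf{g}=\tfrac12\textbf{f}\in\mathbb N^d$. Since $\textbf{f}=\textbf{g}+\textbf{g}\notin S$, we must have $\textbf{g}\notin S$, i.e.\ $\textbf{g}\in H(S)$; but then $(3)$ with $\textbf{h}=\textbf{g}$ yields $\textbf{f}-\textbf{g}=\textbf{g}\in S$, a contradiction. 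Hence $\textbf{f}$ has an odd component, giving $(2)$. Finally $(2)\Rightarrow(1)$ is immediate, since $(2)$ already asserts $PF(S)=\{\textbf{f}\}$.

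The genuinely delicate ingredients are the two noted above: the promotion of ``unique maximal'' to ``global maximum,'' which relies entirely on the finiteness of $H(S)$, and the halving argument, which is exactly where the odd-component hypothesis originates and which is the $\mathbb N^d$-analogue of the classical fact that a symmetric numerical semigroup has odd Frobenius number. I expect the main obstacle to be purely one of bookkeeping, namely keeping the semigroup order $\preceq_S$ carefully distinct from the componentwise order $\leq$ on $\mathbb N^d$: the former refines the latter, but conflating them would invalidate both the maximality dictionary and the ascending-chain argument.
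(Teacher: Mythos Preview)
Your proof is correct. Note, however, that the paper does not itself prove this theorem: it is quoted verbatim from~\cite{Work1} (Theorem~2.9) and no proof appears in the present paper, so there is no in-paper argument to compare yours against. Your approach---recasting $PF(S)$ as the set of $\preceq_S$-maximal gaps, using finiteness to promote ``unique maximal'' to ``global maximum,'' and deriving the odd-component condition by the halving contradiction---is the standard one and is essentially how the result is proved in the cited reference. The bookkeeping caveat you raise about distinguishing $\preceq_S$ from the coordinatewise order $\le$ is well placed, and you have kept the two straight throughout.
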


\begin{thm}[\cite{Work1}, Theorem 2.10] Let $S\subseteq \mathbb{N}^{d}$ be a GNS. Then the following statements are equivalent:
\begin{enumerate}
\item $PF(S)=\{\textbf{f}, \frac{\textbf{f}}{2}\}$.
\item There exists an $\textbf{f}\in H(S)$, with even components, such that $\textbf{f}-\textbf{h}\in S$ for all $\textbf{h}\in H(S) \smallsetminus \{\frac{\textbf{f}}{2}\}$.
\end{enumerate}
\label{p-sym}\end{thm}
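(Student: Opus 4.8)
The plan is to organize both implications around a single observation: the pseudo-Frobenius elements of $S$ are exactly the maximal elements of the hole set $H(S)$ with respect to the partial order $\leq_S$ defined by $\textbf{a}\leq_S\textbf{b}$ if and only if $\textbf{b}-\textbf{a}\in S$. First I would check that $\leq_S$ is a genuine partial order: reflexivity and transitivity come from $\textbf{0}\in S$ and closure under addition, while antisymmetry holds because $S\subseteq\mathbb{N}^d$ forces $S\cap(-S)=\{\textbf{0}\}$. The characterization is then immediate, recalling that $\textbf{h}\in PF(S)$ means $\textbf{h}+\textbf{s}\in S$ for every $\textbf{s}\in S\setminus\{\textbf{0}\}$: if $\textbf{h}$ is $\leq_S$-maximal in $H(S)$ then no $\textbf{h}+\textbf{s}$ can be a strictly larger hole, so $\textbf{h}\in PF(S)$, and conversely a pseudo-Frobenius element admits no strictly larger hole above it. Because $H(S)$ is finite, every hole lies $\leq_S$ some element of $PF(S)$; this finiteness fact is what ultimately drives the harder direction.

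For $(2)\Rightarrow(1)$, I first note $\tfrac{\textbf{f}}{2}\in H(S)$, since $\tfrac{\textbf{f}}{2}\in S$ would give $\textbf{f}=\tfrac{\textbf{f}}{2}+\tfrac{\textbf{f}}{2}\in S$, contradicting $\textbf{f}\in H(S)$. I would then verify that both $\textbf{f}$ and $\tfrac{\textbf{f}}{2}$ are $\leq_S$-maximal in $H(S)$. For $\textbf{f}$: a strictly larger hole $\textbf{h}'$ gives $\textbf{h}'-\textbf{f}\in S\setminus\{\textbf{0}\}$, while the hypothesis gives $\textbf{f}-\textbf{h}'\in S$ (the case $\textbf{h}'=\tfrac{\textbf{f}}{2}$ is impossible since $-\tfrac{\textbf{f}}{2}\notin S$), so antisymmetry forces $\textbf{f}=\textbf{h}'$, a contradiction. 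For $\tfrac{\textbf{f}}{2}$: a strictly larger hole $\textbf{h}'$ satisfies $\textbf{h}'\neq\tfrac{\textbf{f}}{2}$, hence $\textbf{f}-\textbf{h}'\in S$; adding $\textbf{h}'-\tfrac{\textbf{f}}{2}\in S$ yields $\tfrac{\textbf{f}}{2}\in S$, a contradiction. Finally, any $\textbf{g}\in PF(S)$ other than $\tfrac{\textbf{f}}{2}$ satisfies $\textbf{f}-\textbf{g}\in S$, i.e. $\textbf{g}\leq_S\textbf{f}$, and $\leq_S$-maximality of $\textbf{g}$ forces $\textbf{g}=\textbf{f}$. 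Thus $PF(S)=\{\textbf{f},\tfrac{\textbf{f}}{2}\}$.

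For $(1)\Rightarrow(2)$, the requirement that $\textbf{f}$ have even components is implicit in statement (1), since $\tfrac{\textbf{f}}{2}\in PF(S)\subseteq\mathbb{N}^d$, and likewise $\tfrac{\textbf{f}}{2}\in H(S)$. The real content is to show $\textbf{f}-\textbf{h}\in S$ for every $\textbf{h}\in H(S)\setminus\{\tfrac{\textbf{f}}{2}\}$. Fixing such an $\textbf{h}$, finiteness places it $\leq_S$ one of the two maximal holes $\textbf{f}$ or $\tfrac{\textbf{f}}{2}$. If $\textbf{h}\leq_S\textbf{f}$ we are done. Otherwise $\textbf{h}\leq_S\tfrac{\textbf{f}}{2}$ with $\textbf{h}\neq\tfrac{\textbf{f}}{2}$, so $\tfrac{\textbf{f}}{2}-\textbf{h}\in S\setminus\{\textbf{0}\}$, and I would write
\[
\textbf{f}-\textbf{h}=\tfrac{\textbf{f}}{2}+\bigl(\tfrac{\textbf{f}}{2}-\textbf{h}\bigr),
\]
whence membership in $S$ follows from the pseudo-Frobenius property of $\tfrac{\textbf{f}}{2}$ applied to the nonzero element $\tfrac{\textbf{f}}{2}-\textbf{h}$.

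I expect this last step to be the main obstacle, since it is exactly where one must upgrade ``dominated by $\tfrac{\textbf{f}}{2}$'' to ``dominated by $\textbf{f}$''; the decomposition above is the key trick and relies essentially on $\tfrac{\textbf{f}}{2}$ itself being a pseudo-Frobenius element. Everything else reduces to bookkeeping with the order $\leq_S$ and the finiteness of $H(S)$.
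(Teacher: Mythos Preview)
Your argument is correct. The partial order $\leq_S$ and the identification of $PF(S)$ with the $\leq_S$-maximal holes is the standard device here, and both directions go through as you describe; in particular the decomposition $\textbf{f}-\textbf{h}=\tfrac{\textbf{f}}{2}+(\tfrac{\textbf{f}}{2}-\textbf{h})$ in the $(1)\Rightarrow(2)$ direction is exactly the right move.

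There is nothing to compare against in the present paper: the theorem is quoted from \cite{Work1} without proof, so the paper offers no argument of its own. Your proof is self-contained and matches the expected line of reasoning for this result. One minor remark: you might make explicit that $\textbf{f}\neq\textbf{0}$ (hence $\textbf{f}\neq\tfrac{\textbf{f}}{2}$) follows from $\textbf{f}\in H(S)$ and $\textbf{0}\in S$, so that $PF(S)$ in statement (1) genuinely has two elements; you use this implicitly when ruling out $\textbf{h}'=\tfrac{\textbf{f}}{2}$ in the maximality of $\textbf{f}$, but it is worth a sentence.
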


\begin{defn} \rm $S$ is \emph{symmetric} if it satisfies the conditions of Theorem~\ref{sym}. $S$ is \emph{pseudo-symmetric} if it satisfies the conditions of Theorem~\ref{p-sym}. 
 In both cases $EH(S)=\{\textbf{f}\}$.  \end{defn}

\begin{defn} \rm If there exists a unique maximal element $\textbf{f}\in H(S)$, with respect to the natural partial order in $\mathbb{N}^d$, then we say that $S$ is {\it Frobenius} with  \emph{Frobenius element} $\textbf{f}$. We say that $(S,\textbf{f})$ is a {\it Frobenius GNS}. \end{defn}

\begin{thm}[\cite{Work1}]Let $S\subseteq \mathbb{N}^{d}$.
	\begin{enumerate}
	\item [1)] $S$ is irreducible if and only if $|EH(S)|=1$.
		\item [2)]  $S$ is symmetric if and only if there exists $\textbf{f}\in H(S)$ with $2|H(S)|=(f^{(1)}+1)(f^{(2)}+1)\cdots (f^{(d)}+1)$. \label{rel}
		\item [3)] $S$ is pseudo-symmetric if and only if there exists $\textbf{f}\in H(S)$ with $2|H(S)|-1=(f^{(1)}+1)(f^{(2)}+1)\cdots (f^{(d)}+1)$. \label{p-rel}
	\end{enumerate}
In each case, $(S,\textbf{f})$ is a Frobenius GNS  where in 1), we assume $EH(S) =\{\textbf{f}\}$.
\label{irr_formula}\end{thm}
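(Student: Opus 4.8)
The plan is to base all three parts on two elementary tools, assuming throughout that $S\neq\mathbb{N}^d$ so that $H(S)\neq\emptyset$. The first is an \emph{adjunction criterion}: for $\mathbf{h}\in H(S)$, the set $S\cup\{\mathbf{h}\}$ is a submonoid with finite complement (hence a GNS) strictly containing $S$ if and only if $\mathbf{h}\in EH(S)$. Indeed, closure under addition requires $\mathbf{h}+\mathbf{s}\in S$ for every nonzero $\mathbf{s}\in S$ (so $\mathbf{h}\in PF(S)$) together with $2\mathbf{h}\in S$, which is exactly the definition of a special gap. The second is the observation that every maximal element $\mathbf{h}$ of the finite nonempty poset $H(S)$ lies in $EH(S)$: for nonzero $\mathbf{s}\in S$ we have $\mathbf{h}+\mathbf{s}>\mathbf{h}$, so $\mathbf{h}+\mathbf{s}\notin H(S)$ by maximality, giving $\mathbf{h}\in PF(S)$, and similarly $2\mathbf{h}>\mathbf{h}$ gives $2\mathbf{h}\in S$; in particular $EH(S)\neq\emptyset$.

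For part (1), if $|EH(S)|\geq 2$ I pick distinct $\mathbf{h}_1,\mathbf{h}_2\in EH(S)$; then $S_i\lik S\cup\{\mathbf{h}_i\}$ are GNS strictly containing $S$ with $S_1\cap S_2=S$, so $S$ is reducible. Conversely, if $S=S_1\cap S_2$ with each $S_i\supsetneq S$, I choose $\mathbf{h}_i$ maximal in the nonempty set $S_i\cap H(S)=S_i\setminus S$; running the maximality argument inside $S_i$ (using $\mathbf{h}_i+\mathbf{s},2\mathbf{h}_i\in S_i$ for $\mathbf{s}\in S$) shows $\mathbf{h}_i\in EH(S)$, while $\mathbf{h}_1\in S_1\setminus S_2$ and $\mathbf{h}_2\in S_2$ force $\mathbf{h}_1\neq\mathbf{h}_2$, so $|EH(S)|\geq 2$. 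Hence $S$ is irreducible iff $|EH(S)|=1$. When $EH(S)=\{\mathbf{f}\}$, the nonempty set of maximal elements of $H(S)$ is contained in $EH(S)=\{\mathbf{f}\}$, so $\mathbf{f}$ is the unique maximal element of the finite poset $H(S)$ and therefore its maximum; thus $(S,\mathbf{f})$ is Frobenius.

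For parts (2) and (3) the engine is the involution $\iota(\mathbf{x})=\mathbf{f}-\mathbf{x}$ of the box $B=\{\mathbf{x}\in\mathbb{N}^d:\mathbf{0}\leq\mathbf{x}\leq\mathbf{f}\}$, where $|B|=\prod_{i}(f^{(i)}+1)$. The key uniform fact is that $\iota(B\cap S)\subseteq H(S)$ for any $\mathbf{f}\in H(S)$: if $\mathbf{s}\in B\cap S$ then $\mathbf{s}\neq\mathbf{f}$, and $\mathbf{f}-\mathbf{s}\in S$ would give $\mathbf{f}=\mathbf{s}+(\mathbf{f}-\mathbf{s})\in S$, a contradiction. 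In the symmetric case, Theorem~\ref{sym} gives $\iota(H(S))\subseteq S$, and symmetry forces $EH(S)=\{\mathbf{f}\}$, hence $H(S)\subseteq B$ by part (1); the two inclusions make $\iota$ a bijection between $H(S)$ and $B\cap S$, so $|B|=2|H(S)|$. For the converse one is given only $\mathbf{f}\in H(S)$ with $2|H(S)|=|B|$: the inclusion above yields $|B\cap S|\leq|H(S)\cap B|$, and combining this with $|B\cap S|+|H(S)\cap B|=|B|=2|H(S)|$ and $|H(S)\cap B|\leq|H(S)|$ squeezes out $|H(S)\cap B|=|H(S)|$ (so $H(S)\subseteq B$ and $(S,\mathbf{f})$ is Frobenius) and $|B\cap S|=|H(S)|$; equality upgrades the injection $\iota\colon B\cap S\to H(S)$ to a bijection, whence $\mathbf{f}-\mathbf{h}=\iota(\mathbf{h})\in S$ for every $\mathbf{h}\in H(S)$ and $S$ is symmetric by Theorem~\ref{sym}.

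Part (3) is the same bookkeeping performed around the unique fixed point $\mathbf{f}/2$ of $\iota$, which exists exactly when $\mathbf{f}$ has even components, equivalently when $|B|$ is odd. Pseudo-symmetry places $\mathbf{f}/2\in H(S)$ and gives $\iota\bigl(H(S)\setminus\{\mathbf{f}/2\}\bigr)\subseteq S$; together with $\iota(B\cap S)\subseteq H(S)\setminus\{\mathbf{f}/2\}$ this yields $|B\cap S|=|H(S)|-1$ and hence $|B|=2|H(S)|-1$ (using $H(S)\subseteq B$, as pseudo-symmetric implies Frobenius). For the converse, $2|H(S)|-1=|B|$ forces $|B|$ odd, so $\mathbf{f}$ is even and $\mathbf{f}/2$ is the fixed point, necessarily in $H(S)$ since a fixed point in $S$ would contradict $\iota(B\cap S)\subseteq H(S)$. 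Decomposing $B$ into $\{\mathbf{f}/2\}$ and two-element $\iota$-orbits, the inclusion $\iota(B\cap S)\subseteq H(S)$ forbids any orbit with both elements in $S$; writing $p$ for the number of orbits split between $S$ and $H(S)$ and $q$ for those contained in $H(S)$, one has $|B\cap S|=p$ and $|H(S)\cap B|=p+2q+1$. The same squeeze as in part (2), now using that $|B|$ is odd, gives $|H(S)\cap B|=|H(S)|$ and $|B\cap S|=|H(S)|-1$, forcing $q=0$. Thus every gap other than $\mathbf{f}/2$ is paired by $\iota$ with an element of $S$, i.e. $\mathbf{f}-\mathbf{h}\in S$ for all $\mathbf{h}\in H(S)\setminus\{\mathbf{f}/2\}$, which is pseudo-symmetry by Theorem~\ref{p-sym}.

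The step I expect to be the main obstacle is the converse of (2) and (3): one is handed a single numerical equality and must recover the full reflection symmetry of $H(S)$ — in particular that $\mathbf{f}$ is genuinely the Frobenius element, that the involution pairs each gap (except the central one) with a non-gap, and that no surplus gap survives. This hinges on the delicate parity/counting squeeze inside $B$, where in the pseudo-symmetric case the oddness of $|B|$ is precisely what eliminates the orbits contained in $H(S)$; correctly locating the fixed point $\mathbf{f}/2$ in $H(S)$ rather than in $S$ is the subtle point.
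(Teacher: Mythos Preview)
The paper does not prove this theorem: it is quoted verbatim from~\cite{Work1} as background, with no argument given in the present paper. So there is no ``paper's own proof'' to compare against, and what matters is whether your argument stands on its own.

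It does. Your adjunction criterion and the observation that maximal holes lie in $EH(S)$ are exactly the right primitives, and your handling of part~(1) is the standard one. For the converse in~(1), the point you state tersely deserves to be made explicit: if $\mathbf{h}_i$ is maximal in $S_i\setminus S$ and $\mathbf{s}\in S^*$, then $\mathbf{h}_i+\mathbf{s}\in S_i$ and $\mathbf{h}_i+\mathbf{s}>\mathbf{h}_i$, so maximality in $S_i\setminus S$ forces $\mathbf{h}_i+\mathbf{s}\in S$; likewise $2\mathbf{h}_i\in S$. This is what you mean by ``running the maximality argument inside $S_i$,'' and it is correct.

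For parts~(2) and~(3) the involution $\iota$ on the box $B$ together with the single inclusion $\iota(B\cap S)\subseteq H(S)$ does all the work, and your counting squeeze is clean and correct in both directions. Two small points worth making explicit in a write-up: in the forward direction of~(2) you use that symmetry gives $EH(S)=\{\mathbf{f}\}$, which follows because $EH(S)\subseteq PF(S)=\{\mathbf{f}\}$ and $EH(S)\neq\emptyset$; and in the forward direction of~(3) you use that pseudo-symmetric implies Frobenius, which follows because $EH(S)\subseteq PF(S)=\{\mathbf{f},\mathbf{f}/2\}$ while $2(\mathbf{f}/2)=\mathbf{f}\notin S$ excludes $\mathbf{f}/2$ from $EH(S)$. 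Both are implicit in what you wrote. The orbit count in the converse of~(3) is the decisive step and you have it right: $|H(S)\cap B|=p+2q+1\le |H(S)|=p+q+1$ forces $q=0$, which simultaneously yields $H(S)\subseteq B$ and the reflection property.
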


Recall that $S\subset \mathbb N^d$ has a unique finite set of minimal generators (see \cite{Analele}).
We will use the following notation throughout the paper:
\begin{defn}
Given a generalized numerical semigroup $S\subset \mathbb N^d$ we define
\begin{itemize}
\item $G(S)=$ the set of minimal generators of $S$
\item $H(S)=\N^d\setminus S$
\item $C(\textbf{h})=\{\textbf{x}\in \mathbb N^d \mid \textbf{x}\leq \textbf{h}\}$
\item $C(S)=\{\textbf{x}\in \mathbb N^d \mid \textbf{x}\leq \textbf{h} \ {\rm for\  some}\  \textbf{h} \in H(S)\}$
\item $N(\textbf{h})=\{\textbf{x}\in  S \mid \textbf{x}\leq \textbf{h}\}$
\item $N(S)=\{\textbf{x}\in  S \mid \textbf{x}\leq \textbf{h} \ {\rm for\  some}\  \textbf{h} \in H(S)\}$
\item $H(\textbf{h})=\{\textbf{x}\in H(S) \mid \textbf{x}\leq \textbf{h}\}$
\item  $n(S)=|N(S)|, c(S)=|C(S)|$, $g(S)=|H(S)|$, and $e(S)=|G(S)|$
\end{itemize}
\end{defn}

 Consider the map:
$$\Psi_{\textbf{h}}:N(\textbf{h})\rightarrow H(\textbf{h}) \ \ {\rm defined \ by} \ \ \Psi_{\textbf{h}}(\textbf{s}) = \textbf{h}-\textbf{s}.$$
It is elementary to show that $\Psi_{\textbf{h}}$ is well defined and injective. As a consequence, $|N(\textbf{h})|\leq |H(\textbf{h})|\leq |H(S)|$.


\begin{prop} Let $S\subseteq \mathbb{N}^{d}$ be a symmetric GNS with Frobenius element $\textbf{f}$. Then $e(S)n(S)\geq d(f^{(1)}+1)\cdots(f^{(d)}+1)$. \label{w-sym}\end{prop}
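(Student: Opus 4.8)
The plan is to reduce the stated inequality to the single assertion that $e(S)\geq 2d$, and then to prove the latter by a direct count of minimal generators. First I would use that a symmetric GNS is Frobenius (Theorem~\ref{irr_formula}), so $\mathbf{f}$ is the unique maximal element of $H(S)$; consequently $C(S)=C(\mathbf{f})$, $H(S)=H(\mathbf{f})$, and $N(S)=N(\mathbf{f})$. This gives $c(S)=|C(\mathbf{f})|=(f^{(1)}+1)\cdots(f^{(d)}+1)$, and the disjoint decomposition $C(\mathbf{f})=N(\mathbf{f})\sqcup H(\mathbf{f})$ yields $c(S)=n(S)+g(S)$. The symmetry hypothesis, via Theorem~\ref{irr_formula}, gives $2g(S)=(f^{(1)}+1)\cdots(f^{(d)}+1)=c(S)$, so $n(S)=g(S)$ and $(f^{(1)}+1)\cdots(f^{(d)}+1)=2n(S)$. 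Substituting, the claim $e(S)n(S)\geq d(f^{(1)}+1)\cdots(f^{(d)}+1)$ is exactly $e(S)n(S)\geq d\,c(S)\ (=2d\,n(S))$; since $\mathbf{0}\in N(S)$ forces $n(S)\geq 1$, it suffices to prove $e(S)\geq 2d$.

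To bound $e(S)$ I would separate the minimal generators into those lying on a coordinate axis and the rest. For each $i$, the set $S_i=\{t\in\mathbb{N}\mid t\mathbf{e}_i\in S\}$ is a numerical semigroup, and $t\mathbf{e}_i$ is a minimal generator of $S$ precisely when $t$ is a minimal generator of $S_i$ (any nontrivial decomposition of $t\mathbf{e}_i$ in $S$ must take place on the axis). If $\mathbf{e}_i\notin S$ then $S_i\neq\mathbb{N}$ has at least two minimal generators, while if $\mathbf{e}_i\in S$ then $\mathbf{e}_i$ is the only axis-$i$ generator. Writing $k=\#\{i\mid\mathbf{e}_i\in S\}$, the number of axis generators is therefore at least $2(d-k)+k=2d-k$.

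For the off-axis generators I would exploit cofiniteness. Since $S\neq\mathbb{N}^d$, some $\mathbf{e}_{j_0}\notin S$, so $k\leq d-1$. For each $i\neq j_0$ the point $t\mathbf{e}_i+\mathbf{e}_{j_0}$ lies in $S$ for all large $t$ (it fails to be $\leq\mathbf{f}$, hence is not a hole). In a minimal-generator expansion of such a point, the fact that its $j_0$-coordinate equals $1$ forces exactly one generator with $j_0$-coordinate $1$, and every generator used is supported on $\{i,j_0\}$; since $\mathbf{e}_{j_0}\notin S$, that distinguished generator must be $s\mathbf{e}_i+\mathbf{e}_{j_0}$ with $s\geq 1$, i.e.\ a genuinely off-axis generator supported exactly on $\{i,j_0\}$. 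These $d-1$ generators (one for each $i\neq j_0$) have pairwise distinct supports, so they are distinct from one another and from every axis generator. Combining the two counts, $e(S)\geq (2d-k)+(d-1)=3d-k-1\geq 2d$, since $k\leq d-1$.

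The main obstacle is precisely the off-axis count: the axis generators alone only guarantee $2d-k$ generators, falling short of $2d$ by the number $k$ of unit vectors contained in $S$, and one must recover the missing generators elsewhere. The idea that makes this work is to read off forced off-axis generators from the coordinate rays sitting just above a missing axis direction $\mathbf{e}_{j_0}$; this produces $d-1\geq k$ distinct two-support generators and closes the gap. A subsidiary point to verify carefully is the axis correspondence between minimal generators of $S$ lying on axis $i$ and minimal generators of the numerical semigroup $S_i$, which underlies the axis count.
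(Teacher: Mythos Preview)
Your reduction is exactly the paper's: use that a symmetric GNS is Frobenius to get $c(S)=(f^{(1)}+1)\cdots(f^{(d)}+1)$, use the symmetry count $2g(S)=c(S)$ to obtain $n(S)=g(S)$, and conclude that the desired inequality is equivalent to $e(S)\ge 2d$. The paper then simply invokes \cite[Theorem~11]{G.G.2016} for $e(S)\ge 2d$, whereas you supply a direct self-contained proof of that bound via the axis/off-axis generator count; your argument is correct (the key point that any decomposition of an axis element stays on the axis, and that a large element on the ray $t\mathbf{e}_i+\mathbf{e}_{j_0}$ forces a two-support generator, both go through), so the net effect is the same proof made independent of the external reference.
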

\begin{proof}
Since $S$ is Frobenius, we have  $|H(\textbf{f})|=|H(S)|, |N(\textbf{f})|=|N(S)|$, and $|C(\textbf{f})|=|C(S)|$. Since $S$ is symmetric, we have $2|H(S)|=(f^{(1)}+1)(f^{(2)}+1)\cdots (f^{(d)}+1)=|C(\textbf{f})|=|C(S)|=|N(S)|+|H(S)|$. This implies that $n(S)=|N(S)|=|H(S)|$. Since $e(S)\ge 2d$ by~\cite[Theorem~11]{G.G.2016}, we have $e(S)n(S)\geq 2d|H(S)| = d(f^{(1)}+1)\cdots(f^{(d)}+1)$ (by Theorem~\ref{rel}). 
\end{proof}

Let $S$ be a symmetric numerical semigroup with Frobenius number $F(S)$, then by Proposition~\ref{w-sym} with $d=1$, $S$ satisfies $e(S)n(S)\geq F(S)+1$. This inequality is known as Wilf's conjecture and has been shown to be satisfied by several classes of numerical semigroups.  In general, the conjecture is wide open and is one of the long standing open problems in the study of numerical semigroups. The proposition above suggests a straightforward generalization for the Wilf conjecture for Frobenius generalized numerical semigroups.

\begin{conj} (The Generalized Wilf Conjecture for Frobenius GNS) Let $(S,\textbf{f})$ be a Frobenius GNS in $\mathbb{N}^{d}$. Then $e(S)n(S)\geq d(f^{(1)}+1)\cdots(f^{(d)}+1)$. \label{wilf-frof}\end{conj}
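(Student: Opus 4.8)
The plan is to first translate the statement into a purely combinatorial inequality about the box $C(\textbf{f})$, and then to locate where the genuine difficulty lies. Since $(S,\textbf{f})$ is Frobenius, $\textbf{f}$ is the unique maximal element of $H(S)$, so every hole satisfies $\textbf{h}\le\textbf{f}$; hence $H(\textbf{f})=H(S)$, $N(\textbf{f})=N(S)$, and $C(\textbf{f})=C(S)$. In particular $c(S)=|C(\textbf{f})|=(f^{(1)}+1)\cdots(f^{(d)}+1)$, so the claimed bound is exactly \genwilf{} specialized to the Frobenius case. Partitioning the box as $C(S)=N(S)\sqcup H(S)$ (every point below some hole is either in $S$, hence in $N(S)$, or a hole) gives $c(S)=n(S)+g(S)$, and the target becomes the equivalent form
\[
(e(S)-d)\,n(S)\ \ge\ d\,g(S),
\]
which I would work with throughout.

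Next I would isolate the extremal case. The injectivity of $\Psi_{\textbf{f}}$ gives $n(S)\le g(S)$, with equality precisely when $S$ is symmetric; there Proposition~\ref{w-sym} already settles the inequality from the single input $e(S)\ge 2d$ of \cite[Theorem~11]{G.G.2016}. The entire difficulty therefore sits in the \emph{deficient} regime $n(S)<g(S)$: rewriting the target as $e(S)\ge d\bigl(1+g(S)/n(S)\bigr)$ shows that the number of generators must exceed $2d$ by an amount growing with the ratio $g(S)/n(S)$. Thus the whole content is a lower bound on $e(S)$ that forces a semigroup with few small elements (relative to the size of the box) to have many generators.

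To produce such a bound I would attempt to realize $(e(S)-d)\,n(S)\ge d\,g(S)$ as an injection $\{1,\dots,d\}\times H(S)\hookrightarrow \bigl(G(S)\setminus A\bigr)\times N(S)$, where $A$ is the set of $d$ axis generators (the minimal multiples of $e_1,\dots,e_d$ lying in $S$, which exist because $H(S)$ is finite). The $d$ coordinate directions account for the factor $d$ on the right, and setting $A$ aside accounts for the $-d$ on the left. The natural construction climbs from a hole $\textbf{h}$ along the $i$-th axis to its first re-entry into $S$ and records, via the appropriate Apéry-set of the $i$-th axis generator, both a witnessing minimal generator and a residual small element. The \textbf{main obstacle} is establishing injectivity in the deficient regime: the same generator can be reached from many holes along the same axis, and controlling these collisions is exactly the Apéry bookkeeping that makes Wilf's conjecture hard. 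This is no accident, since for $d=1$ a Frobenius GNS is simply a numerical semigroup and the statement is \emph{verbatim} Wilf's conjecture $e(S)n(S)\ge F(S)+1$; hence a complete proof would resolve the classical conjecture, and no elementary argument can be expected to succeed in full generality.

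Accordingly, the realistic plan is to prove the inequality on large families, mirroring the classical theory. I would (i) dispatch the symmetric and pseudo-symmetric cases directly from the ratio $n(S)/g(S)$ as in Proposition~\ref{w-sym} and Theorem~\ref{sym}; (ii) exploit the thickening operation of Section~2 to reduce a Frobenius GNS to lower, better-understood configurations and to identify families closed under the inequality; and (iii) transport one-dimensional families known to satisfy Wilf to $\mathbb{N}^d$ by slicing the box $C(\textbf{f})$ along a coordinate axis and inducting on $d$. In every case the recurring technical point is the same lower bound on $e(S)$, so I expect thickening and axis-slicing to be the two levers that make the deficient regime tractable on specific families, even while the statement for arbitrary Frobenius GNS remains, like Wilf's conjecture itself, out of reach.
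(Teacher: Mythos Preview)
The statement in question is a \emph{conjecture} in the paper, not a theorem: the paper does not prove it, and as you correctly point out, the case $d=1$ is verbatim the classical Wilf conjecture, so a complete proof is not to be expected. Your proposal is therefore not a proof but an accurate diagnosis of where the obstruction lies, followed by a plan to settle special families (symmetric and pseudo-symmetric semigroups, reductions via thickening); this is precisely the strategy the paper itself pursues in the surrounding sections, so your outline and the paper's partial results are aligned.
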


  Observe that, if $S\subseteq \mathbb{N}^{d}$ is a GNS and $\textbf{h}\in H(S)$ then $$|C(\textbf{h})|=|N(\textbf{h})|+ |H(\textbf{h})|=(h^{(1)}+1)(h^{(2)}+1)\cdots (h^{(d)}+1).$$

The key idea in the previous conjecture is to substitute the value $F(S)+1$, for numerical semigroups, with the cardinality of the set $C(\textbf{f})$ in the case of Frobenius generalized numerical semigroups $(S,\textbf{f})$, for which there exists a unique Frobenius element. However there are more general situations and $F(S)+1$ may be replaced in a different way.
Note that if the GNS is a $(S,\textbf{f})$ Frobenius GNS then $$|C(S)|=|C(\textbf{f})|=(f^{(1)}+1)\cdots (f^{(d)}+1).$$  $|C(S)|$ is known as the {\it conductor} if $S$ is a  numerical semigroup.

\begin{lemma} Let $S\subseteq \mathbb{N}^{d}$ be a GNS of genus $g(S)$. Then
\begin{enumerate}
\item $|C(S)|=|H(S)|+|N(S)|$.
\item $(h^{(1)}+1)\cdot \ldots \cdot (h^{(d)}+1)\leq |C(S)|$ for every $\textbf{h}\in H(S)$.
\end{enumerate} \end{lemma}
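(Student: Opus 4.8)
The plan is to read both statements straight off the definitions of $C(S)$, $N(S)$, and $H(S)$, and to invoke the identity $|C(\textbf{h})|=(h^{(1)}+1)\cdots(h^{(d)}+1)$ recorded immediately before the lemma. Neither part requires any machinery beyond this.

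For part (1), I would exhibit $C(S)$ as a disjoint union by splitting it according to membership in $S$. Every $\textbf{x}\in C(S)$ satisfies $\textbf{x}\leq\textbf{h}$ for some $\textbf{h}\in H(S)$, and either $\textbf{x}\in S$ or $\textbf{x}\notin S$. In the first case $\textbf{x}\in N(S)$ by definition, and in the second case $\textbf{x}\in H(S)$. Conversely, $N(S)\subseteq C(S)$ is immediate, and each hole $\textbf{h}$ lies in $C(S)$ because $\textbf{h}\leq\textbf{h}$, so $H(S)\subseteq C(S)$. Since $S$ and $H(S)$ are disjoint, this presents $C(S)$ as the disjoint union $N(S)\,\sqcup\,H(S)$, and counting gives $|C(S)|=|H(S)|+|N(S)|$.

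For part (2), I would fix $\textbf{h}\in H(S)$ and observe the containment $C(\textbf{h})\subseteq C(S)$: any $\textbf{x}\leq\textbf{h}$ is below a hole (namely $\textbf{h}$ itself), hence lies in $C(S)$. Passing to cardinalities and applying the identity $|C(\textbf{h})|=(h^{(1)}+1)\cdots(h^{(d)}+1)$ noted just before the lemma yields $(h^{(1)}+1)\cdots(h^{(d)}+1)\leq|C(S)|$.

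I do not expect a genuine obstacle in either part; both are direct consequences of the definitions. The only points that deserve a word of care are the observation that every hole lies below itself, which is what forces $H(S)\subseteq C(S)$ in part (1), and the fact that the product $(h^{(1)}+1)\cdots(h^{(d)}+1)$ counts exactly the lattice points of the box $C(\textbf{h})$ — precisely the identity already established in the text preceding the lemma.
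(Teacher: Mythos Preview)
Your argument is correct and is precisely the unpacking of the definitions that the paper has in mind; the paper's own proof simply reads ``Trivial.'' There is nothing to add or compare.
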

\begin{proof}
Trivial.
\end{proof}

\begin{exa} \label{exaf} \rm
In Figure~\ref{fig:GNS}), we consider the generalized numerical semigroup $S=\mathbb{N}^{2}\setminus\{(0,1),(1,0),(1,1),(1,2),(1,3),(1,4),(2,1),(3,0),(3,2)\}$. The minimal system of generators of $S$ is the set $G(S)=\{(2,0),(5,0),(0,2),(0,3),(1,5),(1,6),(3,1),(4,1)\}$.  The holes of $S$, marked black in Figure~\ref{fig:GNS}), determine the red region $C(S)$ in the figure. The elements of $S$ lying in the red region are the elements of $N(S)=\{(0,0),(2,0),(3,1),(0,2),(2,2),(0,3),(0,4)\}$ (marked by a circle). We have $n(S)=|N(S)|=7$, $e(S)=|G(S)|=8$, $C(S)$ is the disjoint union of $H(S)$ and $N(S)$, and $c(S)=|C(S)|=16$. Note that for any $\textbf{h}\in H(S)$ we have $(h^{(1)}+1)(h^{(2)}+1)\leq 16$.
\begin{figure}[htp]

\begin{tikzpicture} 
\draw [help lines] (0,0) grid (5,5);
\draw [<->] (0,6) node [left] {$y$} -- (0,0)
-- (6,0) node [below] {$x$};
\foreach \i in {1,...,5}
\draw (\i,1mm) -- (\i,-1mm) node [below] {$\i$} 
(1mm,\i) -- (-1mm,\i) node [left] {$\i$}; 
\node [below left] at (0,0) {$O$};

\draw (0,0) circle (3pt);
\draw [mark=*] plot (1,0);
\draw (2,0) circle (3pt);
\draw [mark=*] plot (3,0);
\draw [mark=*] plot (0,1);
\draw [mark=*] plot (1,1);
\draw [mark=*] plot (2,1);
\draw (3,1) circle (3pt);
\draw (0,2) circle (3pt);
\draw [mark=*] plot (1,2);
\draw (2,2) circle (3pt);
\draw [mark=*] plot (3,2);
\draw (0,3) circle (3pt);
\draw [mark=*] plot (1,3);
\draw (0,4) circle (3pt);
\draw [mark=*] plot (1,4);

\draw [fill=red, opacity=0.2] (0,0) rectangle (1,4);
\draw [fill=red, opacity=0.2] (1,0) rectangle (3,2);
\end{tikzpicture}
\caption{The generalized numerical semigroup in Example~\ref{exaf}.}\label{fig:GNS}
\end{figure}
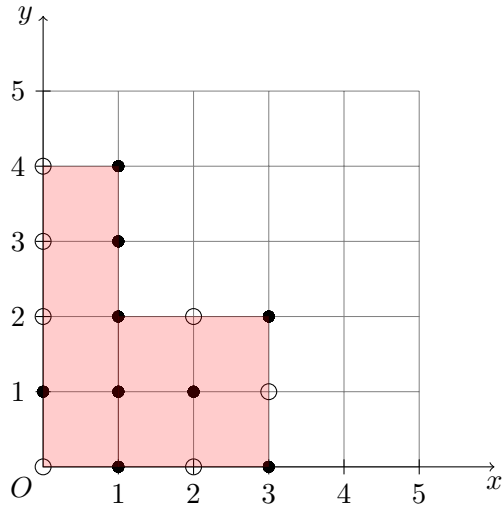
\end{exa}
If we let $c(S)$, for generalized numerical semigroups, play the role of $F(S)+1$ in numerical semigroups, we can extend Conjecture~\ref{wilf-frof} to arbitrary generalized numerical semigroups as follows.

\begin{thm2} \label{Wilf-gen} If $S\subseteq \mathbb{N}^{d}$ then 
	$e(S)n(S)\geq d\,c(S)$
\end{thm2}

\begin{rem}
The Generalized Wilf Conjecture can also be stated for the class of $\mathcal{C}$-semigroups considered in~\cite{G.G.2016}.  For readability (and to aid intuition) we save discussion of this for a future paper.
\end{rem}

\section{Multiplicity and thickenings}
A crucial concept for numerical semigroups is \textit{multiplicity}.  There is a natural way to define this notion for generalized numerical semigroups which we will use to verify \genwilf{} for large classes of generalized numerical semigroups.

\begin{defn}\label{def:ModuleGens}
Let $S\subset\mathbb{N}^d$ be a generalized numerical semigroup.  Let $M(S)^*=\{\textbf{h}\in H(S) \mid C(\textbf{h}) \cap S =\{\textbf {0}\}\}$.  Equivalently, $M(S)^*$ consists of all non-zero $\mathbf{x}\in\mathbb{N}^d$ satisfying that $\mathbf{0}$ is the only element of $S$ less than or equal to $\mathbf{x}$ in the natural partial order on $\mathbb{N}^d$.  Following~\cite{HTY09}, we call the elements of $M(S)^*$ the \textit{fundamental holes} of $S$. Let $M(S)=M(S)^* \cup \{\textbf{0}\}$. The \textit{multiplicity} of $S$ is defined as $m(S)=|M(S)|$.  
\end{defn}

\begin{lemma}\label{lem:ModuleGens}
The set $M(S)$ is the minimal subset of $\mathbb{N}^d$ satisfying that every $\textbf{x}\in\mathbb{N}^d$ can be written as $\textbf{x}=\textbf{m}+\textbf{s}$, where $\textbf{m}\in M(S)$ and $\textbf{s}\in S$.
\end{lemma}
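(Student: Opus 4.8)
The plan is to prove the two halves of the claim separately: first that $M(S)$ has the stated spanning property (every $\textbf{x}\in\mathbb{N}^d$ decomposes as $\textbf{x}=\textbf{m}+\textbf{s}$ with $\textbf{m}\in M(S)$ and $\textbf{s}\in S$), and then that $M(S)$ is contained in \emph{every} subset of $\mathbb{N}^d$ enjoying that property. The second half immediately yields minimality, and in fact uniqueness of the minimal such set.

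For the spanning property I would fix $\textbf{x}\in\mathbb{N}^d$ and consider $N(\textbf{x})=\{\textbf{s}\in S\mid \textbf{s}\le\textbf{x}\}$. This set is nonempty since $\textbf{0}\in N(\textbf{x})$, and it is finite since it is contained in the finite box $C(\textbf{x})$; hence it possesses a maximal element $\textbf{s}$ with respect to the partial order $\le$. Put $\textbf{m}=\textbf{x}-\textbf{s}\in\mathbb{N}^d$. I claim $\textbf{m}\in M(S)$. If $\textbf{m}=\textbf{0}$ this is clear. Otherwise I would show $C(\textbf{m})\cap S=\{\textbf{0}\}$: if some nonzero $\textbf{t}\in S$ satisfied $\textbf{t}\le\textbf{m}$, then $\textbf{s}+\textbf{t}\in S$ by monoid closure and $\textbf{s}+\textbf{t}\le\textbf{s}+\textbf{m}=\textbf{x}$, so $\textbf{s}+\textbf{t}\in N(\textbf{x})$ would strictly dominate $\textbf{s}$, contradicting maximality. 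Since $\textbf{m}\neq\textbf{0}$ and $C(\textbf{m})\cap S=\{\textbf{0}\}$, in particular $\textbf{m}\notin S$, so $\textbf{m}\in H(S)$ and therefore $\textbf{m}\in M(S)^*\subseteq M(S)$, giving the desired decomposition $\textbf{x}=\textbf{m}+\textbf{s}$.

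For minimality I would show that any $T\subseteq\mathbb{N}^d$ with $T+S=\mathbb{N}^d$ must contain $M(S)$. Given $\textbf{m}\in M(S)$, write $\textbf{m}=\textbf{t}+\textbf{s}$ with $\textbf{t}\in T$ and $\textbf{s}\in S$; then $\textbf{s}\le\textbf{m}$, so $\textbf{s}\in C(\textbf{m})\cap S$. If $\textbf{m}=\textbf{0}$ then $\textbf{s}=\textbf{0}$ and $\textbf{t}=\textbf{0}\in T$; if $\textbf{m}\in M(S)^*$ then $C(\textbf{m})\cap S=\{\textbf{0}\}$ forces $\textbf{s}=\textbf{0}$, whence $\textbf{m}=\textbf{t}\in T$. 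Either way $\textbf{m}\in T$, so $M(S)\subseteq T$. Combined with the spanning property, this shows $M(S)$ is the (unique) smallest subset with the required property.

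The arguments are short and flow directly from the definition of the fundamental holes. The only genuinely delicate point is the spanning step: one must invoke finiteness of $N(\textbf{x})$ to guarantee a maximal element, and then use monoid closure to convert a hypothetical nonzero element below $\textbf{m}$ into a strictly larger element of $N(\textbf{x})$. I expect this maximality-versus-closure interplay to be the crux; the minimality half is essentially forced by the defining condition $C(\textbf{m})\cap S=\{\textbf{0}\}$.
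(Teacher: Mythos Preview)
Your proof is correct and follows essentially the same approach as the paper: pick a maximal $\textbf{s}\in S$ with $\textbf{s}\le\textbf{x}$, argue by contradiction (via monoid closure) that $\textbf{x}-\textbf{s}\in M(S)$, and for minimality observe that any decomposition $\textbf{m}=\textbf{t}+\textbf{s}$ forces $\textbf{s}=\textbf{0}$. Your treatment is in fact slightly more careful than the paper's in that you explicitly justify the existence of a maximal element and separate out the $\textbf{m}=\textbf{0}$ case.
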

\begin{proof}
We first prove that every $\textbf{x}\in\mathbb{N}^d$ can be written as $\textbf{x}=\textbf{m}+\textbf{s},$ where $\textbf{m}\in M(S)$ and $\textbf{s}\in S$.  Suppose $\textbf{x}\in \mathbb{N}^d$.  Let $\textbf{s}$ be a maximal element of $S$ (under the natural partial order on $\mathbb{N}^d$) so that $\textbf{s}\le \textbf{x}$.  Write $\textbf{x}=\textbf{s}+(\textbf{x}-\textbf{s})$; clearly $\textbf{x}-\textbf{s}\in\mathbb{N}^d$ since $\textbf{s}\le \textbf{x}$.  We prove that $\textbf{x}-\textbf{s}\in M(S)$.  If $\textbf{x}-\textbf{s}\notin M(S)$ then there is some $\textbf{s}'\in S$ with $\textbf{s}'\neq \textbf{0}$ so that $\textbf{s}'\le \textbf{x}-\textbf{s}$.  But then $\textbf{s}<\textbf{s}+\textbf{s}'\le \textbf{x}$, contradicting how $\textbf{s}$ was chosen.  So $\textbf{x}-\textbf{s}\in M(S)$ and $\textbf{x}=\textbf{s}+(\textbf{x}-\textbf{s})$ gives a decomposition of the desired form.

Now suppose that $T\subset\mathbb{N}^d$ satisfies that every $\textbf{x}\in\mathbb{N}^d$ can be written as $\textbf{x}=\textbf{s}+\textbf{t}$ for some $\textbf{s}\in S$ and $\textbf{t}\in T$.  Suppose that $\textbf{m}\in M(S)$.  Then $\textbf{m}=\textbf{s}+\textbf{t}$ for some $\textbf{s}\in S,\textbf{t}\in T$.  Since the only element of $S$ less than $\textbf{m}$ is $\textbf{0}$, we have $\textbf{m}=\textbf{0}+\textbf{t}=\textbf{t}$.  Thus $M(S)\subseteq T$.
\end{proof}

\begin{rem}
Even when $S$ has infinitely many holes, the set of fundamental holes is finite (see~\cite{HTY09}).  From an algebraic perspective this is explained by the fact that the integral closure of a ring is module finite over the ring.
\end{rem}


\begin{lemma}\label{lem:MultiplicityInequality}
Suppose $S\subset\N^d$ is a generalized numerical semigroup.  Then $c(S)\le m(S)n(S)$.
\end{lemma}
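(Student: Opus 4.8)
The plan is to exhibit an explicit injection $\phi\colon C(S)\to M(S)\times N(S)$. Since the target set has cardinality $|M(S)|\cdot|N(S)|=m(S)n(S)$, producing such an injection immediately yields $c(S)=|C(S)|\le m(S)n(S)$, which is exactly the claim.

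First I would invoke Lemma~\ref{lem:ModuleGens}, which guarantees that every $\textbf{x}\in\mathbb{N}^d$ admits a decomposition $\textbf{x}=\textbf{m}+\textbf{s}$ with $\textbf{m}\in M(S)$ and $\textbf{s}\in S$. Concretely, following the argument in the proof of that lemma, one may take $\textbf{s}$ to be a maximal element of $S$ satisfying $\textbf{s}\le\textbf{x}$ and set $\textbf{m}=\textbf{x}-\textbf{s}$. Fixing one such decomposition for every $\textbf{x}$ defines a map $\textbf{x}\mapsto(\textbf{m},\textbf{s})$, which I will restrict to $C(S)$.

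The key step is to verify that, when $\textbf{x}\in C(S)$, the semigroup component $\textbf{s}$ lands in the smaller set $N(S)$ rather than merely in $S$. This is precisely where the hypothesis $\textbf{x}\in C(S)$ is used: by definition there exists a hole $\textbf{h}\in H(S)$ with $\textbf{x}\le\textbf{h}$, and then $\textbf{s}\le\textbf{x}\le\textbf{h}$ together with $\textbf{s}\in S$ forces $\textbf{s}\in N(\textbf{h})\subseteq N(S)$. Hence $\phi$ really does map $C(S)$ into $M(S)\times N(S)$.

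Finally, injectivity of $\phi$ is automatic from the additive structure: if $\phi(\textbf{x})=\phi(\textbf{x}')=(\textbf{m},\textbf{s})$, then $\textbf{x}=\textbf{m}+\textbf{s}=\textbf{x}'$, since each argument is recovered simply by summing the two coordinates of its image. Thus $c(S)\le m(S)n(S)$. I do not expect a genuine obstacle here; the only point requiring any care is the containment $\textbf{s}\in N(S)$ (which hinges on transitivity of $\le$), and one should also dispatch the degenerate case $S=\mathbb{N}^d$ separately, where $c(S)=n(S)=0$ and the inequality holds trivially.
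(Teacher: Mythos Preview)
Your proof is correct and follows essentially the same approach as the paper: both construct an injection $C(S)\to M(S)\times N(S)$ by sending $\textbf{x}$ to a pair $(\textbf{m},\textbf{s})$ with $\textbf{x}=\textbf{m}+\textbf{s}$, $\textbf{m}\in M(S)$, $\textbf{s}\in S$, and then observe that $\textbf{s}\le\textbf{x}\le\textbf{h}$ forces $\textbf{s}\in N(S)$. The only cosmetic difference is that the paper fixes a total order $\prec$ refining the natural partial order to single out a canonical maximal $\textbf{s}$, whereas you simply fix an arbitrary choice of decomposition; since injectivity follows from $\textbf{x}=\textbf{m}+\textbf{s}$ regardless of how the decomposition is chosen, your formulation is just as valid.
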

\begin{proof}
Let $\prec$ be any total order on $\N^d$ which refines the natural partial order.  Define a map $\psi_{\prec}:\N^d\to M(S)\times S$ as follows: for $\textbf{x}\in\N^d$, select $\textbf{s}=\max_{\prec}\{\textbf{t}\in S\mid \textbf{t}\leq \textbf{x}\}$.  As in the proof of Lemma~\ref{lem:ModuleGens}, $\textbf{x}-\textbf{s}\in M(S)$.  Since $\prec$ is a total order, the decomposition $\textbf{x}=\textbf{s}+(\textbf{x}-\textbf{s})$ chosen in this way is unique.  Define $\psi_{\prec}(\textbf{x})=(\textbf{x}-\textbf{s},\textbf{s})$.  Clearly this is a well-defined injection.

Now restrict $\psi_{\prec}$ to the subset $C(S)=\{\textbf{x}\in \N^d: \textbf{x}\le \textbf{h}\mbox{ for some } \textbf{h}\in H(S) \}$.  The largest $\textbf{s}\in S$ so that $\textbf{s}\preceq \textbf{x}$ also is in $C(S)$.  This gives an injection $\psi_{\prec}: C(S)\to M(S)\times S\cap C(S)$.  We observe that $c(S)=|C(S)|$, $m(S)=|M(S)|$, and $|S\cap C(S)|=n(S)$, which concludes the proof.
\end{proof}

\begin{defn}\label{def:minimalmultiplicity}
We say a generalized numerical semigroup $S$ has \textit{minimal multiplicity} if $c(S)=m(S)n(S)$.  In this case every $\textbf{x}\in C(S)$ can be written uniquely as $\textbf{x}=\textbf{m}+\textbf{s}$, where $\textbf{m}\in M(S)$ and $\textbf{s}\in S$.
\end{defn}

In the following, $S^*$ denotes $S\setminus \textbf{0}$.

\begin{defn}
Write $\be_1,\ldots,\be_{d+1}$ for the standard semigroup generators of $\N^{d+1}$ and consider the semigroup isomorphic to $\N^d$ inside $\N^{d+1}$ generated by $\{\be_1,\ldots,\be_{d+1}\}\setminus \be_i$ for some $i$.  By abuse of notation we refer to the latter semigroup as $\N^d$.  Suppose $S\subset\N^d$ is a semigroup.  The $k$-thickening of $S\subset\N^d$ along axis $i$ in $\N^{d+1}$ is the semigroup $\kthick{k}(S,i)\subset\N^{d+1}$ defined as
\[
\kthick{k}(S,i)=S\cup (\be_i+S)\cup\cdots\cup (k\be_i+S) \cup ((k+1)\be_i+\N^{d+1}).
\]
\end{defn}

\begin{rem}
If $S\subseteq\mathbb{N}^{d}$ is a GNS then $\kthick{0}(S,i)$ corresponds to embedding $S$ in $\mathbb{N}^{d+1}$ in the coordinate hyperplane $x_i=0$.
\end{rem}

\begin{prop}\label{prop:MingensOfThickenings}
Suppose $S\subset\N^d$ is a semigroup with minimal generating set $G(S)$, $S$-module generators $M(S)$, and multiplicity $m(S)$.  The minimal generating set of $\kthick{k}(S,i)\subset\N^{d+1}$ is 
\[
\{\be_i\}\cup G(S)\cup ((k+1)\be_i+M(S)^*),
\]
where $M(S)^*$ is the set of fundamental holes of $S$.
\end{prop}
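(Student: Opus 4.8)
The plan is to verify the claimed generating set of $\kthick{k}(S,i)$ directly, by showing first that the proposed set generates, and then that each listed generator is indeed needed (minimality). Throughout I would use the decomposition structure of $\kthick{k}(S,i)$ coming from its definition: every element lies in exactly one of the ``layers'' $j\be_i+S$ for $0\le j\le k$, or in the top layer $(k+1)\be_i+\N^{d+1}$. The key observation to record at the outset is that an element $\textbf{x}\in\N^{d+1}$ belongs to $\kthick{k}(S,i)$ if and only if, writing $\textbf{x}=a\be_i+\textbf{y}$ with $\textbf{y}$ in the copy of $\N^d$ spanned by the other axes, either $a\ge k+1$, or $0\le a\le k$ and $\textbf{y}\in S$.

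First I would prove that the proposed set $G'\lik\{\be_i\}\cup G(S)\cup((k+1)\be_i+M(S)^*)$ generates $\kthick{k}(S,i)$. For an element in a layer $j\be_i+\textbf{s}$ with $0\le j\le k$ and $\textbf{s}\in S$, I write it as $j\be_i + \textbf{s}$, using $j$ copies of $\be_i$ and a generating expression for $\textbf{s}$ in terms of $G(S)$. For an element $\textbf{x}=a\be_i+\textbf{y}$ in the top region with $a\ge k+1$, I would invoke Lemma~\ref{lem:ModuleGens} to write $\textbf{y}=\textbf{m}+\textbf{s}$ with $\textbf{m}\in M(S)$ and $\textbf{s}\in S$. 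If $\textbf{m}=\textbf{0}$ then $\textbf{y}\in S$ and I peel off copies of $\be_i$ down to layer $k$ or below; if $\textbf{m}\in M(S)^*$ then $\textbf{x}=\bigl((k+1)\be_i+\textbf{m}\bigr)+(a-k-1)\be_i+\textbf{s}$, and the three pieces are generated by the third family, by $\be_i$, and by $G(S)$ respectively. This shows $G'$ generates.

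The harder and more delicate part is minimality, and this is where I expect the main obstacle to lie. I would argue that no element of $G'$ can be written as a sum of two nonzero elements of $\kthick{k}(S,i)$, and that the three families are pairwise non-redundant. For $\be_i$ this is immediate since it is a unit vector. For $g\in G(S)$, any expression of $g$ as a sum of elements of $\kthick{k}(S,i)$ must keep the $\be_i$-coordinate zero (as $g$ has $\be_i$-coordinate $0$ and all generators have nonnegative $\be_i$-coordinate), hence reduces to an expression inside $S$, and minimality of $G(S)$ in $S$ finishes it. The subtle case is $(k+1)\be_i+\textbf{m}$ with $\textbf{m}\in M(S)^*$: I must rule out writing it using $\be_i$'s together with elements of $G(S)$ or lower elements of the third family. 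Here the defining property of a fundamental hole is essential — because $\textbf{0}$ is the only element of $S$ below $\textbf{m}$, subtracting any $\be_i$ or any nonzero element of $S$ from $(k+1)\be_i+\textbf{m}$ leaves a residue that cannot itself lie in $\kthick{k}(S,i)$ in a way that completes to a shorter factorization; the $\be_i$-coordinate $k+1$ is exactly the threshold forcing any such residue into a layer $j\le k$ whose $\N^d$-part would then have to be a proper nonzero $S$-predecessor of $\textbf{m}$, contradicting $\textbf{m}\in M(S)^*$. Carefully tracking the $\be_i$-coordinate across a hypothetical factorization, and using the fundamental-hole property to block every case, is the crux of the argument.
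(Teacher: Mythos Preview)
Your proposal is correct and follows essentially the same two-step outline as the paper: first show the set generates by splitting into the layers $j\be_i+S$ ($0\le j\le k$) and the top region $(k+1)\be_i+\N^{d+1}$, invoking Lemma~\ref{lem:ModuleGens} for the latter; then verify minimality. The only difference is in the minimality argument for the third family $(k+1)\be_i+M(S)^*$: the paper dispatches this in one line by appealing to the minimality clause of Lemma~\ref{lem:ModuleGens}, whereas you unpack the same content directly by tracking the $\be_i$-coordinate across a hypothetical factorization and using the defining property of a fundamental hole---this is just the proof of that minimality clause replayed in situ, so the arguments are equivalent.
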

\begin{proof}
We first show that any $\textbf{x}\in\kthick{k}(S,i)$ can be written in terms of the prescribed generators.  First, if $\textbf{x}\in j\be_i+S$ for some $0\le j\le k$, then clearly $\textbf{x}$ is the sum of $j\be_i$ and some number of elements of $G(S)$.  Now suppose $\textbf{x}=(k+1)\be_i+\textbf{n}$ for some $\textbf{n}\in\N^{d+1}$.  By Lemma~\ref{lem:ModuleGens}, there is some $\textbf{m}\in M(S)$, $\textbf{s}\in S$ so that $\textbf{n}=\textbf{m}+\textbf{s}$.  Hence $\textbf{x}$ can be written as a sum of $(k+1)\be_i, \textbf{m},$ and some number of generators of $S$.  For minimality, clearly $\be_i$ and $G(S)$ cannot be removed from the generating set.  If any element of $(k+1)\be_i+M(S)^*$ is removed from the generating set, then Lemma~\ref{lem:ModuleGens} guarantees that all of $(k+1)\be_i+\N^{d+1}$ will not be generated.
\end{proof}

\begin{coro}\label{coro:thickstats}
Suppose $S\subset\N^d$ is a semigroup.  Then $e(\kthick{k}(S,i))=e(S)+m(S)$, $n(\kthick{k}(S,i))=(k+1)n(S)$, and $c(\kthick{k}(S,i))=(k+1)c(S)$.
\end{coro}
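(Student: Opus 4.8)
The plan is to compute each of the three statistics $e$, $n$, and $c$ for $\kthick{k}(S,i)$ separately, reading off the generating-set formula already established in Proposition~\ref{prop:MingensOfThickenings} for the edge count and directly analyzing the geometry of holes of the thickening for the other two. First I would handle $e(\kthick{k}(S,i))$: by Proposition~\ref{prop:MingensOfThickenings}, the minimal generating set is $\{\be_i\}\cup G(S)\cup((k+1)\be_i+M(S)^*)$, a disjoint union whose three pieces have cardinalities $1$, $e(S)$, and $|M(S)^*|=m(S)-1$ respectively (since $M(S)=M(S)^*\cup\{\textbf 0\}$ and $\textbf 0$ is not a minimal generator). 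Summing gives $1+e(S)+(m(S)-1)=e(S)+m(S)$, as claimed. The only thing to verify here is that the three listed sets are genuinely disjoint, which is immediate because elements of $\{\be_i\}\cup G(S)$ have $i$-th coordinate at most, say, that of the generators of $S$ (which all have $i$-th coordinate $0$ under the embedding, except $\be_i$ itself which has $i$-th coordinate $1$), whereas elements of $(k+1)\be_i+M(S)^*$ have $i$-th coordinate $k+1\geq 1$; a short case check closes this.

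For the $n$ and $c$ counts the main idea is that the hole set of the thickening is stratified into $k+1$ copies of the hole set of $S$, indexed by the $i$-th coordinate. Concretely, I would show $H(\kthick{k}(S,i))=\bigcup_{j=0}^{k}(j\be_i+H(S))$, where each $H(S)$ sits in the hyperplane-translate $\{x_i=j\}$. The containment $j\be_i+H(S)\subseteq H(\kthick{k}(S,i))$ and the reverse require checking that a point with $i$-th coordinate $j\le k$ lies in the thickening if and only if its projection to $\N^d$ lies in $S$ (for $0\le j\le k$), while every point with $i$-th coordinate $\ge k+1$ lies in the thickening. This follows directly from the defining decomposition $\kthick{k}(S,i)=S\cup(\be_i+S)\cup\cdots\cup(k\be_i+S)\cup((k+1)\be_i+\N^{d+1})$, because the summands with distinct multiples of $\be_i$ occupy distinct slices $\{x_i=j\}$ for $0\le j\le k$ and the final summand fills everything with $x_i\ge k+1$.

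Given this slicing, both remaining equalities reduce to a coordinatewise comparison in $\N^{d+1}$. For $c(\kthick{k}(S,i))$ I would observe that $C(\kthick{k}(S,i))=\bigcup_{j=0}^{k}(j\be_i+C(S))$: a point is $\le$ some hole of the thickening iff, after projecting away the $i$-th coordinate, it is $\le$ some hole of $S$ while its $i$-th coordinate is at most that of a hole in the appropriate slice. Since the holes of $\kthick{k}(S,i)$ all have $i$-th coordinate at most $k$, the set $C(\kthick{k}(S,i))$ consists of exactly the points with $i$-th coordinate in $\{0,1,\dots,k\}$ whose projection lies in $C(S)$, giving $(k+1)$ disjoint translated copies of $C(S)$ and hence $c(\kthick{k}(S,i))=(k+1)c(S)$. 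An identical argument restricted to semigroup elements yields $N(\kthick{k}(S,i))=\bigcup_{j=0}^{k}(j\be_i+N(S))$ and therefore $n(\kthick{k}(S,i))=(k+1)n(S)$.

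The step I expect to be the main obstacle is pinning down the hole-slicing claim rigorously at the boundary coordinate $x_i=k$ versus $x_i=k+1$: one must be careful that points in the slice $\{x_i=k\}$ whose projection is a hole of $S$ really do remain holes (they are not accidentally produced as sums involving the final block $(k+1)\be_i+\N^{d+1}$), and conversely that nothing in $\{x_i\ge k+1\}$ can be a hole. Both directions hinge on the fact that the $i$-th coordinate is additive and that the generators other than $(k+1)\be_i+M(S)^*$ have $i$-th coordinate $0$ or $1$, so I would isolate this as a short lemma-style computation comparing $i$-th coordinates. Once the decomposition $H(\kthick{k}(S,i))=\bigsqcup_{j=0}^{k}(j\be_i+H(S))$ is secured, the $n$ and $c$ formulas are immediate bijective consequences.
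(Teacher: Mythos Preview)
Your proposal is correct and matches the paper's approach: the paper states this result as an immediate corollary of Proposition~\ref{prop:MingensOfThickenings} and the definition of the thickening, giving no proof at all. Your write-up simply supplies the routine details the authors leave implicit, namely the disjointness of the three pieces of the generating set and the slice decomposition $H(\kthick{k}(S,i))=\bigsqcup_{j=0}^{k}(j\be_i+H(S))$, from which the $n$ and $c$ formulas follow.
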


\begin{prop}\label{prop:ThickeningAndWilf}
If $S$ satisfies \genwilf{} ($dc(S)\le n(S)e(S)$) then so does $\kthick{k}(S,i)$.  Moreover, if $S$ has minimal multiplicity and satisfies $dc(S)=n(S)e(S)$, then $(d+1)c(\kthick{k}(S,i))=n(\kthick{k}(S,i))e(\kthick{k}(S,i))$.
\end{prop}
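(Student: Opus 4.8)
The plan is to derive the claim purely from the accounting already established in Corollary~\ref{coro:thickstats}, which records how $e$, $n$, and $c$ transform under thickening, and to track the role of the ambient dimension, which increases from $d$ to $d+1$ when we pass from $S\subset\N^d$ to $\kthick{k}(S,i)\subset\N^{d+1}$.

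For the first assertion, I would start from the hypothesis $d\,c(S)\le n(S)e(S)$ and compute the target quantity directly. Using Corollary~\ref{coro:thickstats} we have
\[
n(\kthick{k}(S,i))\,e(\kthick{k}(S,i))=(k+1)n(S)\bigl(e(S)+m(S)\bigr),
\]
while $(d+1)c(\kthick{k}(S,i))=(d+1)(k+1)c(S)$. So after cancelling the common positive factor $(k+1)$, I must show $(d+1)c(S)\le n(S)e(S)+n(S)m(S)$. The hypothesis already gives $d\,c(S)\le n(S)e(S)$, so it suffices to show $c(S)\le n(S)m(S)$ to cover the extra copy of $c(S)$ coming from the increased dimension. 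But that inequality is exactly Lemma~\ref{lem:MultiplicityInequality}. Adding the two inequalities gives the result, so this direction is essentially just bookkeeping: the key structural input is that the dimension bump of $+1$ is precisely compensated by the multiplicity-genus inequality $c(S)\le m(S)n(S)$.

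For the second (equality) assertion, I would argue that under the hypotheses the two inequalities used above both become equalities, and hence so does their sum. The assumption $d\,c(S)=n(S)e(S)$ turns the first inequality into equality, and the assumption that $S$ has minimal multiplicity means, by Definition~\ref{def:minimalmultiplicity}, exactly that $c(S)=m(S)n(S)$, turning the second into equality. Adding them yields $(d+1)c(S)=n(S)e(S)+n(S)m(S)=n(S)\bigl(e(S)+m(S)\bigr)$, and multiplying through by $(k+1)$ and re-applying Corollary~\ref{coro:thickstats} gives $(d+1)c(\kthick{k}(S,i))=n(\kthick{k}(S,i))e(\kthick{k}(S,i))$, as desired.

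I do not expect a serious obstacle here, since all the hard combinatorial work has been front-loaded into Proposition~\ref{prop:MingensOfThickenings} and its corollary, and into Lemma~\ref{lem:MultiplicityInequality}. The only point demanding a little care is making sure the dimension parameter is handled correctly: the statement for $S$ uses $d$ but the statement for $\kthick{k}(S,i)$ uses $d+1$, and the whole content of the proposition is that the multiplicity inequality absorbs exactly this one extra dimension. I would write the proof as a short two-line chain of inequalities for the first part, followed by the observation that minimal multiplicity plus the equality hypothesis forces equality throughout.
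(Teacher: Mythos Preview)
Your proposal is correct and follows essentially the same approach as the paper: reduce via Corollary~\ref{coro:thickstats} to the inequality $(d+1)c(S)\le n(S)e(S)+n(S)m(S)$, then split this into the hypothesis $dc(S)\le n(S)e(S)$ plus the multiplicity bound $c(S)\le m(S)n(S)$ from Lemma~\ref{lem:MultiplicityInequality}, observing that both become equalities under the stated hypotheses.
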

\begin{proof}
By Corollary~\ref{coro:thickstats}, $(d+1)c(\kthick{k}(S,i))=(k+1)(dc(S)+c(S))$ and $n(\kthick{k}(S,i))e(\kthick{k}(S,i))=(k+1)n(S)(e(S)+m(S))$.  Thus it suffices to show that $dc(S)+c(S)\le n(S)e(S)+n(S)m(S)$, with equality if $dc(S)=n(S)e(S)$ and $c(S)=n(S)m(S)$.  This follows from Lemma~\ref{lem:MultiplicityInequality} and our assumption that $S$ satisfies $dc(S)\le n(S)e(S)$.
\end{proof}

\begin{rem}
By Proposition~\ref{prop:ThickeningAndWilf}, it suffices to prove \genwilf{} for semigroups which are not of the form $\kthick{k}(S,i)$ for a semigroup $S$ of strictly smaller dimension.
\end{rem}

Thickening is a process that can be iterated any number of times.  We use the following notation.
\begin{defn}\label{def:IterateThick}
Let $S\subset \N^d$ be a GNS and suppose $\N^d$ is embedded in $\N^{d+t}=\mathrm{Span}_{\N}\{\be_1,\ldots,\be_{d+t}\}$ along the axes $\be_{i_1},\ldots,\be_{i_d}$ and put $\{\be_{j_1},\ldots,\be_{j_t}\}=\{\be_1,\ldots,\be_{d+t}\}\setminus\{\be_{i_1},\ldots,\be_{i_d}\}$.  Consider the iterative sequence of thickenings $S_1=\kthick{k_1}(S,j_1), S_2=\kthick{k_2}(S_1,j_2),\ldots, S_t=\kthick{k_t}(S_{t-1},j_t)$.  We write $\kthick{k_1,\ldots,k_t}(S,j_1,\ldots,j_t)$ for $S_t$.  If $k_1=\cdots=k_t=k$, then we simply write $\kthick{k}(S,j_1,\ldots,j_t)$ for $S_t$.
\end{defn}

\begin{rem}
In the sequence $S_1,\ldots,S_t$ constructed in Definition~\ref{def:IterateThick}, order does not matter.  Thus, once the axis directions $j_1,\ldots,j_t$ are chosen, there is a unique way to iteratively thicken $S$ along these axis directions.
\end{rem}

Applying Proposition~\ref{prop:ThickeningAndWilf} repeatedly, we see that if $S$ is a semigroup which satisfies \genwilf{}, then $\kthick{k_1,\ldots,k_t}(S,\be_{j_1},\ldots,\be_{j_t})$ also satisfies \genwilf{}.  We now consider a special case of iterative thickening; this is the case when $k=0$ for each step.

Let $A$ be a subset of $\mathbb{N}^{d}$, denote by $\mathrm{Span}_{\mathbb{R}}(A)$ the $\mathbb{R}$-vector subspace of $\mathbb{R}^{d}$ spanned by the elements of $A$. Recall that a vector subspace of $\mathbb{R}^{d}$ is a \emph{coordinate linear space} if it is spanned by a subset of the standard basis $\{\textbf{e}_{1},\textbf{e}_{2},\ldots,\textbf{e}_{d}\}$. The results in the second part of this section are inspired by the following proposition.

\begin{prop}[\cite{Prof}, Proposition 5.2]\label{prop:Coord} Let $S\subseteq \mathbb{N}^{d}$ be a GNS and $H(S)$ the set of its holes. Then $\mathrm{Span}_{\mathbb{R}}(H(S))$ is a coordinate linear space.\label{spaziocoord}\end{prop}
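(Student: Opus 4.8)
The plan is to show that the real span of the hole set $H(S)$ is spanned by those standard basis vectors $\textbf{e}_i$ whose corresponding coordinate axis contains a hole (equivalently, is not eventually entirely contained in $S$), and that this span is in fact a coordinate linear space. The main structural fact I would exploit is that $H(S)$ is finite, so $S$ is co-finite in $\mathbb{N}^d$; in particular, for each axis direction $\textbf{e}_i$, the intersection $S \cap \{t\,\textbf{e}_i : t \in \mathbb{N}\}$ is itself a (one-dimensional) numerical semigroup or its complement along that axis is finite.

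First I would reduce to identifying the coordinate subspace. Let $I \subseteq \{1,\ldots,d\}$ be the set of indices $i$ such that the $i$-th coordinate axis $\{t\,\textbf{e}_i\}$ meets $H(S)$, i.e.\ some multiple of $\textbf{e}_i$ is a hole. I claim $\mathrm{Span}_{\mathbb{R}}(H(S)) = \mathrm{Span}_{\mathbb{R}}\{\textbf{e}_i : i \in I\}$. The inclusion $\supseteq$ is immediate, since each such axis contributes a hole which is a positive scalar multiple of $\textbf{e}_i$. For the reverse inclusion $\subseteq$, the key step is to argue that every hole $\textbf{h}\in H(S)$ has support contained in $I$, meaning $h^{(j)} = 0$ whenever $j \notin I$. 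This is where the finiteness of $H(S)$ does the real work: if $j \notin I$, then every multiple of $\textbf{e}_j$ lies in $S$, and I would want to show that if $\textbf{h}$ had a nonzero $j$-component, one could produce infinitely many holes, contradicting finiteness.

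The heart of the argument, and the step I expect to be the main obstacle, is establishing this support condition. The natural approach is a translation/descent argument: suppose $\textbf{h} = (h^{(1)},\ldots,h^{(d)}) \in H(S)$ with $h^{(j)} > 0$ for some $j \notin I$. Since $j \notin I$, all sufficiently large (indeed, by finiteness, all) multiples $t\,\textbf{e}_j \in S$. I would try to show that the whole ray $\textbf{h} + \mathbb{N}\,\textbf{e}_j$ cannot be eventually absorbed into $S$ unless $\textbf{h}$ itself behaves nicely — more precisely, I would consider the infinite collection of points $\textbf{h}, \textbf{h}+\textbf{e}_j, \textbf{h}+2\textbf{e}_j,\ldots$ and analyze when these can lie in $S$. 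The subtlety is that a hole plus a semigroup element need not be a hole, so one cannot directly generate infinitely many holes by translation. Instead, I would look at a hole $\textbf{h}$ chosen to be \emph{maximal} in the $j$-th coordinate among holes with the same other coordinates, and use that $S$ is a monoid: since $t\,\textbf{e}_j \in S$ for all large $t$, adding such elements to points of $S$ and comparing with the structure of the complement should force a contradiction with finiteness, or force $h^{(j)} = 0$.

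To make the obstacle concrete and complete the proof, I would formalize the descent as follows. Fix $j\notin I$ and let $N$ be large enough that $t\,\textbf{e}_j \in S$ for all $t \ge N$ (possible since $H(S)$ is finite, so only finitely many multiples of $\textbf{e}_j$ can be holes, but as $j\notin I$ none are, giving $N=0$ — all multiples are in $S$). The cleanest path is then to show directly that for any $\textbf{x}\in\mathbb{N}^d$ with $x^{(j)}$ large, $\textbf{x}\in S$: one writes $\textbf{x} = \textbf{x}' + x^{(j)}\textbf{e}_j$ where $\textbf{x}'$ zeroes out the $j$-th coordinate, and uses co-finiteness in the remaining coordinates together with $x^{(j)}\textbf{e}_j\in S$. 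This would show that holes can only occur for bounded $j$-coordinate, and then a sharper maximality argument pins the $j$-coordinate of any hole to $0$. Once the support condition $h^{(j)}=0$ for $j\notin I$ is secured for all holes, the inclusion $\mathrm{Span}_{\mathbb{R}}(H(S)) \subseteq \mathrm{Span}_{\mathbb{R}}\{\textbf{e}_i : i\in I\}$ follows, and combined with the easy reverse inclusion this exhibits $\mathrm{Span}_{\mathbb{R}}(H(S))$ as a coordinate linear space, completing the proof.
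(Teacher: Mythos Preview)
Your identification of the coordinate subspace is wrong, and this is a genuine gap, not a detail to be filled in. You take $I$ to be the set of indices $i$ for which some multiple $t\textbf{e}_i$ lies in $H(S)$, and then assert that every hole has support contained in $I$. This is false. Consider $d=2$ and $S=\mathbb{N}^2\setminus\{(0,1),(1,1)\}$; one checks directly that $S$ is a GNS, since neither hole decomposes as a sum of two elements of $S$. No multiple of $\textbf{e}_1$ is a hole, so $1\notin I$ and $I=\{2\}$, yet $(1,1)\in H(S)$ has nonzero first coordinate. Indeed $\mathrm{Span}_{\mathbb{R}}(H(S))=\mathbb{R}^2$, not the line spanned by $\textbf{e}_2$. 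Your descent and maximality sketch cannot be completed, because the statement it is meant to establish is simply not true.

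The correct coordinate subspace is $\mathrm{Span}\{\textbf{e}_j: j\in J\}$ where $J=\{j: h^{(j)}>0 \text{ for some } \textbf{h}\in H(S)\}$ (equivalently, $J$ is the complement of the set $\Axes(S)$ defined later in the paper). With this $J$, the inclusion $\mathrm{Span}_{\mathbb{R}}(H(S))\subseteq\mathrm{Span}\{\textbf{e}_j:j\in J\}$ is trivial, and the content lies in the reverse inclusion. Here is a clean way to get it: fix $j\in J$ and a hole $\textbf{h}$ with $h^{(j)}>0$, and write $\textbf{h}=h^{(j)}\textbf{e}_j+\textbf{h}'$ with $\textbf{h}'$ having zero $j$-th coordinate. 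Since $\textbf{h}\notin S$ and $S$ is closed under addition, at least one of $h^{(j)}\textbf{e}_j$ and $\textbf{h}'$ must lie in $H(S)$; in the first case $\textbf{e}_j\in\mathrm{Span}_{\mathbb{R}}(H(S))$ directly, and in the second $\textbf{h}-\textbf{h}'=h^{(j)}\textbf{e}_j$ exhibits $\textbf{e}_j$ as a difference of two holes. (Note that the present paper does not supply its own proof of this proposition; it is quoted from~\cite{Prof}.)
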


We will use the following notation:
\begin{itemize}
\item $S_{g,d}$ is the set of all GNS with genus $g$ in $\mathbb{N}^{d}$.
\item $S_{g,d}^{(r)}=\{S\in S_{g,d}\ |\ \dim(\mathrm{Span}_{\mathbb{R}}(H(S)))=r\}$.
\end{itemize}

%
%
%

\begin{defn}
Let $S\in S_{g,d}^{(r)}$.
\begin{enumerate}
\item Put $\Axes(S)=\{k\in \{1,2,\ldots,d\}\mid \mbox{for all\ } \textbf{h}\in H(S), h^{(k)}=0\}$, where $h^{(k)}$ is the $k$-th coordinate of $\textbf{h}\in \mathbb{N}^{d}$.

\item Set $\{i_{1},i_{2},\ldots,i_{r}\}=\{1,2,\ldots,d\}\setminus \Axes(S)$ and put $\overline{\be}_j=\be_{i_j}$ for $j=1,\ldots,r$.  By abuse of notation we write $\N^r$ for the sub-monoid of $\N^d$ generated by $\overline{\be}_1,\ldots,\overline{\be}_r$.
\item We define $\overline{S}=\N^r\cap S$.
\end{enumerate}
\label{definizione}
\end{defn}

\begin{lemma}
The semigroup $\overline{S}$ in Definition~\ref{definizione} is a generalized numerical semigroup of $\mathrm{Span}_{\mathbb{R}}(H(S))\cap\N^d\cong \N^r$.
\end{lemma}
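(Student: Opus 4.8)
The plan is to first pin down the ambient monoid $\mathrm{Span}_{\mathbb{R}}(H(S))\cap\N^d$ explicitly, and then to verify the two defining properties of a GNS for $\overline{S}$: that it is a submonoid of $\N^r$ and that its complement in $\N^r$ is finite.

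For the first step I would apply Proposition~\ref{prop:Coord}, which tells us that $\mathrm{Span}_{\mathbb{R}}(H(S))$ is a coordinate linear space, i.e.\ $\mathrm{Span}_{\mathbb{R}}(H(S))=\mathrm{Span}_{\mathbb{R}}\{\be_j\mid j\in J\}$ for some index set $J\subseteq\{1,\ldots,d\}$. The key claim is that $J=\{1,\ldots,d\}\setminus\Axes(S)=\{i_1,\ldots,i_r\}$. For $i\in\Axes(S)$, every hole has vanishing $i$-th coordinate, so $H(S)$ lies in the hyperplane $\{x^{(i)}=0\}$, whence so does its span and $\be_i\notin\mathrm{Span}_{\mathbb{R}}(H(S))$, giving $i\notin J$. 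Conversely, if $i\notin\Axes(S)$ there is a hole $\textbf{h}$ with $h^{(i)}\neq 0$; since $\textbf{h}$ lies in the coordinate space $\mathrm{Span}_{\mathbb{R}}\{\be_j\mid j\in J\}$, every coordinate of $\textbf{h}$ in a direction outside $J$ must vanish, forcing $i\in J$. This identifies $J$ with the non-axis directions and confirms $|J|=r$. Intersecting with $\N^d$ then gives $\mathrm{Span}_{\mathbb{R}}(H(S))\cap\N^d=\{\textbf{x}\in\N^d\mid x^{(k)}=0\text{ for all }k\in\Axes(S)\}$, which is exactly the submonoid $\N^r$ generated by $\overline{\be}_1,\ldots,\overline{\be}_r$.

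For the second step, $\overline{S}=\N^r\cap S$ is an intersection of two submonoids of $\N^d$ and is therefore itself a submonoid, contained in $\N^r$. For finiteness of the complement I would observe that $H(S)\subseteq\mathrm{Span}_{\mathbb{R}}(H(S))\cap\N^d=\N^r$, since each hole lies in both $\N^d$ and the real span of $H(S)$. Consequently
\[
\N^r\setminus\overline{S}=\N^r\cap(\N^d\setminus S)=\N^r\cap H(S)=H(S),
\]
which is finite because $S$ is a GNS in $\N^d$. Thus $\overline{S}$ has finite complement in $\N^r$ and is a GNS; in fact this computation shows $\overline{S}$ and $S$ share the same hole set, hence the same genus.

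The only genuinely substantive point is the identification in the first step that the coordinate directions spanning $\mathrm{Span}_{\mathbb{R}}(H(S))$ are precisely the complement of $\Axes(S)$; everything else is formal. I expect the main obstacle to be making this identification airtight, i.e.\ carefully using Proposition~\ref{prop:Coord} to convert the statement ``some hole has nonzero $i$-th coordinate'' into the statement ``$\be_i$ belongs to the span.'' Once that is settled, the remaining submonoid and finiteness checks are routine.
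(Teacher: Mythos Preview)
Your proposal is correct and follows essentially the same approach as the paper's proof, which is extremely terse: it simply invokes Proposition~\ref{prop:Coord} to identify $\mathrm{Span}_{\mathbb{R}}(H(S))\cap\N^d$ with $\N^r$ and then notes that $|\N^r\setminus\overline{S}|$ is finite because $|\N^d\setminus S|$ is. Your write-up fills in the details the paper omits---in particular the identification of the spanning coordinate directions with the complement of $\Axes(S)$, and the equality $\N^r\setminus\overline{S}=H(S)$ (hence $g(\overline{S})=g(S)$), which the paper uses later without comment.
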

\begin{proof}
Proposition~\ref{prop:Coord} shows that $\mathrm{Span}_{\mathbb{R}}(H(S))\cap\N^d\cong \N^r$ (by abuse of notation we refer to $\mathrm{Span}_{\mathbb{R}}(H(S))\cap\N^d$ as $\N^r$).  Since $|\N^d\setminus S|$ is finite, so is $|\N^r\setminus\overline{S}|$.  Hence $\overline{S}$ is a generalized numerical semigroup in $\N^r$.
\end{proof}

\begin{lemma}\label{lem:Iterative1Thick}
The following are equivalent:
\begin{enumerate}
\item $S\in S^{(r)}_{g,d}$
\item There is some $S'\in S^{(r)}_{g,r}\subset \N^r$ so that $S=\kthick{0}(S',\Axes(S))$.
\end{enumerate}
\end{lemma}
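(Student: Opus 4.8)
The plan is to prove both implications by taking the candidate semigroup $S'$ to be $\overline{S}=\N^r\cap S$ from Definition~\ref{definizione}, and by first reducing everything to a single explicit membership criterion for iterated $0$-thickening. Recall that, as established just above using Proposition~\ref{prop:Coord}, the complement of $\Axes(S)$ in $\{1,\ldots,d\}$ has exactly $r$ elements and $\overline{S}$ is a GNS of $\mathrm{Span}_{\mathbb{R}}(H(S))\cap\N^d\cong\N^r$; I will take this bookkeeping for granted.

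First I would unwind Definition~\ref{def:IterateThick} into the following criterion. Let $A=\{j_1,\ldots,j_t\}$ be a set of thickening directions, let $\N^{d-t}\subset\N^d$ be the coordinate submonoid generated by the complementary directions, and let $S'\subset\N^{d-t}$ be a semigroup. Then $\textbf{x}\in\kthick{0}(S',j_1,\ldots,j_t)$ if and only if either $x^{(k)}>0$ for some $k\in A$, or $x^{(k)}=0$ for all $k\in A$ and $\textbf{x}\in S'$. This follows by induction on $t$ from the base case $\kthick{0}(T,j)=T\cup(\be_j+\N^{n+1})$ (where $T\subset\N^n$), using that order of thickening is irrelevant. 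Negating the criterion shows immediately that $H(\kthick{0}(S',A))=H(S')$ under the coordinate embedding; in particular $0$-thickening preserves both the genus and the $\mathbb{R}$-span of the hole set.

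For the implication $(1)\Rightarrow(2)$ I take $S'=\overline{S}$. By definition of $\Axes(S)$, every hole of $S$ has vanishing coordinate in each axis direction, so $H(S)\subseteq\N^r$ and hence $H(\overline{S})=\N^r\setminus\overline{S}=\N^r\setminus S=H(S)$. Therefore $\overline{S}$ has genus $g$ and $\dim(\mathrm{Span}_{\mathbb{R}}(H(\overline{S})))=\dim(\mathrm{Span}_{\mathbb{R}}(H(S)))=r$, so $\overline{S}\in S^{(r)}_{g,r}$. The identity $S=\kthick{0}(\overline{S},\Axes(S))$ now follows directly from the membership criterion: if $\textbf{x}$ has a positive axis coordinate it cannot be a hole (all holes are supported off the axes), so $\textbf{x}\in S$; and if all axis coordinates of $\textbf{x}$ vanish then $\textbf{x}\in\N^r$ and $\textbf{x}\in S\iff\textbf{x}\in\N^r\cap S=\overline{S}$. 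For the converse $(2)\Rightarrow(1)$, suppose $S=\kthick{0}(S',\Axes(S))$ with $S'\in S^{(r)}_{g,r}$. The criterion gives $H(S)=H(S')$ (embedded), whence $|H(S)|=g$ and $\dim(\mathrm{Span}_{\mathbb{R}}(H(S)))=\dim(\mathrm{Span}_{\mathbb{R}}(H(S')))=r$, so $S\in S^{(r)}_{g,d}$.

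I expect the only genuinely subtle point to be the dimension bookkeeping already absorbed from the preceding lemma: that the span of the holes being a \emph{coordinate} linear space (Proposition~\ref{prop:Coord}) is precisely what forces the number of non-axis directions to equal $r$, and thus pins $\Axes(S)$ down as exactly the set of directions along which the thickening is performed. Once the explicit membership criterion and the equality $H(\kthick{0}(S',A))=H(S')$ are in hand, both implications reduce to short, formal verifications.
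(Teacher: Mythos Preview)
Your proof is correct and follows essentially the same approach as the paper: take $S'=\overline{S}$, use that $0$-thickening preserves the hole set (hence genus and span dimension), and read off both implications. You have simply made explicit the membership criterion for iterated $0$-thickenings that the paper uses implicitly when it writes $S=\overline{S}\cup(\be_{j_1}+\N^{r+1})\cup\cdots\cup(\be_{j_{d-r}}+\N^d)$.
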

\begin{proof}
(1)$\Rightarrow$(2): Suppose that $\mathrm{Span}_{\mathbb{R}}(H(S))=\mathrm{Span}\{\be_{i_1},\ldots,\be_{i_r}\}$ and $\Axes(S)=\{j_1,\ldots,j_{d-r}\}$.  Then $\overline{S}\in S^{(r)}_{g,r}$ and $S=\overline{S}\cup (\be_{j_1}+\N^{r+1})\cup (\be_{j_2}+\N^{r+2})\cup \cdots\cup (\be_{j_{d-r}}+\N^d)=\kthick{0}(\overline{S},\Axes(S))$.

(2)$\Rightarrow$(1): If $S=\kthick{0}(S',\Axes(S))$ for some $S'\in S^{(r)}_{g,r}$, then $\dim(\mathrm{Span}_{\mathbb{R}}(H(S)))=\dim(\mathrm{Span}_{\mathbb{R}}(H(S')))=r$.  Since $0$-thickenings do not effect genus, $S\in S^{(r)}_{g,d}$.
\end{proof}

\begin{exa} \rm Let  $S=\mathbb{N}^{5}\setminus \{(0,0,0,1,0),(0,0,0,2,0),(0,1,0,0,0),$ $(0,1,0,3,0)\}$. The set of minimal generators of $S$ is $G(S)=\{(1,0,0,0,0),(0,0,1,0,0),(0,0,0,0,1),(1,0,0,1,0),(0,1,0,1,0),(0,0,1,1,0),\newline(0,0,0,1,1),(0,0,0,3,0),(1,0,0,2,0),(0,1,0,2,0),(0,0,1,2,0),(0,0,0,2,1),\newline(0,0,0,5,0),(0,0,0,4,0),(1,1,0,0,0),(0,1,1,0,0),(0,1,0,0,1),(0,2,0,0,0),\newline(0,2,0,1,0),(0,3,0,0,0)\}$.  Furthermore $e(S)=20$ and $g(S)=4$.  In this case $\Axes(S)=\{1,3,5\}$ and $i_{1}=2,i_{2}=4$. With the previous construction we have $\overline{S}=\mathbb{N}^{2}\setminus \{(0,1),(0,2),(1,0),(1,3)\}$. The set of minimal generators of $\overline{S}$ is $G(\overline{S})=\{(1,1),(0,3),(1,2),(0,5),(0,4),(2,1),(2,0),(3,0)\}$. So $e(\overline{S})=8$.
Notice that $M(\overline{S})=\{(0,0),(0,1),(0,2),(1,0)\}$ and $m(S)=4$.  If we iterate Proposition~\ref{prop:MingensOfThickenings} three times, we see how $G(S)$ is obtained from $G(\overline{S})$.
\end{exa}

\begin{coro}
Let $S\in S_{g,d}^{(r)}$ and suppose that $\overline{S}\in S_{g,r}^{(r)}$ satisfies \genwilf{}. Then $S$ satisfies \genwilf{}.  Moreover, if $\overline{S}$ has minimal multiplicity and satisfies \genwilf{} with equality, then so does $S$. 
\label{WilfProj}\end{coro}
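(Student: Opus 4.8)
The plan is to realize $S$ as an iterated $0$-thickening of $\overline{S}$ and then raise the Generalized Wilf inequality one dimension at a time by repeated application of Proposition~\ref{prop:ThickeningAndWilf}. First I would invoke Lemma~\ref{lem:Iterative1Thick}: since $S\in S^{(r)}_{g,d}$, we may write $S=\kthick{0}(\overline{S},\Axes(S))$ with $\overline{S}\in S^{(r)}_{g,r}\subset\N^r$ and $|\Axes(S)|=d-r$. Writing $\Axes(S)=\{j_1,\ldots,j_{d-r}\}$, this presents $S$ as the final term $S_{d-r}$ of the iterated sequence $\overline{S}=S_0$, $S_{\ell+1}=\kthick{0}(S_\ell,j_{\ell+1})$ of Definition~\ref{def:IterateThick}, where $S_\ell\subset\N^{r+\ell}$. (If $d=r$ there is nothing to thicken and both assertions are exactly the hypotheses on $\overline{S}$.)

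For the inequality I would apply Proposition~\ref{prop:ThickeningAndWilf} exactly $d-r$ times. By hypothesis $\overline{S}=S_0$ satisfies \genwilf{}, i.e. $r\,c(S_0)\le n(S_0)e(S_0)$. Applying Proposition~\ref{prop:ThickeningAndWilf} to $S_\ell\subset\N^{r+\ell}$ (so that the ambient dimension playing the role of $d$ is $r+\ell$) shows that $S_{\ell+1}$ again satisfies \genwilf{}, namely $(r+\ell+1)\,c(S_{\ell+1})\le n(S_{\ell+1})e(S_{\ell+1})$. After the $(d-r)$-th step we obtain $d\,c(S)\le n(S)e(S)$, which is \genwilf{} for $S$.

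For the moreover statement the delicate point is that Proposition~\ref{prop:ThickeningAndWilf} propagates \emph{equality} only under the standing hypothesis of minimal multiplicity; hence to chain the equality clause through all $d-r$ steps I must first check that $0$-thickening preserves minimal multiplicity. The key observation, which I would isolate as a short lemma, is that $m(\kthick{0}(T,i))=m(T)$ for every GNS $T$. Indeed, by Proposition~\ref{prop:MingensOfThickenings} the vector $\be_i$ is a minimal generator of $\kthick{0}(T,i)$, so any fundamental hole $\textbf{x}$ must satisfy $\be_i\not\le\textbf{x}$, i.e. $x^{(i)}=0$; and for such $\textbf{x}$ lying in the hyperplane $x_i=0$ the elements of $\kthick{0}(T,i)$ below $\textbf{x}$ are precisely the elements of $T$ below $\textbf{x}$ (as in Definition~\ref{def:ModuleGens}). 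Thus the fundamental holes of $\kthick{0}(T,i)$ coincide with those of $T$, giving $m(\kthick{0}(T,i))=m(T)$. Combining this with $c(\kthick{0}(T,i))=c(T)$ and $n(\kthick{0}(T,i))=n(T)$ from Corollary~\ref{coro:thickstats} (the case $k=0$), the relation $c=mn$ is inherited by each thickening. Consequently, if $\overline{S}$ has minimal multiplicity, every $S_\ell$ does as well, so the moreover clause of Proposition~\ref{prop:ThickeningAndWilf} applies at each step and yields $d\,c(S)=n(S)e(S)$ by induction. I expect the main obstacle to be exactly this bookkeeping: establishing that minimal multiplicity descends along the thickenings so that the equality clause can be iterated, rather than any individual computation.
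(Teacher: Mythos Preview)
Your proposal is correct and follows the same route as the paper, which simply says the corollary is ``immediate from Lemma~\ref{lem:Iterative1Thick} and Proposition~\ref{prop:ThickeningAndWilf}.'' In fact you are more careful than the paper on the \emph{moreover} clause: you correctly observe that iterating the equality statement of Proposition~\ref{prop:ThickeningAndWilf} requires each intermediate $S_\ell$ to have minimal multiplicity, and you supply the short argument that $0$-thickening preserves $m(\cdot)$ (hence, via Corollary~\ref{coro:thickstats} with $k=0$, preserves the relation $c=mn$). The paper leaves this bookkeeping implicit.
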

\begin{proof}

This is immediate from Lemma~\ref{lem:Iterative1Thick} and Proposition~\ref{prop:ThickeningAndWilf}.\qedhere

 
\end{proof}

\section{\genwilf{} for Irreducible GNS}

Proposition~\ref{w-sym} shows that all symmetric generalized numerical semigroups satisfy \genwilf{}. Now we show that actually this occurs for all irreducible GNS. The proof of the conjecture for pseudo-symmetric GNS requires some preliminary results and Corollary~\ref{WilfProj}.

\begin{lemma} Let $S\subseteq \mathbb{N}^{d}$ be an irreducible GNS such that $e(S)=2d$. Then $S$ is symmetric. \label{e=2d} \end{lemma}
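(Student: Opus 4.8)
We want to show: if $S \subseteq \mathbb{N}^d$ is an irreducible GNS with $e(S) = 2d$ minimal generators, then $S$ is symmetric.

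**Available tools.** From the excerpt:
- Theorem `irr_formula`: $S$ irreducible $\iff |EH(S)| = 1$. And symmetric/pseudo-symmetric characterizations via $2|H(S)| = \prod(f^{(i)}+1)$ or $2|H(S)| - 1 = \prod(f^{(i)}+1)$.
- Irreducible GNS are either symmetric or pseudo-symmetric (this is the dichotomy — since $|EH(S)|=1$ means one special gap).
- Theorem `sym`: symmetric $\iff |PF(S)| = 1$.
- Theorem `p-sym`: pseudo-symmetric $\iff PF(S) = \{f, f/2\}$, i.e. $|PF(S)| = 2$.
- The bound $e(S) \geq 2d$ from [G.G.2016, Theorem 11].

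**The key idea.** Irreducible = symmetric OR pseudo-symmetric. So to prove "irreducible + $e=2d$ ⟹ symmetric," I should show pseudo-symmetric forces $e(S) > 2d$, i.e. rule out pseudo-symmetry when $e = 2d$.

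Let me think about this more carefully.

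The plan is to use the dichotomy that an irreducible GNS is either symmetric or pseudo-symmetric (\cite{Work1}; cf. Theorems~\ref{sym} and~\ref{p-sym}), so that proving the lemma amounts to ruling out the pseudo-symmetric case when $e(S)=2d$. Equivalently, I would show that a pseudo-symmetric GNS must satisfy $e(S)\geq 2d+1$. As a first clean ingredient, I would record that pseudo-symmetry forces $m(S)\geq 3$: if $S$ is pseudo-symmetric with Frobenius element $\textbf{f}$, then by Theorem~\ref{irr_formula} we have $c(S)=(f^{(1)}+1)\cdots(f^{(d)}+1)=2|H(S)|-1$, and since $c(S)=|H(S)|+n(S)$ this gives $c(S)=2n(S)+1$ with $n(S)=|H(S)|-1\geq 1$ (as $\textbf{f}$ and $\textbf{f}/2$ are distinct holes). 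Feeding this into Lemma~\ref{lem:MultiplicityInequality} ($c(S)\leq m(S)n(S)$) yields $2n(S)+1\leq m(S)n(S)$, hence $m(S)\geq 3$.

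Next I would reduce to the case where the holes span $\mathbb{R}^d$. Writing $r=\dim(\mathrm{Span}_{\mathbb{R}}(H(S)))$, Lemma~\ref{lem:Iterative1Thick} expresses $S$ as a $0$-thickening $\kthick{0}(\overline{S},\Axes(S))$ of the full-dimensional GNS $\overline{S}\in S_{g,r}^{(r)}$. Since a $0$-thickening preserves the set of fundamental holes (they all lie in the coordinate slice containing $\overline{S}$), the multiplicity is unchanged at each step, while Proposition~\ref{prop:MingensOfThickenings} shows $e$ increases by $m(\overline{S})$; iterating $d-r$ times gives $e(S)=e(\overline{S})+(d-r)\,m(\overline{S})$. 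Restricting the condition of Theorem~\ref{p-sym} to $\N^r$ shows $\overline{S}$ is again pseudo-symmetric, so $m(\overline{S})\geq 3$ by the previous paragraph. Combining with the bound $e(\overline{S})\geq 2r$ of~\cite[Theorem~11]{G.G.2016} gives $e(S)\geq 2r+3(d-r)=3d-r$, which is $\geq 2d+1$ whenever $r\leq d-1$. Thus only the full-dimensional case $r=d$ remains.

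The hard part is exactly this full-dimensional case: showing that a pseudo-symmetric GNS whose holes span $\mathbb{R}^d$ has at least $2d+1$ minimal generators. Here I cannot simply invoke $m(S)\geq 3$, since symmetric semigroups (already $\langle 4,5\rangle$ in dimension one) can have large multiplicity while attaining $e(S)=2d$; the extra generator must instead be extracted from the pseudo-symmetric structure itself. My approach would be to bound generators axis-by-axis, using that the restriction $S_i=\{k\in\N\mid k\be_i\in S\}$ is a numerical semigroup whose minimal generators inject into $G(S)$ (so $e(S)\geq \sum_i e(S_i)$), and then to locate additional off-axis minimal generators forced by the special hole $\textbf{f}/2$ together with the duality $\textbf{f}-\textbf{h}\in S$ for $\textbf{h}\in H(S)\setminus\{\textbf{f}/2\}$ guaranteed by Theorem~\ref{p-sym}. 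The crux, and the step I expect to be most delicate, is to pin down the equality case $e(S)=2d$ precisely enough to see that the all-even Frobenius element of a pseudo-symmetric semigroup is incompatible with it, thereby producing the missing $(2d+1)$-st generator and completing the contradiction.
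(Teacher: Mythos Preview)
Your proposal takes a genuinely different route from the paper, but it leaves the decisive step open.

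The paper does not argue via the symmetric/pseudo-symmetric dichotomy at all. Instead it invokes the explicit classification from \cite[Theorem~2.8]{Analele} of all GNS with $e(S)=2d$: any such $S$ is generated by the unit vectors $\be_j$ for $j\neq i$, two coprime multiples $a\be_i,b\be_i$ on one distinguished axis, and elements $\be_i+h^{(j)}\be_j$ for $j\neq i$. With this structure in hand the paper checks two cases directly: if $a=2$ it computes $H(S)$ and verifies the symmetric count $2|H(S)|=\prod(f^{(j)}+1)$; if $a>2$ it exhibits two incomparable maximal holes, so $S$ is not Frobenius and hence not irreducible. Pseudo-symmetry never enters.

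Your reduction to the full-dimensional case is correct and pleasant: the multiplicity bound $m(S)\ge 3$ for pseudo-symmetric $S$ follows cleanly from $c(S)=2n(S)+1$ and Lemma~\ref{lem:MultiplicityInequality}, the $0$-thickening does preserve multiplicity, and the arithmetic $e(S)\ge 2r+3(d-r)\ge 2d+1$ for $r<d$ is fine. But in the full-dimensional case $r=d$ you have no argument. Knowing $m(S)\ge 3$ does not by itself force $e(S)\ge 2d+1$ (as you yourself observe, symmetric semigroups can combine large multiplicity with $e=2d$), and your sketch of ``axis-by-axis counting plus off-axis generators forced by $\textbf{f}/2$'' is a plan, not a proof. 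What actually closes the gap is precisely the structural information you are missing: the classification of $e(S)=2d$ semigroups. Once one has it, the all-even Frobenius element required by pseudo-symmetry forces $a>2$ on the distinguished axis, and then a direct construction produces two incomparable maximal holes, contradicting irreducibility. Without that classification (or an equivalent structural input), your argument does not finish.
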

\begin{proof}
If $e(S)=2d$ then by \cite[Theorem 2.8]{Analele} it follows that $S=\langle A\rangle$ with $A=\{\textbf{e}_{1},\ldots,\textbf{e}_{i-1},\textbf{e}_{i+1},\ldots,\textbf{e}_{d}, a\textbf{e}_{i},b\textbf{e}_{i}\mid i\in \{1,\ldots,d\}, 1<a<b \in \mathbb{N}\setminus \{0\}, \mathrm{GCD}(a,b)=1\}\cup \{\textbf{e}_{i}+h^{(j)}\textbf{e}_{j}\mid j\in \{1,\ldots,d\}\setminus \{i\}, h^{(j)}\in \mathbb{N}\setminus \{0\}\}$. Observe that $a$ and $b$ generate a numerical semigroup in the $i$-th axis. We distinguish two cases:\\
1) $a=2$. In such a case $H(\langle 2,b\rangle)=\{1,3,5,\ldots,b-2\}$ and by a simple argument we see that $H(S)$ is the set:
\[
\left\lbrace h\textbf{e}_{i}+\sum_{j\neq i}l_{j}\textbf{e}_{j} \mid h\in H(\langle 2,b\rangle), l_{j}\in \{0,\ldots,h^{(j)}-1\}, j\in \{1,\ldots,d\}\setminus \{i\}\right\rbrace.
\] 
\noindent Moreover $S$ is a Frobenius GNS with Frobenius element $\textbf{f}=(b-2)\textbf{e}_{i}+\sum_{j\neq i}(h^{(j)}-1)\textbf{e}_{j}$ and genus $g(S)=\frac{b-1}{2}\prod_{j\neq i}h^{(j)}$. By Theorem~\ref{rel}, $S$ is symmetric.\\
2) $a> 2$. In such a case we show that $S$ is not a Frobenius GNS, so it is not irreducible. This will prove the claim of this lemma. Let $F=ab-a-b$ be the Frobenius number of $\langle a ,b\rangle$ and consider the element $\textbf{h}=F\textbf{e}_{i}+\sum_{j\neq i}(h^{(j)}-1)\textbf{e}_{j}$. We show that $\textbf{h}$ is a maximal element in $H(S)$ with respect to the natural partial order in $\mathbb{N}^{d}$. First we prove that $\textbf{h}\in H(S)$. If not, $\textbf{h}\in\langle A\rangle $ and since $\textbf{h}-(\textbf{e}_{i}+h^{(j)}\textbf{e}_{j})\notin \mathbb{N}^{d}$ for all $j\in \{1,\ldots,d\}\setminus \{i\}$, then $\textbf{h}=\lambda_{1}a\textbf{e}_{i}+\lambda_{2}b\textbf{e}_{i}+\sum_{j\neq i}\mu_{j}\textbf{e}_{j}$, with $\lambda_{1},\lambda_{2},\mu_{j}\in \mathbb{N}$. But this implies $F=\lambda_{1}a+\lambda_{2}b$ that is a contradiction. So $\textbf{h}\in H(S)$, in order to prove that it is a maximal hole it suffices to prove that $\textbf{h}+\textbf{e}_{k}\in S$ for all $k\in \{1,\ldots,d\}$. It is obvious that $\textbf{h}+\textbf{e}_{i}\in S$. So let $k\neq i$, then $\textbf{h}+\textbf{e}_{k}=(F-1)\textbf{e}_{i}+\sum_{j\neq i,k}(h^{(j)}-1)\textbf{e}_{k}+\textbf{e}_{i}+h^{(k)}\textbf{e}_{k}$. Since $\langle a,b\rangle$ is a symmetric numerical semigroup, $F-1\in \langle a,b \rangle$, hence $(F-1)\textbf{e}_{i}\in S$. Therefore $\textbf{h}+\textbf{e}_{k}\in S$ and $\textbf{h}$ is maximal in $H(S)$. It remains to prove that there exists an element in $H(S)$ not comparable with $\textbf{h}$. Consider $\textbf{x}=2\textbf{e}_{i}+h^{(k)}\textbf{e}_{k}$ with $k\neq i$. Obviously $\textbf{x}\nleq \textbf{h}$, moreover one can see by a simple argument that $\textbf{x}\in H(S)$. This concludes the proof.
\end{proof}

\begin{rmk} \rm
The proof of the previous Lemma shows actually a stronger result: If $S\subseteq \mathbb{N}^{d}$ is a GNS with $e(S)=2d$, then $S$ is Frobenius if and only if $S$ is symmetric.

\end{rmk}

For the claim and the proof of the following Lemma we use the same notation of the previous section.

\begin{lemma} Let $S\in S_{g,d}^{(r)}$ and $\overline{S}\in S_{g,r}^{(r)}$ be as in \emph{Definition~\ref{definizione}}. Then the following hold:
\begin{enumerate}
\item If $S$ is symmetric then $\overline{S}$ is symmetric.
\item If $S$ is pesudo-symmetric then $\overline{S}$ is pseudo-symmetric.
\end{enumerate}
\label{lemma1}\end{lemma}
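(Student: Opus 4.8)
The plan is to reduce both statements to the numerical characterizations of symmetry and pseudo-symmetry recorded in Theorem~\ref{irr_formula}, namely that a GNS in $\N^r$ is symmetric (resp.\ pseudo-symmetric) precisely when some hole $\textbf{f}$ satisfies $2|H|=\prod(f^{(j)}+1)$ (resp.\ $2|H|-1=\prod(f^{(j)}+1)$). The essential point is that passing from $S$ to $\overline{S}$ deletes only the axis directions in $\Axes(S)$, along which every hole has a zero coordinate, so neither the genus nor the relevant product is altered.

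First I would pin down the holes. By Proposition~\ref{prop:Coord}, $\mathrm{Span}_{\mathbb{R}}(H(S))$ is the coordinate space $\mathrm{Span}\{\be_{i_1},\ldots,\be_{i_r}\}$, so every $\textbf{h}\in H(S)$ has $h^{(k)}=0$ for $k\in\Axes(S)$; hence $H(S)\subseteq\N^r$ under the identification of Definition~\ref{definizione}. Combined with $\overline{S}=\N^r\cap S$, this yields
\[
H(\overline{S})=\N^r\setminus\overline{S}=\N^r\setminus(\N^r\cap S)=\N^r\cap H(S)=H(S).
\]
In particular $|H(\overline{S})|=|H(S)|=g$, consistent with $\overline{S}\in S_{g,r}^{(r)}$.

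Next I would transport the witnessing Frobenius element. Suppose $S$ is symmetric; by Theorem~\ref{rel} there is $\textbf{f}\in H(S)$ with $2g=\prod_{k=1}^d(f^{(k)}+1)$. Since $\textbf{f}\in H(S)\subseteq\N^r$, we have $f^{(k)}=0$ for $k\in\Axes(S)$, so each factor indexed by $\Axes(S)$ equals $1$ and drops out:
\[
\prod_{k=1}^d(f^{(k)}+1)=\prod_{j=1}^r\bigl(f^{(i_j)}+1\bigr).
\]
Writing $\overline{\textbf{f}}=(f^{(i_1)},\ldots,f^{(i_r)})\in H(\overline{S})$, the displayed equalities give $2|H(\overline{S})|=\prod_{j=1}^r(\overline{f}^{(j)}+1)$, so Theorem~\ref{rel} applied to $\overline{S}\subseteq\N^r$ shows $\overline{S}$ is symmetric. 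The pseudo-symmetric case is identical, replacing $2g$ by $2g-1$ and invoking Theorem~\ref{p-rel}.

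I do not expect a genuine obstacle here: once $H(\overline{S})=H(S)$ is established, everything reduces to noting that the trivial factors $(f^{(k)}+1)=1$ from the axis directions do not affect the product. (Alternatively, one can argue directly from Theorems~\ref{sym} and \ref{p-sym}: the same $\textbf{f}$ works for $\overline{S}$, because for any $\textbf{h}\in H(\overline{S})=H(S)$ the difference $\textbf{f}-\textbf{h}$ lies in $S$ and, having zero coordinates on $\Axes(S)$, actually lies in $\N^r\cap S=\overline{S}$.) The only care needed is to confirm that in the pseudo-symmetric case the exceptional hole $\textbf{f}/2$ and the parity of the components are preserved under the identification, which is immediate since $\textbf{f}$ and $\textbf{f}/2$ already lie in $\N^r$.
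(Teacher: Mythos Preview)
Your argument is correct and follows essentially the same route as the paper: identify $H(\overline{S})$ with $H(S)$ so that the genus is preserved, observe that the Frobenius element $\textbf{f}$ has zero coordinates along $\Axes(S)$ so the product $\prod_{k=1}^d(f^{(k)}+1)$ collapses to $\prod_{j=1}^r(f^{(i_j)}+1)$, and then apply the numerical criteria of Theorem~\ref{irr_formula}. Your write-up is in fact more explicit about why $H(\overline{S})=H(S)$, and the alternative via Theorems~\ref{sym} and~\ref{p-sym} that you sketch at the end is a perfectly good variant, but the core idea is the same.
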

\begin{proof}
Let $\{1,2,\ldots,d\}\setminus \Axes(S)=\{i_{1},i_{2},\ldots,i_{r}\}$. Suppose $S$ is symmetric or pseudo-symmetric and let $\textbf{f}=(f^{(1)},\ldots,f^{(d)})$ be the Frobenius element of $S$. Then $\prod_{i=1}^{d}(f^{(i)}+1)=\prod_{k=1}^{r}(f^{(i_{k})}+1)$ since for $j\in \Axes(S)$ we have $f^{(j)}+1=1$. But $\overline{\textbf{f}}=(f^{(i_{1})},\ldots,f^{(i_{r})})$ is the Frobenius element of $\overline{S}$. So both the statements follow easily from Theorem~\ref{irr_formula} and the fact that $g(S)=g(\overline{S})$.
\end{proof}

\begin{lemma} Let $S\subseteq \mathbb{N}^{d}$ be a GNS. Then the following hold:
\begin{enumerate}
\item If $g(S)<d$ then $S\in S_{g,d}^{(r)}$ for some $r<d$. In particular $g(S)\geq r$.
\item If $g(S)=d$ and $S\in S_{g,d}^{(d)}$ then $S$ is not pseudo-symmetric.
\end{enumerate}
\label{lemma2}\end{lemma}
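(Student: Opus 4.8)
The plan is to handle the two parts separately: part (1) reduces to a dimension count, while part (2) reduces to a parity-and-size argument on the Frobenius element, carried out by contradiction.

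For part (1), I would simply observe that $r=\dim(\mathrm{Span}_{\mathbb{R}}(H(S)))$ is the dimension of the real span of a set of exactly $g(S)$ vectors, so $r\le g(S)$ always (the span of $g(S)$ vectors has dimension at most $g(S)$). This is precisely the auxiliary inequality $g(S)\ge r$. Then, under the hypothesis $g(S)<d$, we get $r\le g(S)<d$, hence $r<d$ and $S\in S_{g,d}^{(r)}$ by the very definition of $S_{g,d}^{(r)}$. This part is essentially immediate.

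For part (2), suppose toward a contradiction that $g(S)=d$, that $S\in S_{g,d}^{(d)}$, and that $S$ is pseudo-symmetric with Frobenius element $\textbf{f}$, which by Theorem~\ref{irr_formula} is the unique maximal hole. The first key point is that $r=d$ forces every coordinate direction to be used by the holes: if some index $k$ had $h^{(k)}=0$ for all $\textbf{h}\in H(S)$ (i.e. $k\in\Axes(S)$), then all holes would lie in the coordinate hyperplane $x_k=0$ and $r\le d-1$, contradicting $r=d$; this is consistent with Proposition~\ref{spaziocoord} describing $\mathrm{Span}_{\mathbb{R}}(H(S))$ as a coordinate linear space. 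Hence $\Axes(S)=\emptyset$, so for each $i$ some hole has positive $i$-th coordinate, and since $\textbf{f}$ dominates every hole we get $f^{(i)}>0$ for all $i$. The second key point is parity: by Theorem~\ref{p-sym}, pseudo-symmetry forces $\textbf{f}$ to have even components, so in fact $f^{(i)}\ge 2$, and therefore $f^{(i)}+1\ge 3$ for every $i$.

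Finally I would invoke the arithmetic characterization of pseudo-symmetry in Theorem~\ref{p-rel}, namely $2g(S)-1=\prod_{i=1}^d(f^{(i)}+1)$. Substituting $g(S)=d$ gives $2d-1=\prod_{i=1}^d(f^{(i)}+1)\ge 3^d$, which is impossible since $3^d>2d-1$ for all $d\ge 1$; this contradiction proves that $S$ is not pseudo-symmetric. The only real subtlety is recognizing that the hypothesis $r=d$ is exactly what is needed to push each component of $\textbf{f}$ up to at least $2$ (via $\Axes(S)=\emptyset$ and maximality of $\textbf{f}$); once this is combined with the evenness from Theorem~\ref{p-sym} and the product identity from Theorem~\ref{p-rel}, the size comparison $3^d>2d-1$ closes the argument.
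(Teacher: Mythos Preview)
Your proof is correct. Part (1) matches the paper exactly.

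For part (2), you take a genuinely different route from the paper. The paper observes directly that pseudo-symmetry forces $\textbf{f}/2$ and $\textbf{f}$ both to lie in $H(S)$; since these two holes are linearly dependent, a set of $d$ holes containing them can span at most a $(d-1)$-dimensional space, contradicting $r=d$. Your argument instead extracts from $r=d$ that every coordinate of $\textbf{f}$ is positive, combines this with evenness to get $f^{(i)}+1\ge 3$, and then plugs into the product identity $2g(S)-1=\prod_i(f^{(i)}+1)$ to reach the numerical absurdity $2d-1\ge 3^d$. Both arguments hinge on the same structural fact (pseudo-symmetry forces the holes to be ``too constrained'' to fill $d$ dimensions with only $d$ of them), but the paper's linear-dependence observation is shorter and avoids the arithmetic characterization, while your approach has the mild advantage of yielding a quantitative gap ($3^d$ versus $2d-1$) rather than a bare contradiction.
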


\begin{proof}
The first statement is quite easy, considering that a vector space of dimension $r$ is spanned by exactly $r$ independent vectors. To prove the second statement, suppose that $(S,\mathbf{f})$ is a pseudo-symmetric GNS.  Then $\mathbf{f}/2,\mathbf{f}\in H(S)$, so $S$ must have at least $d+1$ holes to have $d$ linearly independent holes.  It follows that if $g(S)=d$ then $S$ is not pseudo-symmetric.\qedhere

\end{proof}

\begin{thm} Let $S\subseteq \mathbb{N}^{d}$ be a pseudo-symmetric GNS. Then $S$ satisfies \genwilf{}. \end{thm}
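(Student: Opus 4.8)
The plan is to reduce the pseudo-symmetric case to the full-support case and then establish \genwilf{} directly there, mirroring the structure already used for the symmetric case in Proposition~\ref{w-sym}. First I would invoke Corollary~\ref{WilfProj}: by that corollary, it suffices to prove the conjecture for $\overline{S}\in S^{(r)}_{g,r}$, since $S=\kthick{0}(\overline{S},\Axes(S))$ and thickenings preserve \genwilf{}. By Lemma~\ref{lemma1}(2), $\overline{S}$ is again pseudo-symmetric, so I may assume from the outset that $S=\overline{S}\in S^{(d)}_{g,d}$, i.e.\ the holes span all of $\mathbb{R}^d$. Thus the real content is to prove \genwilf{} for a pseudo-symmetric GNS whose hole set has full-dimensional span.

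\smallskip

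For such an $S$ with Frobenius element $\textbf{f}$, the defining identity from Theorem~\ref{irr_formula}(3) is $2|H(S)|-1=(f^{(1)}+1)\cdots(f^{(d)}+1)=|C(\textbf{f})|=c(S)$. Because $(S,\textbf{f})$ is Frobenius, $c(S)=|C(\textbf{f})|=|N(\textbf{f})|+|H(\textbf{f})|=n(S)+g(S)$, so combining these gives $2g(S)-1=n(S)+g(S)$, hence $n(S)=g(S)-1$. The next step is the arithmetic: I want $e(S)\,n(S)\ge d\,c(S)=d(2g(S)-1)$, i.e.\ $e(S)(g(S)-1)\ge 2dg(S)-d$. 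If I can show $e(S)\ge 2d+1$ in this full-support pseudo-symmetric setting, then $e(S)(g(S)-1)\ge (2d+1)(g(S)-1)=2dg(S)-2d+g(S)-1=2dg(S)-d+(g(S)-d-1)$, which is at least $2dg(S)-d=d\,c(S)$ provided $g(S)\ge d+1$. Lemma~\ref{lemma2}(1),(2) supply exactly this: since $S\in S^{(d)}_{g,d}$ its genus is at least $d$, and genus $d$ is excluded for pseudo-symmetric $S$ by Lemma~\ref{lemma2}(2), forcing $g(S)\ge d+1$.

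\smallskip

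The main obstacle is therefore establishing the sharpened bound $e(S)\ge 2d+1$ for a pseudo-symmetric GNS with full-dimensional hole span. The general bound $e(S)\ge 2d$ from~\cite[Theorem~11]{G.G.2016} is not enough, so I need to rule out $e(S)=2d$. Here Lemma~\ref{e=2d} is the key input: an irreducible GNS (and pseudo-symmetric GNS are irreducible by Theorem~\ref{irr_formula}(1)) with $e(S)=2d$ is necessarily \emph{symmetric}, not pseudo-symmetric. Since symmetric and pseudo-symmetric are mutually exclusive (their pseudo-Frobenius sets have sizes $1$ and $2$ respectively by Theorems~\ref{sym} and~\ref{p-sym}), a pseudo-symmetric GNS cannot have $e(S)=2d$, and combined with $e(S)\ge 2d$ this yields $e(S)\ge 2d+1$. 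Assembling the pieces completes the argument.

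\smallskip

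To summarize the intended flow: reduce to full support via Corollary~\ref{WilfProj} and Lemma~\ref{lemma1}(2); derive $n(S)=g(S)-1$ and $c(S)=2g(S)-1$ from the Frobenius property and Theorem~\ref{irr_formula}(3); obtain $g(S)\ge d+1$ from Lemma~\ref{lemma2}; obtain $e(S)\ge 2d+1$ by ruling out $e(S)=2d$ through Lemma~\ref{e=2d}; and conclude with the elementary inequality $e(S)(g(S)-1)\ge (2d+1)(g(S)-1)\ge d(2g(S)-1)$. I expect the reduction and the counting identities to be routine, with the only genuinely delicate point being the careful use of Lemma~\ref{e=2d} to exclude the extremal value of $e(S)$.
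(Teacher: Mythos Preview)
Your proposal is correct and uses essentially the same approach as the paper: the same ingredients (Theorem~\ref{irr_formula}(3), Lemma~\ref{e=2d}, Lemma~\ref{lemma1}(2), Lemma~\ref{lemma2}, Corollary~\ref{WilfProj}) and the same arithmetic inequality $(2d+1)(g-1)\ge d(2g-1)$ under $g\ge d+1$. The only difference is organizational---you reduce to the full-support case first and then run the argument once, whereas the paper first handles $g\ge d+1$ directly and then passes to $\overline{S}$ for the remaining case; your ordering is slightly cleaner, and your derivation of $n(S)=g(S)-1$ via $c(S)=n(S)+g(S)=2g(S)-1$ is a bit more direct than the paper's use of the map $\Psi_{\textbf{f}}$.
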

\begin{proof}
Let $g=g(S)$. We know that $S$ has Frobenius element $\textbf{f}=(f^{(1)},\ldots,f^{(d)})$ and, by Theorem~\ref{p-rel} (2), we have $2g-1=(f^{(1)}+1)\cdots (f^{(d)}+1)=c(S)$. So it suffices to prove that $e(S)n(S)\geq d(2g-1)$.  If $S$ is pseudo-symmetric then, by the map $\Psi_{\textbf{f}}$, $g-1=|H(\textbf{f})|-1=|N(\textbf{f})|=n(S)$. Furthermore $e(S)\geq 2d+1$ by Lemma~\ref{e=2d}. So $e(S)n(S)\geq (2d+1)(g-1)=d(2g-1)+g-(d+1)$, in particular if $g\geq d+1$ we conclude. Now consider that $S\in S_{g,d}^{(r)}$ with $r\leq d$. If $r=d$ we have $S\in S_{g,d}^{(d)}$, then by Lemma~\ref{lemma2} we have that $S$ is not pseudo-symmetric, a contradiction. If $r<d$ then we can consider $\overline{S}\in S_{g,r}^{(r)}$ and by Lemma~\ref{lemma1} it is pseudo symmetric. Moreover $g(\overline{S})\geq r$ hence by a similar argument we have that $\overline{S}$ satisfies \genwilf. By Corollary~\ref{WilfProj} the same holds for $S$.
\end{proof}

Combining the previous theorem with Proposition~\ref{w-sym} we can state the following general result:

\begin{thm} Let $S\subseteq \mathbb{N}^{d}$ be an irreducible GNS. Then $S$ satisfies \genwilf{}. \end{thm}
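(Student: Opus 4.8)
The plan is to reduce the statement to the two cases that have already been settled. By Proposition~\ref{w-sym} every symmetric GNS satisfies \genwilf{}, and by the previous theorem every pseudo-symmetric GNS does as well. So it suffices to show that an irreducible GNS is necessarily symmetric or pseudo-symmetric; the theorem then follows by a single case distinction, which is exactly the combination the paper has in mind.

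The heart of the argument is therefore the dichotomy that every irreducible GNS is symmetric or pseudo-symmetric. I would derive this from Theorem~\ref{irr_formula}. By part (1), $S$ irreducible means $|EH(S)|=1$, say $EH(S)=\{\textbf{f}\}$, and the accompanying statement guarantees that $(S,\textbf{f})$ is then a Frobenius GNS; hence $\textbf{f}$ is the unique maximal hole and $H(\textbf{f})=H(S)$, $N(\textbf{f})=N(S)$, $C(\textbf{f})=C(S)$. Combining the injection $\Psi_{\textbf{f}}:N(\textbf{f})\to H(\textbf{f})$ from Section~1 with the identity $(f^{(1)}+1)\cdots(f^{(d)}+1)=|C(\textbf{f})|=|H(S)|+|N(S)|$, one sees that $\textbf{f}$ satisfies $2|H(S)|=(f^{(1)}+1)\cdots(f^{(d)}+1)$ when $\Psi_{\textbf{f}}$ is a bijection, and $2|H(S)|-1=(f^{(1)}+1)\cdots(f^{(d)}+1)$ when $\Psi_{\textbf{f}}$ omits exactly one hole. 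By parts (2) and (3) of Theorem~\ref{irr_formula} these are precisely the symmetric and pseudo-symmetric conditions. The genuinely nontrivial point, which I expect to be the main obstacle, is to rule out that $\Psi_{\textbf{f}}$ omits two or more holes (that is, $|N(S)|\le |H(S)|-2$); this is the analogue of the classical Rosales--Garc\'ia-S\'anchez dichotomy for numerical semigroups, established for GNS in \cite{Work1}, so I would invoke it there rather than reprove it.

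With the dichotomy in hand the conclusion is immediate. If $S$ is symmetric, Proposition~\ref{w-sym} gives $e(S)n(S)\ge d(f^{(1)}+1)\cdots(f^{(d)}+1)$, and since $S$ is Frobenius we have $c(S)=|C(\textbf{f})|=(f^{(1)}+1)\cdots(f^{(d)}+1)$, whence $e(S)n(S)\ge d\,c(S)$. If instead $S$ is pseudo-symmetric, the previous theorem yields $e(S)n(S)\ge d\,c(S)$ directly. In either case $S$ satisfies \genwilf{}.
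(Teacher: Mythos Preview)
Your proposal is correct and follows essentially the same route as the paper: the theorem is recorded as an immediate consequence of Proposition~\ref{w-sym} (the symmetric case) together with the preceding theorem (the pseudo-symmetric case), relying on the dichotomy that every irreducible GNS is symmetric or pseudo-symmetric. The paper leaves that dichotomy implicit (it is proved in~\cite{Work1}), whereas you spell it out and correctly flag it as the only nontrivial ingredient.
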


\section{Monomial Semigroups}
In this section we prove that generalized numerical semigroups satisfying $n(S)=1$ satisfy \genwilf{}.  We do this by exploiting a connection between generalized numerical semigroups with $n(S)=1$ and monomial ideals.  We assume some familiarity with commutative algebra.

\begin{defn}
Let $R=k[x_1,\ldots,x_d]$ and suppose $M$ is a graded $R$-module.  If $M$ has finite dimension as a $k$-vector space then we say that $M$ is \textit{zero-dimensional} and set $\ell(R/I)=\dim_k M$.  If $I$ is a homogeneous ideal of $R$ and $R/I$ is zero-dimensional, then we simply say $I$ is zero-dimensional.
\end{defn}

\begin{rem}
The disconnect between calling $M$ zero-dimensional while the dimension of $M$ as a $k$-vector space is positive is an unfortunate side effect of using \textit{dimension} to refer to both the Krull dimension of $M$ (which is zero) and the dimension of $M$ as a $k$-vector space (which is positive).
\end{rem}

Throughout this section, if $\pmb{\alpha}=(\alpha^{(1)},\ldots,\alpha^{(d)})\in\N^d$, then $x^{\pmb{\alpha}}$ means $x_1^{\alpha_1}x_2^{\alpha_2}\cdots x_d^{\alpha_d}$.

\begin{prop}\label{prop:MonomialTranslation}
Suppose $S\subset \N^d$ is a set containing $\mathbf{0}$, and $S^*=S\setminus\{\mathbf{0}\}$.  The following are equivalent.
\begin{enumerate}
\item $S$ is a generalized numerical semigroup with $n(S)=1$.
\item $S^*$ consists of the exponent vectors of the monomials in a zero-dimensional monomial ideal $I\subset k[x_1,\ldots,x_d]$ (where $k$ is any field).
\end{enumerate}
\end{prop}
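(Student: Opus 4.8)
The plan is to prove the two implications by passing back and forth through the standard combinatorial description of monomial ideals. The bridge is the following dictionary, which I would record first. A subset $E\subseteq\N^d$ is the set of exponent vectors of the monomials contained in a monomial ideal $I\subset k[x_1,\ldots,x_d]$ if and only if $E$ is an \emph{up-set} for the partial order $\le$, i.e. $\pmb{\alpha}\in E$ and $\pmb{\beta}\in\N^d$ imply $\pmb{\alpha}+\pmb{\beta}\in E$; in that case $I=\langle x^{\pmb{\alpha}}:\pmb{\alpha}\in E\rangle$ and the monomials of $I$ are exactly $\{x^{\pmb{\alpha}}:\pmb{\alpha}\in E\}$. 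Since the monomials \emph{not} in $I$ form a $k$-basis of $R/I$, the ideal $I$ is zero-dimensional precisely when $\N^d\setminus E$ is finite. With this dictionary in hand, both implications reduce to deciding when $S^*$ is an up-set with finite complement.

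For $(1)\Rightarrow(2)$ I would first note that, since $\textbf{0}\le\textbf{h}$ for every $\textbf{h}\in H(S)$, the condition $n(S)=1$ is the same as $N(S)=\{\textbf{0}\}$: no nonzero element of $S$ lies below a hole. The key step is to deduce that $S^*$ is an up-set. Take $\textbf{s}\in S^*$ and $\pmb{\beta}\in\N^d$; if $\textbf{s}+\pmb{\beta}\notin S$ then $\textbf{s}+\pmb{\beta}\in H(S)$, and $\textbf{s}\le\textbf{s}+\pmb{\beta}$ would place the nonzero element $\textbf{s}$ in $N(S)$, contradicting $N(S)=\{\textbf{0}\}$. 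Hence $\textbf{s}+\pmb{\beta}\in S^*$, so $S^*$ is an up-set and $I=\langle x^{\pmb{\alpha}}:\pmb{\alpha}\in S^*\rangle$ has monomial set exactly $S^*$. Its complement $\N^d\setminus S^*=H(S)\cup\{\textbf{0}\}$ is finite because $S$ is a GNS, so $I$ is zero-dimensional (and proper, since $\textbf{0}\notin S^*$).

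For $(2)\Rightarrow(1)$, given a zero-dimensional monomial ideal $I$ with exponent set $S^*$, the up-set property immediately yields the monoid structure: for $\textbf{s},\textbf{t}\in S^*$ we have $\textbf{s}+\textbf{t}\ge\textbf{s}$, hence $\textbf{s}+\textbf{t}\in S^*$, and together with $\textbf{0}\in S$ this shows $S=S^*\cup\{\textbf{0}\}$ is a submonoid. Finiteness of $\N^d\setminus S^*$ (equivalently $\dim_k R/I<\infty$) makes $H(S)$ finite, so $S$ is a GNS, and the up-set property again shows that no nonzero element of $S$ can be $\le$ a hole, giving $N(S)=\{\textbf{0}\}$ and $n(S)=1$. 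The one point to flag is the degenerate case $I=\langle x_1,\ldots,x_d\rangle$, for which $S=\N^d$, $H(S)=\emptyset$, and $n(S)=0$; this is the only zero-dimensional monomial ideal that must be set aside, and it corresponds exactly to $H(S)=\emptyset$.

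The conceptual core — and the only step that is more than a formal translation — is the equivalence between $n(S)=1$ and $S^*$ being an up-set; once that is isolated, each direction is a few lines. I expect the only real care to be the bookkeeping around $\textbf{0}$ and the trivial semigroup $S=\N^d$, rather than any genuine difficulty.
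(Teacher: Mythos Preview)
Your proof is correct and follows essentially the same approach as the paper: both directions pivot on the equivalence between $n(S)=1$ and $S^*$ being an up-set in $\N^d$, with the paper phrasing this as ``$x^{\pmb{\beta}}\in I\Rightarrow\pmb{\beta}\in S^*$'' rather than isolating the dictionary explicitly as you do. Your observation about the degenerate case $I=\langle x_1,\ldots,x_d\rangle$ (equivalently $S=\N^d$, $n(S)=0$) is a valid boundary issue that the paper's statement and proof leave unaddressed.
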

\begin{proof}

(1)$\Rightarrow$(2): In the polynomial ring $k[x_1,\ldots,x_d]$, consider the ideal $I=\langle x^{\pmb{\alpha}}\mid \pmb{\alpha}\in S^*\rangle$.  We claim that if $x^{\pmb{\beta}}$ is a monomial and $x^{\pmb{\beta}}\in I$, then $\pmb{\beta}\in S^*$.  To see this, notice that $x^{\pmb{\beta}}\in I$ means $x^{\pmb{\alpha}}\mid x^{\pmb{\beta}}$ for some $\pmb{\alpha}\in S^*$.  In other words $\pmb{\alpha}\le \pmb{\beta}$ in the natural partial order.  If $\pmb{\beta}\notin S^*$ then $n(S)\ge 2$ since $\pmb{\alpha}$ would contribute to the set $N(S)=\{\textbf{s}\in S:\textbf{s}\le \textbf{h}\mbox{ for some } \textbf{h}\in H(S)\}$ whose cardinality is $n(S)$.  Hence $\pmb{\beta}\in S^*$.  It follows that $S^*$ is exactly the set of the exponent vectors of the monomials in the ideal $I$.  Since $\N^d\setminus S^*$ is finite, $I$ is zero-dimensional.\\
(2)$\Rightarrow$(1): Suppose $I$ is a zero-dimensional monomial ideal, and $S^*=\{\pmb{\alpha}\in\N^d\mid x^{\pmb{\alpha}}\in I\}$.  Since $I$ is an ideal, if $x^{\pmb{\alpha}},x^{\pmb{\beta}}\in I$, then $x^{\pmb{\alpha}+\pmb{\beta}}\in I$ and hence $\pmb{\alpha}+\pmb{\beta}\in S^*$.  Thus if we take $S=S^*\cup\{\mathbf{0}\}$, $S$ is a semigroup.  Moreover, if $x^{\pmb{\alpha}}\notin I$, then $x^{\pmb{\alpha}}$ is not divisible by any monomial $x^{\pmb{\beta}}\in I$, hence the set $\{\textbf{n}\in\N^d\mid \textbf{n}\le \pmb{\alpha}\}$ does not contain any elements of $S^*$.  It follows that $n(S)=1$.  Also, $\N^d\setminus S$ is finite since $I$ is zero-dimensional.
\end{proof}

In view of Proposition~\ref{prop:MonomialTranslation}, we make the following definition.

\begin{defn}
If $S$ is a generalized numerical semigroup satisfying $n(S)=1$ then we call $S$ a \textit{monomial semigroup} and we call the ideal $I=\langle x^{\pmb{\alpha}}\mid \pmb{\alpha}\in S^*\rangle$ the ideal corresponding to $S$.
\end{defn}

\begin{lemma}\label{lem:TranslatingStatistics}
If $S\subset\N^d$ is a monomial semigroup and $I\subset R=k[x_1,\ldots,x_d]$ is the ideal corresponding to $S$, then $e(S)=\ell(I/I^2)$ and $c(S)=\ell(R/I)$.
\end{lemma}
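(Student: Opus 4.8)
The two equalities are proved separately; both rest on the standard fact that for a monomial ideal $J\subset R$ the quotient $R/J$ has a $k$-basis consisting of the residues of the monomials not lying in $J$. The plan is to make the combinatorics of these ``standard monomial'' bases match the statistics $c(S)$ and $e(S)$.

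For the identity $c(S)=\ell(R/I)$, I would first use Proposition~\ref{prop:MonomialTranslation} to identify $S^*$ with the set of exponent vectors $\pmb{\alpha}$ such that $x^{\pmb{\alpha}}\in I$. Consequently the monomials not in $I$ are exactly those $x^{\pmb{\alpha}}$ with $\pmb{\alpha}\in (\N^d\setminus S)\cup\{\mathbf{0}\}=H(S)\cup\{\mathbf{0}\}$, a disjoint union since $\mathbf{0}\in S$. As these monomials form a $k$-basis of $R/I$, we get $\ell(R/I)=g(S)+1$. On the other side, $C(S)=N(S)\sqcup H(S)$ gives $c(S)=n(S)+g(S)$, and since $S$ is a monomial semigroup we have $n(S)=1$, whence $c(S)=g(S)+1=\ell(R/I)$.

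For the identity $e(S)=\ell(I/I^2)$, I would pass to the monomial basis of $I/I^2$: since $I$ and $I^2$ are monomial ideals, the residues of the monomials in $I\setminus I^2$ form a $k$-basis of $I/I^2$ (finite-dimensional because $I^2$, like $I$, is zero-dimensional), so $\ell(I/I^2)$ equals the number of such monomials. The key step is then to show that $x^{\pmb{\gamma}}\in I^2$ if and only if $\pmb{\gamma}$ is a sum of two elements of $S^*$. The ``if'' direction is immediate. For ``only if'', membership $x^{\pmb{\gamma}}\in I^2$ gives $\pmb{\alpha}+\pmb{\beta}\le\pmb{\gamma}$ for some $\pmb{\alpha},\pmb{\beta}\in S^*$; here I would exploit that $I$ is an ideal, so $S^*+\N^d\subseteq S^*$, to absorb the slack $\pmb{\gamma}-\pmb{\alpha}-\pmb{\beta}$ into $\pmb{\alpha}$ and obtain an exact decomposition $\pmb{\gamma}=(\pmb{\alpha}+(\pmb{\gamma}-\pmb{\alpha}-\pmb{\beta}))+\pmb{\beta}$ with both summands in $S^*$. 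Granting this, the monomials of $I\setminus I^2$ correspond precisely to the elements of $S^*$ that are not the sum of two elements of $S^*$, i.e.\ to the minimal semigroup generators $G(S)$, giving $\ell(I/I^2)=e(S)$.

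The main obstacle is this last equivalence: minimal monomial generators of the ideal $I$ do not in general coincide with the minimal semigroup generators of $S$ (for instance $\langle 2,3\rangle$ corresponds to $I=(x^2)$, which has one monomial generator but two semigroup generators), so it is essential to work with $I/I^2$ rather than with $I/\langle x_1,\ldots,x_d\rangle I$, and to use the ideal-absorption property above to rewrite inequalities of exponent vectors as exact semigroup decompositions.
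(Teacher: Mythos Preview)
Your argument is correct and follows essentially the same route as the paper: both identify the monomials of $I\setminus I^2$ with $S^*\setminus(S^*+S^*)=G(S)$ and the monomials outside $I$ with $C(S)$. You supply the detail the paper leaves implicit---namely, the absorption step showing that exponent vectors of monomials in $I^2$ coincide with $S^*+S^*$---and compute $c(S)$ via $g(S)+1$ rather than by direct identification, but these are minor elaborations rather than a different approach.
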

\begin{proof}
It is well-known that a minimal generating set for $S$ is provided by $S^*\setminus(S^*+S^*)$.  We can identify $S^*$ with the monomials in $I$ and $S^*+S^*$ with the monomials in $I^2$.  It follows that $S^*\setminus(S^*+S^*)$ can be identified with the monomials in $I$ but not in $I^2$.  These form a $k$-vector space basis for $I/I^2$.  Hence $e(S)=|S^*\setminus(S^*+S^*)|=\ell(I/I^2)$.

Recall $c(S)=|C(S)|$, and $C(S)=\{\textbf{n}\in\N^d \mid \textbf{n}\le \textbf{h}\mbox{ for some }\textbf{h}\in H(S)\}$.  Clearly the set $C(S)$ can be identified with monomials not in $I$.  These form a basis for $R/I$, hence $c(S)=\ell(R/I)$.
\end{proof}

\begin{exa}\label{ex:Symp} \rm
	Let $g=3$ and $d=2$. Consider the GNS $S=\mathbb{N}^{2}\setminus \{(0,1),(0,2),(0,3),(1,0),(1,1),(1,2),(2,0),(2,1),(3,0)\}$, which corresponds to the monomial ideal $I\subset k[x,y]$ generated by all monomials in two variables of degree at least four.  The minimal set of generators is $G(S)=\{(0,4),(0,5),(0,6),(0,7), (4,0),(5,0),(6,0),(7,0),(1,3),(1,4), (1,5), (1,6),\newline (2,2), (2,3),(2,4),(2,5),(3,1),(3,2),(3,3),(3,4),(4,1),(4,2),(4,3),(5,1),\newline (5,2),(6,1)\}$; these are the exponent vectors of monomials in $I$ but not in $I^2$.  Figure~\ref{fig:Symp} provides a graphical view of this GNS -- black points are the holes of the GNS, while the red points are the minimal generators.
	
	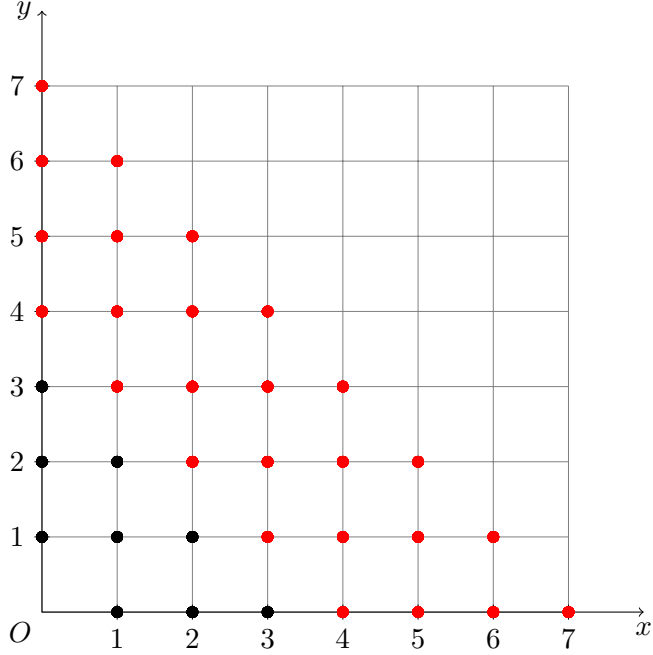
\begin{figure}[htp]
		\begin{center}
			\begin{tikzpicture} 
			\usetikzlibrary{patterns}
			\draw [help lines] (0,0) grid (7,7);
			\draw [<->] (0,8) node [left] {$y$} -- (0,0)
			-- (8,0) node [below] {$x$};
			\foreach \i in {1,...,7}
			\draw (\i,1mm) -- (\i,-1mm) node [below] {$\i$} 
			(1mm,\i) -- (-1mm,\i) node [left] {$\i$}; 
			\node [below left] at (0,0) {$O$};

			\draw [mark=*] plot (0,1);
			\draw [mark=*] plot (0,2);
			\draw [mark=*] plot (0,3); 
			\draw [mark=*] plot (1,0);
			\draw [mark=*] plot (1,1);
			\draw [mark=*] plot (1,2);
			\draw [mark=*] plot (2,0);
			\draw [mark=*] plot (2,1);
			\draw [mark=*] plot (3,0);
			
			\draw [red, mark=*] plot (4,0);
			\draw [red, mark=*] plot (5,0);
			\draw [red, mark=*] plot (6,0);
			\draw [red, mark=*] plot (7,0);
			
			\draw [red, mark=*] plot (0,4);
			\draw [red, mark=*] plot (0,5);
			\draw [red, mark=*] plot (0,6);
			\draw [red, mark=*] plot (0,7);
			\draw [red, mark=*] plot (1,3);
			\draw [red, mark=*] plot (1,4);
			\draw [red, mark=*] plot (1,5);
			\draw [red, mark=*] plot (1,6);
			\draw [red, mark=*] plot (2,2);
			\draw [red, mark=*] plot (2,3);
			\draw [red, mark=*] plot (2,4);
			\draw [red, mark=*] plot (2,5);
			\draw [red, mark=*] plot (3,1);
			\draw [red, mark=*] plot (3,2);
			\draw [red, mark=*] plot (3,3);
			\draw [red, mark=*] plot (3,4);
			\draw [red, mark=*] plot (4,1);
			\draw [red, mark=*] plot (4,2);
			\draw [red, mark=*] plot (4,3);
			\draw [red, mark=*] plot (5,1);
			\draw [red, mark=*] plot (5,2);
			\draw [red, mark=*] plot (6,1);

			\end{tikzpicture}
		\end{center}
		\caption{The semigroup in Example~\ref{ex:Symp}}\label{fig:Symp}
	\end{figure}
	
\end{exa}

\begin{thm}[The Generalized Wilf Conjecture for Monomial Semigroups]\label{thm:MonomialWilf}
If $S$ is a monomial semigroup, then $d\hskip 1pt c(S)\le e(S)$.  Equivalently (by Lemma~\ref{lem:TranslatingStatistics}), if $I\subset R=k[x_1,\ldots,x_d]$ is a zero-dimensional monomial ideal, then $d\hskip 1pt \ell(R/I)\le \ell(I/I^2)$.
\end{thm}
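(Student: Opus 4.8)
The plan is to prove the stated inequality $d\,\ell(R/I)\le\ell(I/I^2)$ directly, by induction on $d$. The base case $d=1$ is immediate: every zero-dimensional monomial ideal of $k[x_1]$ is $I=(x_1^{n})$, and then $\ell(R/I)=n=\ell(I/I^2)$, so the inequality holds (with equality). For the inductive step I would \emph{slice} $I$ along the last variable $x_d$. Writing $R'=k[x_1,\ldots,x_{d-1}]$, define for each $\pmb a\in\N^{d-1}$ the entry height $\tau(\pmb a)=\min\{t\ge 0\mid x^{\pmb a}x_d^{\,t}\in I\}$, which is finite because $I$ is $\mathfrak m$-primary. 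Since $I$ is up-closed, $\tau$ is nonincreasing for the partial order on $\N^{d-1}$. The sliced ideals $I_s=(I:x_d^{\,s})\cap R'=\langle x^{\pmb a}\mid \tau(\pmb a)\le s\rangle$ form an increasing chain of zero-dimensional monomial ideals of $R'$ that stabilizes at $R'$ once $s\ge a_d:=\tau(\mathbf 0)$, so the inductive hypothesis applies to each $I_s$ (in the equivalent form $\ell(R'/\mathfrak a^2)\ge d\,\ell(R'/\mathfrak a)$ for zero-dimensional $\mathfrak a\subset R'$).

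Next I would record three counting formulas obtained by organizing monomials column-by-column in the $x_d$-direction. Because a monomial lies in $I^2$ exactly when it factors as a product of two monomials of $I$, setting $\sigma'(\pmb a)=\min\{t\mid x^{\pmb a}x_d^{\,t}\in I^2\}$ gives $\sigma'(\pmb a)=\min_{\pmb c_1+\pmb c_2=\pmb a}\bigl(\tau(\pmb c_1)+\tau(\pmb c_2)\bigr)$. Counting the monomials in each column then yields $\ell(R/I)=\sum_{\pmb a}\tau(\pmb a)$ and $\ell(I/I^2)=\sum_{\pmb a}\bigl(\sigma'(\pmb a)-\tau(\pmb a)\bigr)$. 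Similarly, $x^{\pmb a}\in I_s^2$ iff $\pmb a=\pmb c_1+\pmb c_2$ with $\tau(\pmb c_i)\le s$ for $i=1,2$, so with $\sigma(\pmb a)=\min_{\pmb c_1+\pmb c_2=\pmb a}\max\bigl(\tau(\pmb c_1),\tau(\pmb c_2)\bigr)$ one gets $\sum_{s\ge 0}\ell(R'/I_s^2)=\sum_{\pmb a}\sigma(\pmb a)$.

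The crux, and the step I expect to be the main obstacle, is the pointwise inequality $\tau(\pmb a)+\sigma(\pmb a)\le\sigma'(\pmb a)$ for every column $\pmb a$; this is precisely where one must control the mixed cross-terms $I_{p}I_{q}$ (with $p\neq q$) occurring in the slices of $I^2$, which is what makes a naive slicing argument lose the needed factor. I would prove it as follows: choose a splitting $\pmb a=\pmb c_1+\pmb c_2$ attaining $\sigma'(\pmb a)=\tau(\pmb c_1)+\tau(\pmb c_2)$, ordered so that $\tau(\pmb c_1)\le\tau(\pmb c_2)$. Then $\sigma(\pmb a)\le\max\bigl(\tau(\pmb c_1),\tau(\pmb c_2)\bigr)=\tau(\pmb c_2)$ by definition of $\sigma$, while $\tau(\pmb a)\le\tau(\pmb c_1)$ because $\pmb c_1\le\pmb a$ and $\tau$ is nonincreasing; adding these gives $\tau(\pmb a)+\sigma(\pmb a)\le\tau(\pmb c_1)+\tau(\pmb c_2)=\sigma'(\pmb a)$.

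Summing the pointwise inequality over all columns closes the argument: $\sum_{s}\ell(R'/I_s^2)=\sum_{\pmb a}\sigma(\pmb a)\le\sum_{\pmb a}\bigl(\sigma'(\pmb a)-\tau(\pmb a)\bigr)=\ell(I/I^2)$. Applying the inductive hypothesis to each slice gives $\ell(R'/I_s^2)\ge d\,\ell(R'/I_s)$, whence $\sum_{s}\ell(R'/I_s^2)\ge d\sum_{s}\ell(R'/I_s)=d\,\ell(R/I)$. Combining the two displays yields $\ell(I/I^2)\ge d\,\ell(R/I)$, completing the induction. It is worth noting that equality propagates exactly in the complete intersection case $I=(x_1^{a_1},\ldots,x_d^{a_d})$, which matches the minimal multiplicity phenomenon discussed earlier in the paper.
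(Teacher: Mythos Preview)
Your proof is correct, and it takes a genuinely different route from the paper's. The paper argues by double induction on $d$ and on $\ell(R/I)$: with $y=x_d$ it splits $R/I$ via the short exact sequence $0\to R/(I{:}y)\to R/I\to \overline{R}/\overline{I}\to 0$, applies the length induction to $I{:}y$ and the dimension induction to $\overline{I}$, and reduces the remaining deficit to the inequality $\ell(\overline{R}/\overline{I})\le \ell\bigl((I{:}y)^2/(I^2{:}y)\bigr)$, which it proves by exhibiting an explicit injection on monomial bases. By contrast, you induct only on $d$ and slice at \emph{all} $x_d$-levels at once: encoding the staircase by the height function $\tau$ and the derived functions $\sigma,\sigma'$, the crux becomes the pointwise inequality $\tau(\pmb a)+\sigma(\pmb a)\le\sigma'(\pmb a)$, which follows in two lines from monotonicity of $\tau$. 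Summed over columns this gives $\sum_s\ell(R'/I_s^2)\le\ell(I/I^2)$, and the inductive hypothesis $(d-1)\ell(R'/I_s)\le\ell(I_s/I_s^2)$ (equivalently $\ell(R'/I_s^2)\ge d\,\ell(R'/I_s)$) finishes. Your argument is more elementary and self-contained (pure monomial counting, no exact sequences), and the single induction on $d$ is a pleasant simplification; the paper's approach, on the other hand, isolates an algebraic inequality $\ell(\overline{R}/\overline{I})\le\ell((I{:}y)^2/(I^2{:}y))$ that is of independent interest and is reused verbatim in their subsequent characterization of the equality case (Proposition~\ref{prop:ci}). One small point: the slices with $I_s=R'$ are not literally covered by the hypothesis ``zero-dimensional monomial ideal,'' but since both sides vanish there the inequality is trivial; it may be worth saying so explicitly.
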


\begin{proof}
We proceed by induction on the ambient dimension $d$ and $\ell(R/I)=c(S)$.  If $d=1$, then $I=\langle x^k\rangle$ and $I^2=\langle x^{2k}\rangle$, so $d\ell(R/I)=k=\ell(I^2/I)$.  Now suppose $d>1$ and $R=k[x_1,\ldots,x_d]$.  If $I\subset R$ is a monomial ideal so that $\ell(R/I)=1$ then $I=\langle x_1,\ldots,x_{d}\rangle$ and $I/I^2\cong \mathrm{Span}_k\{x_1,\ldots,x_{d}\}$, so $\ell(I/I^2)=d=d\cdot \ell(R/I)$.  Now suppose that $I$ is a monomial ideal in $R$ and that the proposed inequality holds for all monomial ideals in less than $d$ variables and all monomial quotients of length less than $\ell(R/I)$.  For simplicity we write $y=x_d$ and put $\overline{R}=R/\langle y\rangle\cong k[x_1,\ldots,x_{d-1}]$, $\overline{I}=(I+\langle y\rangle)/\langle y\rangle \subset \overline{R}$.  We consider the short exact sequence
\begin{equation}\label{eq:1}
0\rightarrow\frac{R}{I:y}\xrightarrow{\cdot y} \frac{R}{I}\rightarrow \frac{R}{I+\langle y\rangle}\cong \frac{\overline{R}}{\overline{I}}\rightarrow 0.
\end{equation}
From~\eqref{eq:1} we get $\ell(R/I)=\ell(R/(I:y))+\ell(\overline{R}/\overline{I})$.  We always have the containment $I\subset I:y$.  Furthermore the containment is always proper since $I$ is zero-dimensional, so $\ell(R/(I:y))<\ell(R/I)$.  By the induction hypothesis, we thus have $d\ell(R/(I:y))\le\ell((I:y)/(I:y)^2)$.  Since $\overline{R}$ involves one less variable, we also have $(d-1)\ell(\overline{R}/\overline{I})\le\ell(\overline{I}/\overline{I}^2)$ by induction.  Hence
\[
d\ell(R/I)=d\ell(R/(I:y))+d\ell(\overline{R}/\overline{I})\le \ell((I:y)/(I:y)^2)+\ell(\overline{I}/\overline{I}^2)+\ell(\overline{R}/\overline{I}).
\]
To complete the induction, it suffices to prove
\begin{equation}\label{eq:2}
\ell((I:y)/(I:y)^2)+\ell(\overline{I}/\overline{I}^2)+\ell(\overline{R}/\overline{I})\le \ell(I/I^2).
\end{equation}
We now simplify~\eqref{eq:2}.  Using the identity $y(I:y)=I\cap \langle y\rangle$, there is a short exact sequence
\[
0\rightarrow \frac{I:y}{I^2:y}\xrightarrow{\cdot y} \frac{I}{I^2}\rightarrow \frac{I}{I^2+I\cap\langle y\rangle }\cong \frac{I+\langle y\rangle}{I^2+\langle y\rangle}\cong \frac{\overline{I}}{\overline{I}^2}\rightarrow 0,
\]
which yields $\ell(I/I^2)=\ell((I:y)/(I^2:y))+\ell(\overline{I}/\overline{I}^2)$.  Plugging this into the right hand side of~\eqref{eq:2} and simplifying, we rewrite~\eqref{eq:2} as:
\begin{equation}\label{eq:3}
\ell((I:y)/(I:y)^2)+\ell(\overline{R}/\overline{I})\le \ell((I:y)/(I^2:y)).
\end{equation}
Finally, notice that $(I^2:y)\subset (I:y)^2$.  From the short exact sequence
\[
0\rightarrow \frac{(I:y)^2}{I^2:y} \rightarrow \frac{I:y}{I^2:y} \rightarrow \frac{I:y}{(I:y)^2} \rightarrow 0
\]
we get $\ell((I:y)/(I^2:y))-\ell((I:y)/(I:y)^2)=\ell((I:y)^2/(I^2:y))$.  So we can rewrite~\eqref{eq:3}, hence also~\eqref{eq:2}, as:
\begin{equation}\label{eq:4}
\ell\left(\frac{\overline{R}}{\overline{I}}\right)\le \ell\left(\frac{(I:y)^2}{I^2:y}\right).
\end{equation}
We prove~\eqref{eq:4} in Lemma~\ref{lem:ColonInequality}, completing the induction and the proof.
\end{proof}

\begin{lemma}\label{lem:ColonInequality}
Suppose $I\subset R=k[x_1,\ldots,x_{d-1},y]$ is a zero-dimensional monomial ideal and put $\overline{R}=R/\langle y\rangle\cong k[x_1,\ldots,x_{d-1}]$ and $\overline{I}=(I+\langle y\rangle)/\langle y\rangle \subset \overline{R}$.  Then
\[
\ell\left(\frac{\overline{R}}{\overline{I}}\right)\le \ell\left(\frac{(I:y)^2}{I^2:y}\right).
\]
\end{lemma}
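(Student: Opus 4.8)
The plan is to reduce both sides to monomial counts and then exhibit an explicit injection from a monomial $k$-basis of $\overline{R}/\overline{I}$ into a monomial $k$-basis of $(I:y)^2/(I^2:y)$. First I would identify the left-hand side combinatorially: since $\overline{R}/\overline{I}\cong R/(I+\langle y\rangle)$, a $k$-basis is given by the \emph{pure} monomials $x^{\pmb{\alpha}}$ (those not divisible by $y$) that do not lie in $I$. So it suffices to embed the set $A=\{\pmb{\alpha}\in\N^{d-1}\mid x^{\pmb{\alpha}}\notin I\}$ into the set of monomials representing nonzero classes of $(I:y)^2/(I^2:y)$; here I use the containment $I^2:y\subseteq(I:y)^2$ established in the proof of Theorem~\ref{thm:MonomialWilf}, so that this quotient genuinely has a monomial basis and distinct monomials not in $I^2:y$ give linearly independent classes.

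Next, for each $\pmb{\alpha}\in A$ I would set $D(\pmb{\alpha})=\min\{t\in\N\mid x^{\pmb{\alpha}}y^{t}\in I^2\}$, which is finite because $I$ is zero-dimensional (so $y^N\in I$ for some $N$ and hence $x^{\pmb{\alpha}}y^{2N}\in I^2$). The candidate target is the monomial $x^{\pmb{\alpha}}y^{D(\pmb{\alpha})-2}$, and the claim is that it lies in $(I:y)^2\setminus(I^2:y)$. The statement that it is not in $I^2:y$ is immediate from minimality of $D(\pmb{\alpha})$, since $x^{\pmb{\alpha}}y^{D(\pmb{\alpha})-1}\notin I^2$. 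For membership in $(I:y)^2$ I would use that $x^{\pmb{\alpha}}y^{D(\pmb{\alpha})}\in I^2$ is divisible by a product $uv$ of two monomial generators $u,v$ of $I$; writing $u=x^{\pmb{\beta}}y^{a}$ and $v=x^{\pmb{\beta}'}y^{a'}$ with $\pmb{\beta}+\pmb{\beta}'\le\pmb{\alpha}$ and $a+a'\le D(\pmb{\alpha})$, I strip one $y$ from each to obtain $u/y,v/y\in I:y$, whose product $x^{\pmb{\beta}+\pmb{\beta}'}y^{a+a'-2}$ divides $x^{\pmb{\alpha}}y^{D(\pmb{\alpha})-2}$.

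The key step, and the one I expect to be the crux, is showing that both $y$-exponents $a,a'$ are at least $1$ (which makes stripping a $y$ legal and forces $D(\pmb{\alpha})\ge 2$, so the target $y$-degree is nonnegative). This is exactly where the hypothesis $x^{\pmb{\alpha}}\notin I$ enters: if, say, $a=0$, then $u=x^{\pmb{\beta}}$ is a pure monomial of $I$ with $\pmb{\beta}\le\pmb{\alpha}$, and since $I$ is a monomial ideal this forces $x^{\pmb{\alpha}}\in I$, a contradiction. I want to flag a tempting wrong turn here: the naive map sending each $\pmb{\alpha}$ to a \emph{fixed} target $y$-degree (for instance $x^{\pmb{\alpha}}y^{b+1}$, where $b$ is the threshold at which $x^{\pmb{\alpha}}y^{b}$ first enters $I$) does \emph{not} work, because $x^{\pmb{\alpha}}$ can land in $I^2$ "prematurely," at a smaller $y$-power than expected, putting the target inside $I^2:y$.

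The fix is precisely that the target $y$-degree $D(\pmb{\alpha})-2$ is allowed to depend on $\pmb{\alpha}$, and crucially this costs nothing for injectivity: distinct $\pmb{\alpha}$ produce target monomials with distinct $x$-exponent $\pmb{\alpha}$, so the assignment $\pmb{\alpha}\mapsto x^{\pmb{\alpha}}y^{D(\pmb{\alpha})-2}$ is automatically injective and its images are distinct monomials, hence linearly independent classes in $(I:y)^2/(I^2:y)$. This yields $\ell(\overline{R}/\overline{I})=|A|\le\ell\!\left((I:y)^2/(I^2:y)\right)$, which is the desired inequality. The remaining details (that a monomial of $I^2$ is divisible by a product of two generators of $I$, and the divisibility bookkeeping in the second paragraph) are routine for monomial ideals.
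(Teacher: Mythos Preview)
Your proof is correct and takes essentially the same approach as the paper: both construct an injection from the monomial basis of $\overline{R}/\overline{I}$ into that of $(I:y)^2/(I^2:y)$ by sending a pure monomial $m$ to $m$ times the largest power of $y$ keeping it outside $I^2:y$, with injectivity coming for free since the $x$-part is preserved. The only cosmetic differences are that the paper indexes via $t=\min\{k:y^km\in I^2{:}y\}$ (so its target $y^{t-1}m$ equals your $y^{D(\pmb{\alpha})-2}m$), and that the paper first proves the identity $I^2{:}y=I(I{:}y)$ and strips a single $y$ from the $I$-factor, whereas you strip one $y$ from each of two generators of $I$ directly; one small caveat is that the containment $I^2{:}y\subseteq(I{:}y)^2$ you cite from Theorem~\ref{thm:MonomialWilf} is in fact justified within this very lemma in the paper, so you should give the one-line argument yourself rather than invoke it.
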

\begin{proof}
We first prove that $I^2:y=I(I:y)$.  If $m$ is a monomial in $I^2:y$ then $ym=fg$ for some monomials $f,g\in I$.  Hence $y\mid f$ or $y\mid g$; without loss assume $f=yf'$.  Then $m=f'g\in I(I:y)$.  Conversely if $m\in I(I:y)$ then $m=fg$ where $f\in I$ and $g\in I:y$.  Then $my=f(gy)\in I^2$.

Now we prove the inequality by producing an injective map $\phi$ from the canonical basis of $\overline{R}/\overline{I}$ into $(I:y)^2/(I^2:y)$.  The canonical basis for $\overline{R}/\overline{I}$ consists of monomials in the variables $x_1,\ldots,x_{d-1}$ which are not in $I$, and the canonical basis for $(I:y)^2/(I^2:y)$ consists of monomials in the variables $x_1,\ldots,x_{d-1},y$ which are in $(I:y)^2$ but not in $(I^2:y)$.

Let $m$ be a monomial in $\overline{R}/\overline{I}$, which we view as a monomial in $R$ without the $y$ variable.  Since $I^2:y$ has finite colength, $y^km\in (I^2:y)$ for all $k\gg 0$.  Let $t=\min\{k:y^km\in(I^2:y)\}$.  We claim that $y^{t-1}m\in (I:y)^2$, as follows.  Since $y^tm\in (I^2:y)=I(I:y)$, $y^tm=ab$ where $a\in I$ and $b\in (I:y)$.  Suppose $y\nmid a$.  Then $y^t\mid b$, so $b=y^tb'$ and $m=ab'\in I$, a contradiction since $m\notin I$.  Hence $y|a$ so $a=ya'$ for some $a'\in(I:y)$.  Thus $y^{t-1}m=a'b\in (I:y)^2$.

Now we define the map $\phi:\overline{R}/\overline{I}\to (I:y)^2/(I^2:y)$.  Given $m\in \overline{R}/\overline{I}$ (regarded as a monomial in $R$ not including the variable $y$), let $\phi(m)=y^{t-1}m$, where $t$ is the smallest integer such that $y^tm\in (I^2:y)$.  By the above argument, $\phi(m)\in (I:y)^2$ but not in $(I^2:y)$.  The map $\phi$ is clearly injective, so we are done.
\end{proof}

\begin{rem}
	There is a geometric interpretation of the inequality $d\ell(R/I)\le \ell(I/I^2)$ if $I$ is a zero-dimensional monomial ideal which is the initial ideal of a radical ideal.  Suppose that $p_1,\ldots,p_n\in\mathbb{A}^d$ are distinct points in affine $d$-space with corresponding ideals $\mathfrak{m}_1,\cdots,\mathfrak{m}_n$, and put $I=\mathfrak{m}_1\cap\cdots\cap\mathfrak{m}_n$.  We claim that $d\ell(R/I)=\ell(I/I^2)$.  Since $\ell(R/I)=n$ (the number of points), it suffices to show that $\ell(I/I^2)=nd$.  Since $I/I^2$ has finite length, we have
	\[
	I/I^2\cong \bigoplus_{i=1}^n (I/I^2)_{\mathfrak{m}_i},
	\]
	where $(I/I^2)_{\mathfrak{m}_i}$ is the localization at $\mathfrak{m}_i$.  Since localization is exact and $I_{\mathfrak{m}_i}\cong \mathfrak{m}_i$, $(I/I^2)_{\mathfrak{m}_i}\cong (\mathfrak{m}_i/\mathfrak{m}_i^2)$.  It is straightforward to show that $\ell(\mathfrak{m}_i/\mathfrak{m}_i^2)=d$ (it suffices to consider the case $\mathfrak{m}_i=\langle x_1,\ldots,x_d\rangle$), hence $\ell((I/I^2)_{\mathfrak{m}_i})=d$ for each summand above and $\ell(I/I^2)=nd$.  So $d\ell(R/I)=\ell(I/I^2)$ for zero-dimensional radical ideals.
	
	Now consider what happens under deformation to the initial ideal $J=\mbox{in}(I)$.  Since the deformation is flat, $\ell(R/I)=\ell(R/J)$.  However, $\mbox{in}(I^2)\supset J^2$, and these are not necessarily equal.  Thus $d\ell(R/J)=d\ell(R/I)=\ell(I/I^2)\le \ell(J/J^2)$, verifying Theorem~\ref{thm:MonomialWilf} if $J$ is the initial ideal of a zero-dimensional radical ideal.  Unfortunately, not all zero-dimensional monomial ideals are initial ideals of radical ideals (in technical terms, the Hilbert scheme of points is not necessarily \textit{smoothable}), so the above argument does not work for all monomial ideals.
\end{rem}

\begin{prop}\label{prop:ci}
If $I\subset k[x_1,\ldots,x_d]$ is a zero-dimensional monomial ideal, then $d\ell(R/I)=\ell(I/I^2)$ if and only if $I$ is a complete intersection.
\end{prop}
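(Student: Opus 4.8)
The plan is to prove both implications, dispatching the easy direction first and then establishing the substantive converse by refining the induction used in the proof of Theorem~\ref{thm:MonomialWilf}, tracking exactly when the inequalities become equalities.

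\textit{Forward direction.} Suppose $I$ is a zero-dimensional monomial complete intersection. A monomial ideal is a complete intersection precisely when its minimal generators are powers of distinct variables; since $I$ is zero-dimensional in $d$ variables this forces $I=\langle x_1^{a_1},\ldots,x_d^{a_d}\rangle$ for positive integers $a_1,\ldots,a_d$. These generators form a regular sequence, so the conormal module $I/I^2$ is a free $R/I$-module of rank $d$ (this can be read directly off the Koszul resolution). Hence $\ell(I/I^2)=d\,\ell(R/I)$. (Alternatively, equality here also follows from the induction below applied in reverse.)

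\textit{Converse, inductive setup.} Assume now that $d\,\ell(R/I)=\ell(I/I^2)$. I will show $I$ is a complete intersection by the same double induction as in Theorem~\ref{thm:MonomialWilf} (on $d$, and for fixed $d$ on $\ell(R/I)$). The base cases $d=1$ and $\ell(R/I)=1$ give $I=\langle x^k\rangle$ and $I=\mathfrak{m}$ respectively, both complete intersections. For the inductive step write $y=x_d$. The proof of Theorem~\ref{thm:MonomialWilf} produces the chain $d\,\ell(R/I)=d\,\ell(R/(I:y))+d\,\ell(\overline{R}/\overline{I})\le \ell((I:y)/(I:y)^2)+\ell(\overline{I}/\overline{I}^2)+\ell(\overline{R}/\overline{I})\le \ell(I/I^2)$, where the first inequality combines the $d$-variable bound for $I:y$ with the $(d-1)$-variable bound for $\overline{I}$, and the second is inequality~\eqref{eq:2}. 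Our hypothesis forces both inequalities to be equalities. Since the two summand-wise bounds $d\,\ell(R/(I:y))\le \ell((I:y)/(I:y)^2)$ and $(d-1)\ell(\overline{R}/\overline{I})\le \ell(\overline{I}/\overline{I}^2)$ hold individually, equality in the first step forces each of them to be an equality. By the inductive hypothesis applied to $I:y$ (strictly smaller colength, since $I\subsetneq I:y$ as $I$ is zero-dimensional) and to $\overline{I}$ (one fewer variable), both $I:y$ and $\overline{I}$ are complete intersections, say $I:y=\langle x_1^{c_1},\ldots,x_d^{c_d}\rangle$ and $\overline{I}=\langle x_1^{b_1},\ldots,x_{d-1}^{b_{d-1}}\rangle$.

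\textit{Combining, and the main obstacle.} The $y$-degree-$j$ slice of $I$ equals $\overline{I}$ when $j=0$ and equals the $(j-1)$-st slice of $I:y$ when $j\ge 1$, so $\overline{I}$ and $I:y$ determine $I$ completely: $I=\langle x_1^{b_1},\ldots,x_{d-1}^{b_{d-1}}\rangle+y\langle x_1^{c_1},\ldots,x_{d-1}^{c_{d-1}}\rangle+\langle y^{c_d+1}\rangle$, with $c_i\le b_i$ for $i<d$. Each generator $y\,x_i^{c_i}$ is a genuine (mixed) minimal generator exactly when $c_i<b_i$, and $I$ is a complete intersection if and only if all of these are redundant, i.e.\ $c_i=b_i$ for all $i<d$. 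Writing $A=\langle x_i^{b_i}\rangle$, $D=\langle x_i^{c_i}\rangle\subseteq \overline{R}$, $B=\ell(\overline{R}/A)$, and $C=\ell(\overline{R}/D)$, a direct count of the monomials of $I$ and of $I^2$ organized by $y$-degree evaluates $\ell(I/I^2)-d\,\ell(R/I)$ as $\ell(D/AD)+\ell(\overline{R}/(A+D^2))-B-(d-1)C$. The main obstacle is the purely combinatorial fact that, for nested monomial ``box'' ideals $A\subseteq D$, the equation $\ell(D/AD)+\ell(\overline{R}/(A+D^2))=B+(d-1)C$ forces $A=D$ (equivalently, the displayed difference is strictly positive whenever $A\subsetneq D$); this should follow by careful monomial bookkeeping, which I expect to be the technical heart of the argument. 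Granting it, our equality hypothesis forces $A=D$, so $b_i=c_i$ for all $i<d$ and $I=\langle x_1^{b_1},\ldots,x_{d-1}^{b_{d-1}},y^{c_d+1}\rangle$ is a complete intersection, completing the induction.
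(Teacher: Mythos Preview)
Your forward direction and inductive setup are fine (the conormal-module argument is a little slicker than the paper's direct count), and you correctly deduce that equality forces both $I:y$ and $\overline{I}$ to be complete intersections.  But the proof is incomplete: you explicitly defer the claim that $\ell(D/AD)+\ell(\overline{R}/(A+D^2))>B+(d-1)C$ whenever $A\subsetneq D$, and this is precisely the point where something must be exhibited.  As written, the converse is not proved.

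The paper sidesteps your global computation.  Equality throughout the chain of inequalities in Theorem~\ref{thm:MonomialWilf} forces equality in~\eqref{eq:4}, which means the injection $\phi:\overline{R}/\overline{I}\to (I:y)^2/(I^2:y)$ of Lemma~\ref{lem:ColonInequality} is a bijection.  The image of $\phi$ consists only of monomials whose $y$-free part is \emph{not} in $\overline{I}$, so it suffices to produce one monomial of $\overline{I}$ lying in $(I:y)^2\setminus(I^2:y)$.  In your notation (some $i$ with $1\le c_i<b_i$), the paper checks that $x_i^{2c_i}$ works when $2c_i\ge b_i$, and $x_i^{b_i}$ works when $2c_i<b_i$.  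This single witness replaces the deferred bookkeeping.

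If you prefer to finish your own route, your expression simplifies further: using $\ell(D/D^2)=(d-1)C$ and the isomorphism $(A+D^2)/A\cong D^2/(A\cap D^2)$ one gets
\[
\ell(D/AD)+\ell(\overline{R}/(A+D^2))-B-(d-1)C=\ell\bigl((A\cap D^2)/AD\bigr),
\]
and the same monomial $x_i^{\max(b_i,2c_i)}$ lies in $A\cap D^2$ but not in $AD$ (a pure $x_i$-power in $AD$ must have exponent at least $b_i+c_i>\max(b_i,2c_i)$).  So both approaches converge on the same witness; the paper just reaches it without computing the full discrepancy.
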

\begin{proof}
First, assume $I$ is a complete intersection, so $I=\langle x_1^{a_1},\ldots,x_d^{a_d}\rangle$.  The monomials in $I/I^2$ can be described as those which are divisible by precisely one of $x_i^{a_i}$.  Thus the monomials in $I/I^2$ are in bijection with the set $X=\{x_i^{a_i}m: m\notin I \mbox{ and }1\le i\le d\}$.  Clearly $|X|=d\ell(R/I)$, so we are done.

Now we prove by induction on $d$ and $\ell(R/I)$ that if $d\ell(R/I)=\ell(I/I^2)$ then $I$ is a complete intersection.  The cases $d=1$ and $\ell(R/I)=1$ are clear.  So suppose $d>1$ and $\ell(R/I)>1$.  Using the same notation as in the proof of Theorem~\ref{thm:MonomialWilf}, we have the short exact sequence
\[
0\rightarrow\frac{R}{I:y}\xrightarrow{\cdot y} \frac{R}{I}\rightarrow \frac{\overline{R}}{\overline{I}}\rightarrow 0.
\]
From the proof of Theorem~\ref{thm:MonomialWilf},
\[
\begin{array}{rl}
d\ell(R/I) & = d\ell(R/(I:y))+d\ell(\overline{R}/\overline{I})\\
& \le \ell((I:y)/(I:y)^2)+\ell(\overline{I}/\overline{I}^2)+\ell(\overline{R}/\overline{I})\\
& \le \ell(I/I^2),
\end{array}
\]
so if $d\ell(R/I)=\ell(I/I^2)$ then $d\ell(R/(I:y))=\ell((I:y)/(I:y)^2)$ and $(d-1)\ell(\overline{R}/\overline{I})=\ell(\overline{I}/\overline{I}^2)$.  By induction on $d$ and $\ell(R/I)$, both $I:y$ and $\overline{I}$ must be complete intersections.

Now we prove that if $I:y$ and $\overline{I}$ are both complete intersections and $I$ is \textit{not} a complete intersection, then $\ell\left(\frac{\overline{R}}{\overline{I}}\right)< \ell\left(\frac{(I:y)^2}{I^2:y}\right)$ and hence $d\ell(R/I)<\ell(I/I^2)$ (by the proof of Theorem~\ref{thm:MonomialWilf}).  Examining the proof of Lemma~\ref{lem:ColonInequality}, it suffices to prove that there is a monomial $m\in\overline{I}$ so that $m\in (I:y)^2$ but not in $I^2:y$.  This is what we show.

Since both $\overline{I}$ and $I:y$ are complete intersections, $I=\overline{I}+yM$ where $\overline{I}$ and $M$ are minimally generated as $\overline{I}=\langle x_1^{a_1},\ldots,x_{d-1}^{a_{d-1}}\rangle$ and $M=\langle x_1^{b_1},\ldots,x_{d-1}^{b_{d-1}},y^B\rangle$, and $a_1,\ldots,a_{d-1},b_1,\ldots,b_{d-1},B$ are all positive integers with the exception that we allow $b_i=-\infty$ with the convention that $x_i^{-\infty}=0$.  We stipulate that $b_i<a_i$ (otherwise $yx_i^{b_i}$ would be a redundant generator).  Since $I$ is not a complete intersection $b_i\ge 1$ for some $1\le i\le d-1$.  Thus $x_i^{2b_i}\in (I:y)^2$ but not in $I^2:y$.  If $x_i^{2b_i}\in\overline{I}$, then we are done.  Otherwise $2b_i<a_i$ and we claim that $x_i^{a_i}\in (I:y)^2$ but not in $I^2:y$.
\end{proof}

\begin{defn}\label{def:ordinary}
A generalized numerical semigroup $S\subset\N^d$ is an \textit{ordinary} GNS if there is some $\mathbf{f}\in\N^d$ so that $S=\{\mathbf{0}\}\cup (\N^d\setminus C(\mathbf{f}))$.
\end{defn}

\begin{rem}
In \cite{B.A.12} a numerical semigroup $S$ is called \emph{ordinary} if $S=\mathbb{N}\setminus\{1,2,\ldots,n\}$ for some $n\in \mathbb{N}$. The corresponding monomial ideal is $I=\langle x^{n+1}\rangle$.  Definition~\ref{def:ordinary} is a natural extension to generalized numerical semigroups of the notion of an ordinary numerical semigroup.
\end{rem}

\begin{prop}
The following conditions on a generalized numerical semigroup $S$ are equivalent:
\begin{enumerate}
\item $S$ is ordinary.
\item $S$ is a monomial semigroup and its corresponding ideal is a complete intersection.
\item $S$ satisfies $n(S)=1$ and $dc(S)=e(S)$.
\end{enumerate}
\end{prop}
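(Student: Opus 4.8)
The plan is to treat the three conditions as living in two different languages — the combinatorial language of $S$ and the algebraic language of its corresponding monomial ideal $I$ — and to set up a dictionary between them. Concretely, I would prove the chain $(1)\Leftrightarrow(2)\Leftrightarrow(3)$, where $(2)\Leftrightarrow(3)$ is essentially a restatement of results already in hand and $(1)\Leftrightarrow(2)$ is a direct combinatorial computation identifying ordinary semigroups with monomial complete intersections. Almost all of the real work has already been done in Proposition~\ref{prop:MonomialTranslation}, Lemma~\ref{lem:TranslatingStatistics}, and Proposition~\ref{prop:ci}; the task is to assemble these correctly and to handle the degenerate boundary case.

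For $(2)\Leftrightarrow(3)$: by Proposition~\ref{prop:MonomialTranslation}, the condition that $S$ is a monomial semigroup is literally the condition $n(S)=1$, and in that case $S$ has a well-defined corresponding ideal $I\subset R=k[x_1,\ldots,x_d]$. Lemma~\ref{lem:TranslatingStatistics} then translates the two statistics appearing in $(3)$, giving $e(S)=\ell(I/I^2)$ and $c(S)=\ell(R/I)$, so that the equality $dc(S)=e(S)$ is exactly $d\ell(R/I)=\ell(I/I^2)$. By Proposition~\ref{prop:ci} this holds if and only if $I$ is a complete intersection. Reading this back and forth gives $(2)\Leftrightarrow(3)$ with nothing to check beyond matching the quantities.

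For $(1)\Leftrightarrow(2)$: I would first observe that an ordinary $S=\{\mathbf{0}\}\cup(\N^d\setminus C(\mathbf{f}))$ has $S^*=\N^d\setminus C(\mathbf{f})=\{\pmb{\alpha}\in\N^d: \alpha^{(i)}>f^{(i)} \text{ for some } i\}$, and since $\pmb{\alpha}\in S^*$ precisely when $x_i^{f^{(i)}+1}\mid x^{\pmb{\alpha}}$ for some $i$, the corresponding ideal is $I=\langle x_1^{f^{(1)}+1},\ldots,x_d^{f^{(d)}+1}\rangle$, a zero-dimensional complete intersection. One also checks quickly that such an $S$ really is a monomial semigroup: it is a submonoid (if $\pmb{\alpha},\pmb{\beta}\not\le\mathbf{f}$ then $\pmb{\alpha}+\pmb{\beta}\not\le\mathbf{f}$), it has finite complement $C(\mathbf{f})\setminus\{\mathbf{0}\}$, and no nonzero element of $S$ lies below a hole while $\mathbf{0}$ does, so $n(S)=1$. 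Conversely, a zero-dimensional monomial complete intersection is forced to have the form $I=\langle x_1^{a_1},\ldots,x_d^{a_d}\rangle$ (its generators must be pairwise coprime pure powers, as is already used in the proof of Proposition~\ref{prop:ci}); taking $\mathbf{f}=(a_1-1,\ldots,a_d-1)$ reverses the computation and exhibits $S$ as ordinary.

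The step requiring the most care — and the only genuine obstacle — is the degenerate case $\mathbf{f}=\mathbf{0}$. Here $C(\mathbf{f})=\{\mathbf{0}\}$ and the literal reading of Definition~\ref{def:ordinary} gives $S=\N^d$, which has $H(S)=\emptyset$ and hence $n(S)=0\ne 1$; correspondingly its ``ideal'' $\langle x_1,\ldots,x_d\rangle$ has $\ell(R/I)=1$ while $c(S)=0$, so Lemma~\ref{lem:TranslatingStatistics} tacitly assumes $H(S)\ne\emptyset$. I would therefore read the proposition for GNS with nonempty hole set (equivalently, require $\mathbf{f}\ne\mathbf{0}$ in the notion of ordinary), under which all three conditions force $H(S)\ne\emptyset$ and the dictionary above is an honest equivalence. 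Once this convention is fixed, the remaining verifications are the routine divisibility translations indicated above.
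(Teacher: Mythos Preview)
Your proof is correct and follows essentially the same approach as the paper, which simply cites Propositions~\ref{prop:MonomialTranslation} and~\ref{prop:ci} and declares the result immediate. You are more explicit than the paper in spelling out the $(1)\Leftrightarrow(2)$ correspondence (ordinary $S$ with Frobenius element $\mathbf{f}$ corresponds to $I=\langle x_1^{f^{(1)}+1},\ldots,x_d^{f^{(d)}+1}\rangle$), and your observation about the degenerate case $\mathbf{f}=\mathbf{0}$ is a valid caveat that the paper glosses over.
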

\begin{proof}
This is immediate from Propositions~\ref{prop:MonomialTranslation} and~\ref{prop:ci}.
\end{proof}

The following example illustrates the equality $dc(S)=e(S)$ satisfied by ordinary generalized numerical semigroups.

\begin{exa}\label{ex:OrdinaryGNS1} \rm Let $\textbf{f}=(2,3)\in \mathbb{N}^{2}$. Then $C((2,3))= \{(0,0), (0,1), (0,2), \\(0,3), (1,0), (1,1), (1,2), (1,3), (2,0), (2,1), (2,2), (2,3)\}$.  The ordinary semigroup $S=\left(\mathbb{N}^{2}\setminus C(\textbf{f})\right)\cup \{(0,0)\}$ corresponds to the monomial complete intersection ideal $I=\langle x^3,y^4\rangle$.  Visually, the minimal generators of $S$ breaks into two copies of $C((2,3))$, namely $(3,0)+C((2,3))$ and $(0,4)+C((2,3))$.  These are displayed as the red dots in Figure~\ref{fig:OrdinaryGNS}.  (Notice that these red dots correspond to the exponent vectors of monomials in $I$ but not in $I^2$, where $I=\langle x^3,y^4\rangle$.)  The holes of $S$ are marked in black (they are all the elements of $C((2,3))$ except for $(0,0)$), while all points which are red or not marked belong to $S$.

\begin{figure}[h!]
\begin{center}
\begin{tikzpicture} 
\usetikzlibrary{patterns}
\draw [help lines] (0,0) grid (7,8);
\draw [<->] (0,9) node [left] {$y$} -- (0,0)
-- (8,0) node [below] {$x$};
\foreach \i in {1,...,7}
\draw (\i,1mm) -- (\i,-1mm) node [below] {$\i$} 
(1mm,\i) -- (-1mm,\i) node [left] {$\i$}; 
\node [below left] at (0,0) {$O$};

\draw [mark=*] plot (0,1);
\draw [mark=*] plot (0,2);
\draw [mark=*] plot (0,3); 
\draw [mark=*] plot (1,0);
\draw [mark=*] plot (1,1);
\draw [mark=*] plot (1,2);
\draw [mark=*] plot (1,3);
\draw [mark=*] plot (2,0);
\draw [mark=*] plot (2,1);
\draw [mark=*] plot (2,2);
\draw [mark=*] plot (2,3);

\draw [red, mark=*] plot (3,0);
\draw [red, mark=*] plot (4,0);
\draw [red, mark=*] plot (5,0);
\draw [red, mark=*] plot (3,1);
\draw [red, mark=*] plot (4,1);
\draw [red, mark=*] plot (5,1);
\draw [red, mark=*] plot (3,2);
\draw [red, mark=*] plot (4,2);
\draw [red, mark=*] plot (5,2);
\draw [red, mark=*] plot (3,3);
\draw [red, mark=*] plot (4,3);
\draw [red, mark=*] plot (5,3);

\draw [red, mark=*] plot (0,4);
\draw [red, mark=*] plot (0,5);
\draw [red, mark=*] plot (0,6);
\draw [red, mark=*] plot (0,7);
\draw [red, mark=*] plot (1,4);
\draw [red, mark=*] plot (1,5);
\draw [red, mark=*] plot (1,6);
\draw [red, mark=*] plot (1,7);
\draw [red, mark=*] plot (2,4);
\draw [red, mark=*] plot (2,5);
\draw [red, mark=*] plot (2,6);
\draw [red, mark=*]plot (2,7);
\draw [fill=red, opacity=0.2] (0,0) rectangle (2,3);
\end{tikzpicture}
\end{center}
\caption{The ordinary GNS in Example~\ref{ex:OrdinaryGNS1}.}\label{fig:OrdinaryGNS}
\end{figure}
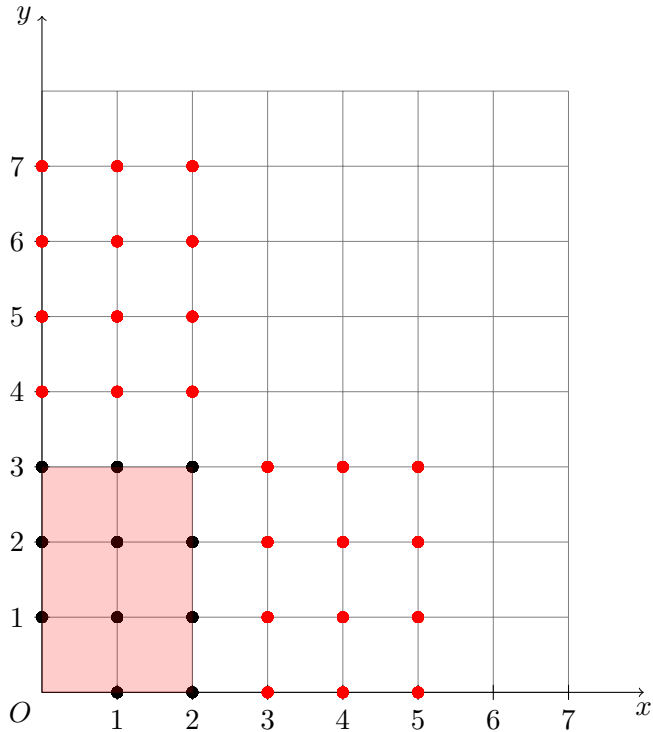
\end{exa}


\section{Comparison with a different extension of Wilf's conjecture}\label{sec:comparison}
In \cite{G.G.2016} another generalization of Wilf's conjecture is given. That generalization involves a larger class of affine semigroups, called $\mathcal{C}$-semigroups.  We will only consider the work of~\cite{G.G.2016} in the case of generalized numerical semigroups.  We will need some additional notation from~\cite{G.G.2016} (and also~\cite{Prof}).  Let $\prec$ be a monomial order satisfying that every monomial is preceded only by a finite number of monomials. The maximum of $H(S)$ with respect to $\prec$ is the Frobenius element of S, denoted by $Fb(S)$. By convention, $Fb(\mathbb{N}^{d})$ is the vector $(-1,\ldots, -1)$ with $d$ coordinates. Denote by $n_{\prec}(S)$ the cardinality of the finite set $\{\textbf{x}\in S\mid \textbf{x} \prec Fb(S)\}$. The Frobenius number of S is defined as $n_{\prec}(S) + g(S)$ and denoted by
$N(Fb(S))$.

\begin{conj}[Extended Wilf Conjecture, \cite{G.G.2016}, Conjecture 14] Let $S\subseteq \mathbb{N}^{d}$ be a GNS. Then $
n_{\prec}(S)e(S)\geq N(Fb(S)) + 1$, for every monomial order $\prec$ satisfying that every monomial is preceded only by a finite number of monomials. \label{Wilf-extend} \end{conj}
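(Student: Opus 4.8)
Since Conjecture~\ref{Wilf-extend} is required to hold for \emph{every} admissible monomial order, I would begin by isolating the order-dependence. Using the definition $N(Fb(S))=n_{\prec}(S)+g(S)$, the inequality $n_{\prec}(S)e(S)\ge N(Fb(S))+1$ is equivalent to $n_{\prec}(S)\big(e(S)-1\big)\ge g(S)+1$. The plan is to attempt to deduce this from the machinery built for \genwilf{}, by comparing the order-dependent count $n_{\prec}(S)=|\{\textbf{x}\in S\mid \textbf{x}\prec Fb(S)\}|$ with the order-independent statistics $n(S)$, $c(S)$, $g(S)$, and $m(S)$ of the present paper.

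Fixing an admissible order $\prec$, the element $Fb(S)$ is the $\prec$-largest hole. Because admissible orders refine the natural partial order on $\N^d$, every element of $N(S)$ lies $\prec$-below some hole and hence strictly $\prec$-below $Fb(S)$; thus $N(S)\subseteq\{\textbf{x}\in S\mid \textbf{x}\prec Fb(S)\}$ and $n_{\prec}(S)\ge n(S)$. Feeding in the lower bound $e(S)\ge 2d$ from~\cite[Theorem~11]{G.G.2016} together with the identity $c(S)=n(S)+g(S)$ and the multiplicity inequality $c(S)\le m(S)n(S)$ of Lemma~\ref{lem:MultiplicityInequality}, the most naive chain reduces the claim to $2d\,n(S)\ge m(S)n(S)+1$. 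This fails as soon as the multiplicity $m(S)$ grows past $2d$, which already signals that no purely order-independent bound will close the argument.

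The main obstacle is exactly this quantifier over all monomial orders. Unlike \genwilf{}, whose statistics are controlled by the natural partial order and therefore transform predictably under the thickening operations of Section~2, both $Fb(S)$ and $n_{\prec}(S)$ can move substantially as $\prec$ varies, and the thickening construction does not interact cleanly with an arbitrary total order. I therefore expect no uniform reduction of Conjecture~\ref{Wilf-extend} to \genwilf{}, and my plan would be twofold. First, I would verify the implication on the families already understood here---irreducible semigroups and monomial semigroups ($n(S)=1$)---where $Fb(S)$ and $n_{\prec}(S)$ can be computed directly for a given $\prec$. In parallel, I would search among low-dimensional generalized numerical semigroups for a pathological order that inflates $Fb(S)$, and hence $g(S)$, while keeping $n_{\prec}(S)\big(e(S)-1\big)$ small; exhibiting such an example would show that the two extensions are genuinely incomparable and would clarify why Conjecture~\ref{Wilf-extend} is recorded here for the purpose of comparison rather than established.
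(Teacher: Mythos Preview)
The statement you were handed is a \emph{conjecture}; the paper does not prove it and does not claim to.  What the paper does prove, in Proposition~\ref{implication}, is the implication you set out to investigate: if $S$ satisfies \genwilf{}, then $S$ satisfies Conjecture~\ref{Wilf-extend}.  Your write-up assembles every ingredient of that proof and then walks past the finish line.

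Concretely, you already have the reformulation $n_{\prec}(S)(e(S)-1)\ge g(S)+1$ and the comparison $n_{\prec}(S)\ge n(S)$.  The step you never take is to \emph{assume} \genwilf{} in the form $n(S)(e(S)-d)\ge d\,g(S)$.  For $d\ge 2$ (the case $d=1$ being identical to Wilf's original inequality) this gives
\[
n_{\prec}(S)(e(S)-1)\ \ge\ n(S)(e(S)-1)\ \ge\ n(S)(e(S)-d)\ \ge\ d\,g(S)\ \ge\ g(S)+1,
\]
which is exactly the paper's argument.  Your ``naive chain'' instead tried to replace $e(S)$ by the unconditional bound $2d$ and $g(S)$ by $m(S)n(S)-n(S)$; that amounts to attacking the Extended Wilf Conjecture \emph{unconditionally}, which is of course open, so it is no surprise that the chain collapses once $m(S)>2d$.

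Because of this, your closing conclusions are off.  There \emph{is} a uniform reduction of Conjecture~\ref{Wilf-extend} to \genwilf{}---that is precisely Proposition~\ref{implication}---so the two extensions are not ``genuinely incomparable'': \genwilf{} is the stronger statement.  The thickening machinery is irrelevant here; the whole comparison rests on the single observation $n(S)\le n_{\prec}(S)$, which you already made.
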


We would like to compare \genwilf{} and the Extended Wilf Conjecture~\ref{Wilf-extend}. First of all, we can remark that \genwilf{} does not depend on the choice of a monomial order. In order to provide a simple link between the two conjectures, we recall the following property, stated in \cite{Work1} in a more general case:

\begin{prop}[\cite{Work1}, Proposition 3.4] Every monomial order in $\mathbb{N}^{d}$ extends the natural partial order in $\mathbb{N}^{d}$.\label{relax}\end{prop}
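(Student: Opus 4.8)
The plan is to reduce the statement to the single fact that $\mathbf{0}$ is the $\prec$-minimal element of $\mathbb{N}^d$, and then to exploit the translation-invariance built into the definition of a monomial order. Recall that a monomial order $\prec$ is, by definition, a total order on $\mathbb{N}^d$ compatible with addition, i.e. $\mathbf{a}\prec \mathbf{b}$ implies $\mathbf{a}+\mathbf{c}\prec \mathbf{b}+\mathbf{c}$ for every $\mathbf{c}\in\mathbb{N}^d$. The extra hypothesis in force here, that each monomial has only finitely many $\prec$-predecessors, is used exactly once, to pin down this minimal element.

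First I would show that $\mathbf{0}\preceq \mathbf{a}$ for every $\mathbf{a}\in\mathbb{N}^d$. Suppose to the contrary that $\mathbf{a}\prec \mathbf{0}$ for some $\mathbf{a}\neq \mathbf{0}$. Adding $\mathbf{a}$ repeatedly and invoking compatibility with addition produces the strictly descending chain $\cdots \prec 3\mathbf{a}\prec 2\mathbf{a}\prec \mathbf{a}\prec \mathbf{0}$; by transitivity, the infinitely many distinct monomials $k\mathbf{a}$ with $k\geq 1$ all precede $\mathbf{0}$, contradicting the assumption that $\mathbf{0}$ has only finitely many predecessors. Hence $\mathbf{0}$ is the $\prec$-least element of $\mathbb{N}^d$.

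Next, given $\mathbf{x},\mathbf{y}\in\mathbb{N}^d$ with $\mathbf{x}\leq \mathbf{y}$ in the natural partial order, set $\mathbf{c}=\mathbf{y}-\mathbf{x}$. Since $\mathbf{x}\leq \mathbf{y}$ coordinatewise, we have $\mathbf{c}\in\mathbb{N}^d$, so the previous step gives $\mathbf{0}\preceq \mathbf{c}$. Adding $\mathbf{x}$ to both sides and again using compatibility with addition yields $\mathbf{x}=\mathbf{0}+\mathbf{x}\preceq \mathbf{c}+\mathbf{x}=\mathbf{y}$, which is precisely the claim that $\prec$ extends the natural partial order. (The boundary case $\mathbf{x}=\mathbf{y}$ is absorbed automatically, since then $\mathbf{c}=\mathbf{0}$ and reflexivity of $\preceq$ applies.)

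The argument is short, and the only genuinely delicate point is the first step, namely establishing that $\mathbf{0}$ is minimal; this is where the finite-predecessor hypothesis is indispensable, since without it a translation-invariant total order could in principle place some nonzero vector below $\mathbf{0}$. Everything afterward is a direct consequence of translation-invariance, so I would not expect any serious obstacle beyond making the minimality argument airtight.
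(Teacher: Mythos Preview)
Your argument is correct and is the standard proof of this fact. Note, however, that the paper does not actually supply a proof of this proposition: it is quoted from~\cite{Work1} (Proposition~3.4 there) and stated without proof, so there is no in-paper argument to compare against. Your two-step approach---first using the finite-predecessor hypothesis together with translation-invariance to force $\mathbf{0}$ to be $\prec$-minimal, then adding $\mathbf{x}$ to $\mathbf{0}\preceq \mathbf{y}-\mathbf{x}$---is exactly how this is typically done, and your remark that the finite-predecessor condition is what rules out pathological translation-invariant total orders is on point.
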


\begin{prop} If $S\subseteq \mathbb{N}^{d}$ is a GNS that satisfies \genwilf{}~ then $S$ satisfies the Extended Wilf Conjecture~\ref{Wilf-extend}.\label{implication} \end{prop}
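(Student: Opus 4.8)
The plan is to translate the hypothesis of \genwilf{} (the inequality $e(S)n(S)\ge d\,c(S)$), which is phrased through the partial-order statistics $n(S)$ and $c(S)$, into the monomial-order statistics $n_{\prec}(S)$ and $N(Fb(S))$ that appear in the Extended Wilf Conjecture~\ref{Wilf-extend}. The only link between the two orders that I need is Proposition~\ref{relax}, which says that every admissible monomial order $\prec$ refines the natural partial order on $\mathbb{N}^d$. Since $Fb(S)=\max_{\prec}H(S)$, every hole $\textbf{h}\in H(S)$ automatically satisfies $\textbf{h}\preceq Fb(S)$.

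First I would compare the two ``small element'' counts and establish $n(S)\le n_{\prec}(S)$. Indeed, take $\textbf{s}\in N(S)$, so $\textbf{s}\in S$ and $\textbf{s}\le \textbf{h}$ for some $\textbf{h}\in H(S)$. Because $\textbf{s}\in S$ while $\textbf{h}\notin S$, the inequality is strict, $\textbf{s}<\textbf{h}$, and Proposition~\ref{relax} then yields $\textbf{s}\prec \textbf{h}\preceq Fb(S)$, hence $\textbf{s}\prec Fb(S)$. Thus $N(S)\subseteq\{\textbf{x}\in S\mid \textbf{x}\prec Fb(S)\}$ and $n(S)\le n_{\prec}(S)$. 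Next I would record two bookkeeping identities that are already available: $c(S)=|C(S)|=|H(S)|+|N(S)|=g(S)+n(S)$, and, by counting the elements of $\mathbb{N}^d$ that precede $Fb(S)$, the relation $N(Fb(S))+1=n_{\prec}(S)+g(S)$ (the elements $\preceq Fb(S)$ decompose into the $n_{\prec}(S)$ elements of $S$ lying below $Fb(S)$ together with all $g(S)$ holes).

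With these in place the deduction is short. Since $d\ge 1$, the hypothesis of \genwilf{} gives $c(S)\le d\,c(S)\le e(S)n(S)$. Therefore
\begin{align*}
N(Fb(S))+1 &= n_{\prec}(S)+g(S)=\bigl(n_{\prec}(S)-n(S)\bigr)+\bigl(n(S)+g(S)\bigr)\\
&=\bigl(n_{\prec}(S)-n(S)\bigr)+c(S)\le \bigl(n_{\prec}(S)-n(S)\bigr)+e(S)n(S).
\end{align*}
It then remains to verify $\bigl(n_{\prec}(S)-n(S)\bigr)+e(S)n(S)\le e(S)n_{\prec}(S)$, which rearranges to $(e(S)-1)\bigl(n_{\prec}(S)-n(S)\bigr)\ge 0$; this holds because $e(S)\ge 1$ and $n_{\prec}(S)\ge n(S)$. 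Chaining the two displays gives $N(Fb(S))+1\le e(S)n_{\prec}(S)$, which is exactly Conjecture~\ref{Wilf-extend}.

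I expect the only genuinely nonroutine step to be the comparison $n(S)\le n_{\prec}(S)$, which is precisely where Proposition~\ref{relax} is indispensable; once the statistics are aligned, the rest is elementary arithmetic. It is worth emphasizing that the factor $d$ is consumed only through the bound $d\ge 1$, so the argument in fact shows that \genwilf{} is the stronger of the two conjectures: the slack in the implication is at least $(d-1)\,c(S)+(e(S)-1)\bigl(n_{\prec}(S)-n(S)\bigr)$, which is typically strictly positive.
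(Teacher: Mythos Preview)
Your overall strategy---establishing $n(S)\le n_{\prec}(S)$ via Proposition~\ref{relax} and then doing arithmetic with the identity $c(S)=n(S)+g(S)$---is exactly the paper's. However, there is an off-by-one slip in your second bookkeeping identity. The paper \emph{defines} the Frobenius number by $N(Fb(S))=n_{\prec}(S)+g(S)$ (this is stated verbatim just before Conjecture~\ref{Wilf-extend}); it is not derived from a count of predecessors. Thus the quantity to bound above is $N(Fb(S))+1=n_{\prec}(S)+g(S)+1$, not $n_{\prec}(S)+g(S)$. With that correction, your displayed chain only yields $e(S)\,n_{\prec}(S)\ge N(Fb(S))$, one short of the target inequality.

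The missing $+1$ is precisely where the paper spends the factor $d$ that you threw away via the crude bound $d\ge 1$. Rewriting \genwilf{} as $(e(S)-d)\,n(S)\ge d\,g(S)$, the paper argues (for $d\ge 2$) that
\[
(e(S)-1)\,n_{\prec}(S)\ \ge\ (e(S)-1)\,n(S)\ \ge\ (e(S)-d)\,n(S)\ \ge\ d\,g(S)\ \ge\ g(S)+1,
\]
the last step using $(d-1)\,g(S)\ge 1$. Adding $n_{\prec}(S)$ to both sides gives $e(S)\,n_{\prec}(S)\ge n_{\prec}(S)+g(S)+1=N(Fb(S))+1$. The case $d=1$ is handled separately at the outset. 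So your argument is easily repaired, but you must invoke $d\ge 2$ (and treat $d=1$ on its own) rather than merely $d\ge 1$; your remark that ``the factor $d$ is consumed only through the bound $d\ge 1$'' is exactly what causes the shortfall.
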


\begin{proof}
If $d=1$ it is clear that the two inequalities are the same, so we suppose that $d>1$ and assume that $S$ satisfies \genwilf{}. Fix a monomial order $\prec$ in $\mathbb{N}^{d}$. Let $\textbf{s}\in N(S)$, then $\textbf{s}\leq \textbf{h}$ for some $\textbf{h}\in H(S)$, with respect to the natural partial order in $\mathbb{N}^{d}$. By Proposition~\ref{relax}, $\textbf{s}\prec \textbf{h}\prec Fb(S)$, so $\textbf{s}\in \{\textbf{x}\in S\mid \textbf{x} \prec Fb(S)\}$. Therefore $n(S)\leq n_{\prec}(S)$. Consider \genwilf{} in the form $n(S)(e(S)-d)\geq dg(S)$.
Hence $n_{\prec}(S)(e(S)-1)\geq n(S)(e(S)-1)\geq n(S)(e(S)-d)\geq dg(S)\geq g(S)+1$, in particular $n_{\prec}(S)e(S)\geq n_{\prec}(S)+g(S)+1=N(Fb(S))+1$. 
\end{proof}

%

In \cite{G.G.2016} other classes of generalized numerical semigroups are given for which the Extended Wilf Conjecture \ref{Wilf-extend} is satisfied. Now we study the behaviour of those classes with respect to \genwilf{}. The first class (\cite{G.G.2016}, Lemma 15) provides another example, different from ordinary GNS, in which \genwilf{} holds as an equality:

\vspace{6pt}


\begin{prop} Let $h>1$ be a positive integer, $i\in\{1,2,\ldots,d\}$, $k\in \{1,2,\ldots,d\}\setminus \{i\}$. Consider the GNS $S\subseteq \mathbb{N}^{d}$ generated by:
\begin{multline*}
\{\textbf{e}_{1},\textbf{e}_{2},\ldots,\textbf{e}_{i-1},\textbf{e}_{i+1},\ldots,\textbf{e}_{d},2\textbf{e}_{i},3\textbf{e}_{i}\}\\
\cup \{\textbf{e}_{i}+h\textbf{e}_{k}\}\cup \{\textbf{e}_{i}+\textbf{e}_{j}\mid j\in\{1,2,\ldots,d\}\setminus\{k,i\}\}
\end{multline*}
Then $S$ satisfies \genwilf{}. \end{prop}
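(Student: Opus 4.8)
The plan is to recognize $S$ as a member of the family appearing in Lemma~\ref{e=2d} and then invoke the machinery already developed for symmetric semigroups. Indeed, the generating set of $S$ is exactly the set $A$ from Lemma~\ref{e=2d} with the choices $a=2$, $b=3$, $h^{(k)}=h$, and $h^{(j)}=1$ for $j\in\{1,\dots,d\}\setminus\{i,k\}$. Thus $S$ falls into Case 1 ($a=2$) of the proof of that lemma, which both exhibits the hole set explicitly and shows that $S$ is symmetric; alternatively one may verify symmetry directly via Theorem~\ref{rel}(2).

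First I would record the invariants coming from this identification. Since $\langle 2,3\rangle=\mathbb{N}\setminus\{1\}$, the hole description from Lemma~\ref{e=2d} specializes to
\[
H(S)=\{\textbf{e}_i+l\,\textbf{e}_k\mid 0\le l\le h-1\},
\]
so that $g(S)=h$ and the Frobenius element is $\textbf{f}=\textbf{e}_i+(h-1)\textbf{e}_k$. This can be checked directly: an element whose $i$-th coordinate is $0$ or $\ge 2$ always lies in $S$, while an element with $i$-th coordinate $1$ lies in $S$ precisely when it can absorb a generator $\textbf{e}_i+\textbf{e}_j$ with $j\ne i,k$, or the generator $\textbf{e}_i+h\textbf{e}_k$, which fails exactly for the listed holes. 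Because $S$ is Frobenius we get $c(S)=|C(\textbf{f})|=(f^{(i)}+1)(f^{(k)}+1)=2h$, and because $S$ is symmetric the argument in the proof of Proposition~\ref{w-sym} (via the injection $\Psi_{\textbf{f}}$ together with $2g(S)=c(S)=n(S)+g(S)$) gives $n(S)=g(S)=h$.

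It then remains to check minimality of the listed generators, which yields $e(S)=2d$: the axis generators $\textbf{e}_j$ ($j\ne i$) and the elements $2\textbf{e}_i,3\textbf{e}_i$ are clearly indecomposable, and matching the $i$-th coordinate shows that any combination producing $\textbf{e}_i+h\textbf{e}_k$ or $\textbf{e}_i+\textbf{e}_j$ must use precisely that generator. Combining the three computations gives $e(S)\,n(S)=2d\cdot h=d\cdot 2h=d\,c(S)$, so $S$ satisfies \genwilf{} with equality. One may instead bypass the equality count entirely and simply apply Proposition~\ref{w-sym} to the symmetric semigroup $S$ to obtain the inequality.

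The main difficulty here is bookkeeping rather than conceptual. The delicate points are confirming that the given generating list coincides verbatim with the family of Lemma~\ref{e=2d}, so that its hole analysis transfers unchanged, and confirming that no generator is redundant, since the claimed equality relies on $e(S)$ being \emph{exactly} $2d$ rather than merely the bound $e(S)\ge 2d$ used in Proposition~\ref{w-sym}.
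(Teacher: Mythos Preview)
Your proof is correct and follows essentially the same approach as the paper: both compute $e(S)=2d$, identify $H(S)=\{\textbf{e}_i+l\,\textbf{e}_k\mid 0\le l\le h-1\}$ with Frobenius element $\textbf{f}=\textbf{e}_i+(h-1)\textbf{e}_k$, obtain $c(S)=2h$ and $n(S)=h$, and conclude with the equality $e(S)n(S)=2dh=d\,c(S)$. The only cosmetic difference is that the paper reads off $N(S)=\{\textbf{0},\textbf{e}_k,\ldots,(h-1)\textbf{e}_k\}$ directly, whereas you deduce $n(S)=g(S)$ from symmetry via the identification with Lemma~\ref{e=2d}; the paper instead records the thickening description $S=\kthick{0}(\kthick{h-1}(\langle 2,3\rangle,k),\{1,\ldots,d\}\setminus\{i,k\})$ in a remark after the proof as an alternative route.
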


\begin{proof}
First we have $e(S)=2d$. The set of holes of $S$ is $H(S)=\{\textbf{e}_{i},\textbf{e}_{i}+\textbf{e}_{k},\textbf{e}_{i}+2\textbf{e}_{k},\ldots,\textbf{e}_{i}+(h-1)\textbf{e}_{k}\}$, so $S$ is a Frobenius GNS (\cite{Work1}) with Frobenius element $\textbf{f}=\textbf{e}_{i}+(h-1)\textbf{e}_{k}$. Therefore $c(S)=|C(\textbf{f})|=2h$. Furthermore $\bigcup_{\textbf{h}\in H(S)}N(\textbf{h})=\{\textbf{0},\textbf{e}_{k},2\textbf{e}_{k},\ldots,(h-1)\textbf{e}_{k}$, so $n(S)=h$. Finally $dc(S)=2dh=n(S)e(S)$.

\end{proof}

\begin{rem}
Observe that the GNS in the previous proposition can be expressed using thickenings. In particular if we consider the numerical semigroup $\langle2,3\rangle$ in the $i$-th axis and the GNS $\widehat{S}=\kthick{h-1}(\langle 2,3\rangle ,k)$, then $S=\kthick{0}(\widehat{S},\{1,\ldots,d\}\setminus\{i,k\})$. So $S$ satisfies the Generalized Wilf Conjecture also by Proposition~\ref{prop:ThickeningAndWilf} and the fact that $\langle 2,3\rangle$ satisfies Wilf's conjecture.

\end{rem}

\noindent 

The second class contains semigroups $S=\mathbb{N}^{d}\setminus \{\textbf{e}_{i},2\textbf{e}_{i},\ldots,(q-1)\textbf{e}_{i}\}$, with $i\in\{1,2,\ldots,d\}$ and $k\in \mathbb{N}\setminus\{0\}$ (\cite{G.G.2016}, Lemma 16). In this case $S=(\mathbb{N}^{d}\setminus C((q-1)\textbf{e}_{i})$ and $S$ satisfies \genwilf{} by Theorem~\ref{thm:MonomialWilf} or Proposition~\ref{prop:ThickeningAndWilf}.\\

\noindent For the third class (\cite{G.G.2016}, Lemma 17) we prove the following more general result

\begin{prop} Let $Q\subseteq \mathbb{N}$ be a numerical semigroup satisfying Wilf's conjecture, $j\in\{1,2,\ldots,d\}$ and a set $\{q_{i}\in \mathbb{N}\mid i\in \{1,2,\ldots,d\}\setminus \{j\}\}$. Then $S=\mathbb{N}^{d}\setminus \{(x_{1},\ldots,x_{d})\in \mathbb{N}^{d}\mid x_{j}\notin Q, x_{i}\leq q_{i}, i\in\{1,2,\ldots,d\}\setminus\{j\}\}$ is a GNS and it satisfies \genwilf{}.\end{prop}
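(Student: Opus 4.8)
The plan is to recognize $S$ as an iterated thickening of the numerical semigroup $Q$ and then apply Proposition~\ref{prop:ThickeningAndWilf} repeatedly, exactly as in the remark following Definition~\ref{def:IterateThick}. Since Wilf's conjecture for $Q$ is precisely the assertion $1\cdot c(Q)\le n(Q)e(Q)$, i.e. \genwilf{} in dimension one, this reduces the problem to a purely combinatorial identification of $S$ as a thickening and requires no fresh length or generator computation.

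First I would check directly that $S$ is a GNS. Its hole set is $H(S)=\{\textbf{x}\in\mathbb{N}^d\mid x_j\in H(Q)\text{ and } x_i\le q_i\text{ for all }i\ne j\}$, which is finite because $H(Q)$ is finite and the remaining coordinates are bounded by the $q_i$. Because $0\in Q$, the origin lies in $S$. For closure, suppose $\textbf{a},\textbf{b}\in S$ but $\textbf{a}+\textbf{b}\in H(S)$; then $a_j+b_j\notin Q$ and $a_i+b_i\le q_i$ for every $i\ne j$. Since $Q$ is a submonoid of $\mathbb{N}$, $a_j+b_j\notin Q$ forces $a_j\notin Q$ or $b_j\notin Q$, while $a_i+b_i\le q_i$ forces both $a_i\le q_i$ and $b_i\le q_i$; hence $\textbf{a}$ or $\textbf{b}$ would itself be a hole, a contradiction.

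Next I would exhibit $S$ as a thickening. Embedding $Q$ along the $j$-th axis (so $Q\subset\mathbb{N}^1$) and writing $\{1,\ldots,d\}\setminus\{j\}=\{i_1,\ldots,i_{d-1}\}$, I claim $S=\kthick{q_{i_1},\ldots,q_{i_{d-1}}}(Q,i_1,\ldots,i_{d-1})$. I would prove this by induction on the number of thickenings: a single thickening $\kthick{q_{i_1}}(Q,i_1)$ has holes exactly the points whose $i_1$-coordinate is at most $q_{i_1}$ and whose $j$-coordinate lies in $H(Q)$, and each subsequent thickening along a fresh axis $i_\ell$ copies the current hole set across the layers $0,\ldots,q_{i_\ell}$ and introduces nothing beyond layer $q_{i_\ell}$. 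Carrying this through yields precisely the box-times-$H(Q)$ description of $H(S)$ above; since the order of thickenings is immaterial (remark after Definition~\ref{def:IterateThick}), the construction is well defined.

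Finally, as $Q$ satisfies Wilf's conjecture it satisfies \genwilf{} for $d=1$, so repeated application of Proposition~\ref{prop:ThickeningAndWilf} shows that $S$ satisfies \genwilf{}. The main obstacle is the bookkeeping in the inductive identification of $S$ as the iterated thickening; the delicate point is verifying that thickening along a new axis $i_\ell$ neither produces a hole with $i_\ell$-coordinate exceeding $q_{i_\ell}$ nor disturbs the existing box structure in the other coordinates. Once that combinatorial identification is clean, the inequality $d\,c(S)\le n(S)e(S)$ is a formal consequence of the thickening machinery. As a cross-check one could instead compute the statistics directly: $S$ is Frobenius with Frobenius element $\textbf{f}$ whose $j$-coordinate is the Frobenius number $F(Q)$ and whose other coordinates are the $q_i$, giving $c(S)=(F(Q)+1)\prod_{i\ne j}(q_i+1)$ and $n(S)=n(Q)\prod_{i\ne j}(q_i+1)$.
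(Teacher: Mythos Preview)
Your proposal is correct and follows exactly the paper's own approach: identify $S$ as the iterated thickening $\kthick{q_{i_1},\ldots,q_{i_{d-1}}}(Q,i_1,\ldots,i_{d-1})$ of $Q$ along the remaining axes and then apply Proposition~\ref{prop:ThickeningAndWilf}. You supply more detail than the paper (the explicit GNS verification and the direct statistics cross-check), but the argument is the same.
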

\begin{proof}
Consider the numerical semigroup $Q$ on the axis $j$. Observe that $S$ is obtained by the following sequence of thickenings:
 $$S=\kthick{q_1,q_2,\ldots,\widehat{q_{j}},\ldots, q_d}(Q,\{1,\ldots,d\}\setminus \{j\}).$$ Then $S$ satisfies \genwilf{} by Proposition~\ref{prop:ThickeningAndWilf}.

\end{proof}

\section{Some computational tests}

The GAP\cite{GAP4} package \texttt{numericalsgps}\cite{GAP} offers tools to deal with numerical and affine semigroups. In \cite{algorithms}, in particular, some procedures for generalized numerical semigroups are described and these algorithms are implemented in the development version site of the package. Such tools allow to compute all generalized numerical semigroups of a given genus and to test \genwilf{} for them. Using this technique we verified that \genwilf{} is satisfied by all generalized numerical semigroups in $\mathbb{N}^{2}$ up to genus $g=13$, and in $\mathbb{N}^{3}$ up to genus $g=10$. Moreover the function \texttt{RandomAffineSemigroupWithGenusAndDimension}
allows to produce a random GNS in $\mathbb{N}^{d}$ of genus $g$, so it is possible to make a random test of \genwilf{}. Considering a random GNS of genus $g$, from $g=1$ up to $g=500$ we checked that different random tests give a positive answer for \genwilf{} in $\mathbb{N}^{d}$ from $d=2$ to $d=5$. We summarize the computational positive answers to \genwilf{} in the following table:

\vspace{6pt}
\begin{center}
\begin{tabular}{llclc}
\toprule
 & & genus & & Test \\
\midrule

$\mathbb{N}^{2}$ & & 1 to 13 & & All semigroups \\
  & & 1 to 500 & &  Random test\\
  
\midrule

$\mathbb{N}^{3}$ & & 1 to 10 & & All semigroups \\
  & & 1 to 500 & &  Random test\\
  
\midrule

$\mathbb{N}^{4}$ & & 1 to 500 & &  Random test\\

\midrule

$\mathbb{N}^{5}$ & & 1 to 500 & &  Random test\\

\bottomrule
\end{tabular}
\end{center}
\vspace{6pt}

\noindent Considering the number of such semigroups (see \cite{algorithms} and \cite{G.G.2016}) the previous test confirms a positive answer to \genwilf{} for a wide number of generalized numerical semigroups.

\section{Concluding Remarks}
If $S\subset\N^d$ is a GNS, it is natural to ask for a measure of the size of $n(S)e(S)-dc(S)$, which \genwilf{} postulates is non-negative.  Such a measure could expose additional terms either improving the inequality $n(S)e(S)\ge dc(S)$ or indicating where one could look for a counterexample.  We briefly consider this question for the case where $n(S)=1$, so $S^*$ is the set of exponent vectors of monomials inside a zero-dimensional monomial ideal $I\subset k[x_1,\ldots,x_d]$.  Let $a_1,\ldots,a_d$ be the smallest integers such that $x_i^{a_i}\in I$, and put $J=\langle x_1^{a_1},\ldots,x_d^{a_d}\rangle$.  Suppose that $I^2=JI$.  In the language of integral closures, this means that $J$ is a reduction of $I$ and the reduction number of $I$ with respect to $J$ is one (see~\cite[Chapter~8]{SH06}).  This is a very special situation - in general the ideal $J$ need not even be a reduction of $I$, let alone with reduction number one.  See for instance the recent preprint~\cite{HMRS19} which bounds the reduction number of monomial ideals in two variables which have $J=\langle x^a,y^b\rangle$ as a minimal reduction.

\begin{prop}\label{prop:ij}
Suppose $I\subset R=k[x_1,\ldots,x_d]$ is a zero-dimensional monomial ideal, $J= \langle x_1^{a_1},\ldots,x_d^{a_d}\rangle$ where $a_1,\ldots,a_d$ are the smallest integers such that $x_i^{a_i}\in I$, and $I^2=IJ$.  Then $\ell(I/I^2)-d\ell(R/I)=\prod_{i}a_i-\ell(R/I)=\ell(R/J)-\ell(R/I)$.
\end{prop}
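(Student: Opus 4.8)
The second equality is immediate: since $J=\langle x_1^{a_1},\ldots,x_d^{a_d}\rangle$ is a monomial complete intersection, the monomials outside $J$ are exactly the $x^{\pmb{\alpha}}$ with $\alpha^{(i)}<a_i$ for every $i$, so $\ell(R/J)=\prod_i a_i$. Thus the real content is the first equality, which I would rephrase as $\ell(I/I^2)=d\,\ell(R/I)+\ell(R/J)-\ell(R/I)$.

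The plan is to compute $\ell(I/I^2)$ homologically. By the defining property of the exponents $a_i$ we have $x_i^{a_i}\in I$ for each $i$, hence $J\subseteq I$ and in particular $I+J=I$. Because $I^2=IJ$ by hypothesis, $I/I^2=I/IJ=I\otimes_R R/J$, so I would tensor the short exact sequence $0\to I\to R\to R/I\to 0$ with $R/J$. Since $R$ is free, $\mathrm{Tor}_1^R(R,R/J)=0$, and the long exact $\mathrm{Tor}$ sequence collapses to
\[
0\to \mathrm{Tor}_1^R(R/I,R/J)\to I\otimes_R R/J\to R/J\to R/(I+J)\to 0,
\]
where I have used $R\otimes_R R/J=R/J$ and $R/I\otimes_R R/J=R/(I+J)=R/I$. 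Taking the alternating sum of lengths (every term has finite length) yields
\[
\ell(I/I^2)=\ell\bigl(\mathrm{Tor}_1^R(R/I,R/J)\bigr)+\ell(R/J)-\ell(R/I),
\]
so the whole problem reduces to showing $\ell(\mathrm{Tor}_1^R(R/I,R/J))=d\,\ell(R/I)$.

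The crux is this $\mathrm{Tor}$ computation, and it is exactly where the complete intersection structure of $J$ enters. Since $x_1^{a_1},\ldots,x_d^{a_d}$ is a regular sequence, the Koszul complex $K_\bullet$ on these elements is a free resolution of $R/J$, so $\mathrm{Tor}_\bullet^R(R/I,R/J)=H_\bullet(K_\bullet\otimes_R R/I)$. The key observation is that every Koszul differential carries a factor $x_i^{a_i}$, and each such factor acts as $0$ on $R/I$ because $x_i^{a_i}\in I$. Hence all the differentials of $K_\bullet\otimes_R R/I$ vanish and $\mathrm{Tor}_p^R(R/I,R/J)\cong (R/I)^{\binom{d}{p}}$; for $p=1$ this gives $\ell(\mathrm{Tor}_1^R(R/I,R/J))=d\,\ell(R/I)$, finishing the count.

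I expect the vanishing-of-differentials step to be the main obstacle to state cleanly, since it is what forces the factor of $d$ and it relies on both that $J$ is generated by a regular sequence and that its generators lie in $I$. If one prefers to avoid $\mathrm{Tor}$, an equivalent route is to split the exact sequence $0\to J/IJ\to I/IJ\to I/J\to 0$ (valid because $IJ\subseteq J\subseteq I$), combine it with $\ell(I/J)=\ell(R/J)-\ell(R/I)$, and identify $J/IJ\cong (R/I)^d$ using that the conormal module $J/J^2$ of the complete intersection $J$ is free of rank $d$ over $R/J$; this again isolates the factor $d$ as the rank of the conormal module, and the argument that $J/IJ$ has length $d\,\ell(R/I)$ remains the essential point.
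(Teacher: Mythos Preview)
Your argument is correct and takes a genuinely different route from the paper. The paper gives a direct combinatorial count: it exhibits the set $X=\{x_i^{a_i}m:m\notin I,\ 1\le i\le d\}$, shows (using $I^2=IJ$) that these $d\,\ell(R/I)$ monomials are distinct and lie in $I\setminus I^2$, and then argues that the remaining monomials of $I\setminus I^2$ are precisely those in the box $\prod_i[0,a_i-1]$ which happen to lie in $I$, of which there are $\ell(R/J)-\ell(R/I)$. Your approach replaces this explicit partition by the identification $I/I^2=I\otimes_R R/J$ together with the Koszul computation $\mathrm{Tor}_1^R(R/I,R/J)\cong(R/I)^d$, which isolates the same two pieces homologically. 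The paper's proof is more elementary and yields an explicit decomposition of the monomial basis of $I/I^2$; your proof is cleaner conceptually and, notably, never uses that $I$ is monomial---it goes through verbatim for any zero-dimensional ideal $I$ in $R$ with $J\subseteq I$ generated by a length-$d$ regular sequence and $I^2=IJ$, which is a pleasant bonus.
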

\begin{proof}
Consider the set $X=\{x_i^{a_i}\cdot m: m\notin I\}$.  We first show that $X\subset I\setminus I^2$.  Suppose for a contradiction that $x_i^{a_i}m\in I^2$ where $m\notin I$.  Since $x_i^{a_i}m\in I^2$ and $I^2=IJ$, it follows that $x_i^{a_i}m=x_j^{a_j}n$ for some $1\le j\le d$ and $n\in I$.  As $m\notin I$, $x_j^{a_j}\nmid m$.  Thus the only way the equation can be satisfied is if $j=i$ and $m=n$, a contradiction since $m\notin I$.  So $X\subset I\setminus I^2$.  We can just as easily see that monomials of the form $x_i^{a_i}m$, $x_j^{a_j}n$ where $m,n\notin I$ must be distinct.  Hence $|X|=d\ell(R/I)$.  Now suppose $n$ is a monomial satisfying $n\notin X$ and $n\in I\setminus I^2$.  Then $n=\prod x_i^{b_i}$ where $0\le b_i\le a_i-1$.  It follows that the exponent vector of $n$ is in the box $B=[0,a_1-1]\times[0,a_2-1]\times\cdots\times[0,a_d-1]$.  By our assumption that $I^2=IJ$, \textit{every} monomial whose exponent vector is in $B$ is \textit{not} in $I^2$.  Notice also that any monomial not in $I$ has an exponent vector which is also in $B$.  Hence the number of monomials which are in $I\setminus I^2$ but not in $X$ is exactly $\prod_{i} a_i-\ell(R/I)$.  This completes the proof.
\end{proof}

\begin{exa}
Suppose $I=\langle x^5,x^3y^3,y^5\rangle\subset R=k[x,y]$ and $J=\langle x^5,y^5\rangle$.  Then $I^2=IJ$, so this is an example of the situation in Proposition~\ref{prop:ij}.  Here $\ell(I/I^2)=46$, $\ell(R/I)=21$, and $\ell(R/J)=25$.  We can check that $46=2\cdot 21+(25-21)$.  In contrast, $L=\langle x^5,xy^4,y^5\rangle$ does not satisfy $L^2=LJ$; the reduction number of $L$ with respect to $J$ is $4$ so we only have equality in the equation $L^{k+1}=JL^k$ when $k\ge 4$.
\end{exa}

Proposition~\ref{prop:ij} raises the question of whether \genwilf{} is just the largest term of an inequality incorporating other invariants of the semigroup $S$ and its integral closure.

\end{document}